\newtheorem{theorem}{Theorem}[section]
\newtheorem{lemma}[theorem]{Lemma}
\newtheorem{corollary}[theorem]{Corollary}
\theoremstyle{definition}
\newtheorem{definition}[theorem]{Definition}
\newtheorem{example}[theorem]{Example}
\newtheorem{remark}[theorem]{Remark}
\renewcommand{\Re}{\operatorname{Re}} 
\renewcommand{\Im}{\operatorname{Im}}
\DeclareMathOperator{\diam}{diam}
\title{Intersections of Cantor sets derived from Complex Radix Expansions} 
\author{Neil MacVicar}
\begin{document} 
  
\begin{abstract} 
Let $C$ be the  attractor of the IFS $\{f_{d}(z) = (-n+i)^{-1}(z+d): d\in D\}$, $D\subset\{0, 1, \ldots, n^{2}\}$ and let $\dim$ denote the box-counting dimension. It is known that for all $\lambda\in[0, 1]$, that the set of complex numbers $\alpha$ for which $\dim(C\cap(C+\alpha)) = \lambda\dim(C)$ is dense in the set of $\alpha$ for which $C \cap (C + \alpha) \neq \emptyset$ when $d \leq n^{2}/2$ for all $d\in D$ and $|\delta - \delta^{'}| > n$ for all $\delta \neq \delta^{'} \in D - D$. We show that this result still holds when we replace $|\delta - \delta^{'}| > n$ with $|\delta - \delta^{'}| > 1$. In fact, for sufficiently large $n$, the result even holds when we remove the assumption $d\leq n^{2}/2$ and replace $|\delta - \delta^{'}| > n$ by $|\delta - \delta^{'}| > 2$. Additionally, we make similar statements where $\dim$ denotes the Hausdorff dimension or packing dimension. Our insights also find application in classifying the self-similarity of $C\cap(C+\alpha)$. Namely we connect the occurrence of self-similarity to the notion of strongly eventually periodic sequences seen for analogous objects on the real line. We also provide a new proof of a result of W. Gilbert that inspired this work. 
\end{abstract}

\maketitle


\section{\textbf{Introduction}} \label{section:into} 

Given a metric space $(X, d)$, a function $f:X \rightarrow X$ is a \textit{contraction} if there exists a real number $c\in(0, 1)$ such that 
\begin{equation}\label{eq:contract}
d(f(x), f(y)) \leq cd(x, y)
\end{equation}
for all $x, y\in X$. We call $c$ the \textit{contraction factor}. Given a finite collection of contractions $\{f_{1}, f_{2}, \ldots, f_{N}\}$ on a complete metric space $X$, called an \textit{iterated function system} (IFS), there exists a unique compact set $A \subset X$ that satisfies $\cup_{i=1}^{N}f_{i}(A) = A$ \cite{H81}. The set $A$ is called the \textit{attractor} of the IFS. A contraction is called a \textit{similarity} if we have equality in (\ref{eq:contract}). The attractor of an IFS is called \textit{self-similar} when all the contractions are similarities. Many examples of self-similar sets are fractals. That is, they exhibit non-integer Hausdorff dimension. For example, the middle third Cantor set $C_{3}$ is the attractor of the IFS $\{x \mapsto x/3, x \mapsto x/3+2/3\}$ and has Hausdorff dimension $\log{2}/\log{3}$. 

Consider the following property of the middle third Cantor set proved by Davis and Hu in \cite{DH95}. Let $\dim_{H}X$ denote Hausdorff dimension of a set $X\subset\mathbb{R}^{m}$. See Definition~\ref{def:hausDim} to recall how Hausdorff dimension is formulated. Let $F$ be the set of real numbers $\alpha$ for which $C_{3}\cap(C_{3}+\alpha)$ is nonempty.  The level sets of the function from $F$ to $[0, \log{2}/\log{3}]$ given by $\alpha \mapsto \dim_{H}(C_{3}\cap(C_{3}+\alpha))$ is dense in $F$. Pedersen and Shaw proved a result similar to that of Davis and Hu for the attractor of a set of similarities defined on the complex plane using the box-counting dimension instead of the Hausdorff dimension \cite{PS21}. To describe the IFS, fix a positive integer $n\geq2$, let $b := -n+i$ and let $D\subset\{0, 1, \ldots, n^{2}\}$. The IFS studied by Pedersen and Shaw in \cite{PS21} is the collection of functions $f_{d}:\mathbb{C}\rightarrow\mathbb{C}$ given by $f_{d}(z) = b^{-1}(z + d)$ where $d\in D$.

The choices of factor $-n+i$ and digits $\{0, 1, \ldots, n^{2}\}$ are special. Its primary feature is captured by the following theorem of Katai and Szabo, \cite{KS74}). 

\begin{theorem}[I. Katai, J. Szabo, \cite{KS74}, theorem 1] 
Given a Gaussian integer $b$, every Gaussian integer $g$ can be uniquely written as
\begin{equation}\label{eq:integerRep} 
g = \lambda_{0} + \lambda_{1}b + \ldots + \lambda_{k}b^{k}. 
\end{equation}
with $\lambda_{j} \in \{0, 1, 2, \ldots, |b|^{2}-1\}$ if and only if $\Re(b) < 0$ and $\Im(b) = \pm 1$. 
\end{theorem}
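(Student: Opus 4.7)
The plan is to establish both directions of the equivalence. For sufficiency, assume $b = -n + \epsilon i$ with integer $n \geq 1$ and $\epsilon = \pm 1$, so $|b|^{2} = n^{2} + 1$. The first step is to show that $D = \{0, 1, \ldots, n^{2}\}$ is a complete residue system (CRS) modulo $b$ in $\mathbb{Z}[i]$: if $j \equiv k \pmod{b}$ with $j, k \in D$, write $j - k = b(p + qi)$; equating imaginary parts forces $p = \epsilon n q$, and substituting back gives $j - k = -\epsilon(n^{2} + 1)q$, so $|j - k| \leq n^{2} < n^{2} + 1$ forces $q = 0$ and hence $j = k$. Given any $g \in \mathbb{Z}[i]$, iterate the division map $T(g) = (g - \lambda(g))/b$, where $\lambda(g) \in D$ is the unique residue representative of $g$. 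The triangle inequality gives $|T(g)| \leq (|g| + n^{2})/\sqrt{n^{2}+1}$, which is strictly less than $|g|$ once $|g| > \sqrt{n^{2}+1} + 1$, so every orbit enters a bounded region. Termination at $0$ then reduces to ruling out nonzero periodic orbits of $T$ inside this region: a $k$-cycle satisfies $(b^{k} - 1) g = \lambda_{0} + b \lambda_{1} + \cdots + b^{k-1} \lambda_{k-1}$, and for $k = 1$ this is $(b - 1) g = -\lambda$, where $|b-1|^{2} = (n+1)^{2} + 1 > n^{2} = |b|^{2} - 1$ means the smallest positive rational integer multiple of $b-1$ exceeds every admissible digit, so $\lambda = 0$. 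A finer analysis of the same identity handles longer cycles. Uniqueness is immediate from the CRS property applied to the lowest-order disagreeing digit of two expansions.

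For necessity, suppose $b = \alpha + \beta i$ admits unique expansions with digit set $\{0, 1, \ldots, |b|^{2} - 1\}$. An analogous mod-$b$ computation shows this set is a CRS only when $\gcd(\alpha, \beta) = 1$, so assume so. If $\beta = 0$, only real numbers are representable. If $\alpha = 0$ and $|\beta| = 1$, then $|b|^{2} = 1$ and only $0$ is representable. If $\alpha = 0$ and $|\beta| \geq 2$, separating the real and imaginary parts of $\sum \lambda_{j} b^{j}$ shows the imaginary part must be a multiple of $\beta$, excluding $1 + i$. If $\alpha \geq 1$, the inequality $|b-1|^{2} = (\alpha - 1)^{2} + \beta^{2} \leq \alpha^{2} + \beta^{2} - 1 = |b|^{2} - 1$ makes $\lambda = |b-1|^{2}$ an admissible digit, and $g = -\overline{b-1}$ is then a nonzero fixed point of $T$, hence has no finite expansion. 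The final case $\alpha \leq -1$ with $|\beta| \geq 2$ splits: when $\gcd(\alpha - 1, \beta) > 1$, the same fixed-point construction applies with a smaller rational integer multiple of $b - 1$; otherwise a direct search produces a short periodic orbit of $T$ (for instance, the $2$-cycle $1 + i \leftrightarrow 1 + 2i$ when $b = -1 + 3i$).

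The main obstacle is the necessity direction, specifically the sub-case $\Re(b) \leq -1$ with $|\Im(b)| \geq 2$ and $\gcd(\Re(b) - 1, \Im(b)) = 1$: here the failure of termination comes from a periodic orbit of $T$ whose exact form depends on arithmetic features of $b$ and must be exhibited by inspection rather than a single uniform formula. The sufficiency proof is conceptually clean once the CRS property is in hand, but the verification that the bounded attracting region of $T$ contains no nonzero periodic orbits still requires a dedicated norm estimate applied to the $k$-cycle equation.
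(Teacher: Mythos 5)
The paper does not actually prove this theorem; it is quoted from Katai and Szabo \cite{KS74}, so there is no in-paper argument to compare against, and your proposal has to stand on its own. Your outline follows the standard canonical-number-system strategy (complete residue system plus the backward division map $T$), and much of it is sound: the CRS computation, the contraction estimate forcing every orbit into the ball of radius $\sqrt{n^{2}+1}+1$, the exclusion of nonzero fixed points of $T$, and the fixed point $g=-\overline{b-1}$ for $\Re(b)\geq1$ all check out. But there are two genuine gaps. The first is in the sufficiency direction, where the entire difficulty is concentrated in the step you defer: showing that $T$ has no nonzero periodic orbit of period $k\geq2$ inside the bounded region. The norm estimate applied to $(b^{k}-1)g=-(\lambda_{0}+b\lambda_{1}+\cdots+b^{k-1}\lambda_{k-1})$ only reproduces the bound $|g|\leq\sqrt{n^{2}+1}+1$ and does not exclude longer cycles; such cycles genuinely occur for nearby bases (your own example $b=-1+3i$ has a $2$-cycle), so any successful argument must use the hypothesis $\Im(b)=\pm1$ in an essential way. ``A finer analysis of the same identity'' is not a proof, and this is precisely where the content of the theorem lives.

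The second gap is in the necessity direction: the sub-case $\Re(b)\leq-1$, $|\Im(b)|\geq2$, $\gcd(\Re(b)-1,\Im(b))=1$ comprises infinitely many bases, and ``a direct search produces a short periodic orbit'' cannot settle infinitely many cases by inspection. Fortunately this gap closes with an argument you already use for $\Re(b)=0$: since $b^{2}=2\Re(b)\,b-|b|^{2}$, every finite sum $\sum_{j}\lambda_{j}b^{j}$ with $\lambda_{j}\in\mathbb{Z}$ lies in the subring $\mathbb{Z}[b]=\mathbb{Z}+\beta i\,\mathbb{Z}$ where $\beta=\Im(b)$, so every representable Gaussian integer has imaginary part divisible by $\beta$, and $i$ is not representable whenever $|\beta|\geq2$ --- for every value of $\Re(b)$, not only $\Re(b)=0$. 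This collapses the necessity analysis to three cases: $\beta=0$ (only reals are represented), $|\beta|\geq2$ (the divisibility obstruction), and $|\beta|=1$ with $\Re(b)\geq0$ (your fixed point $-\overline{b-1}$, together with the trivial bases $b=\pm i$ and $b=1$). One further small point: in the $k=1$ exclusion you should say explicitly that, because $\gcd(n+1,1)=1$, any rational integer divisible by $b-1$ in $\mathbb{Z}[i]$ is divisible by $N(b-1)=(n+1)^{2}+1>n^{2}$; the inequality $|b-1|^{2}>|b|^{2}-1$ alone does not quite deliver the conclusion as written.
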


In other words, the base must be of the form $b = -n\pm i$ where $n$ is a positive integer and the set of digits is $\{0, 1, \ldots, n^{2}\}$. This result can be used to prove a result about complex radix expansions, found in \cite{KS74}. 

\begin{definition} 
Let $b$ be a Gaussian integer and $D \subset \mathbb{Z}$. We call any infinite series of the form
\begin{equation}
z = d_{\ell}b^{\ell} + d_{\ell-1}b^{\ell-1} + \cdots + d_{0} + \sum_{j=1}^{\infty}d_{-j}b^{-j}.
\end{equation}
where $\ell$ is some integer and $d_{k}\in D$ for all $k \leq \ell$ a \textit{radix expansion in base $(b, D)$}. 
\end{definition}

\begin{theorem}[I. Katai, J. Szabo, \cite{KS74}, theorem 2] \label{thm:radixExist} 
Suppose $n$ is a positive integer and set $b = -n+i$. Every complex number has a radix expansion in base $(b, \{0, 1, \ldots, n^{2}\})$. 
\end{theorem}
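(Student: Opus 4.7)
The plan is to bootstrap the previous theorem (Katai--Szabo) by a tiling argument. Let
\[
T := \left\{ \sum_{j=1}^{\infty} d_{-j} b^{-j} : d_{-j} \in \{0, 1, \ldots, n^{2}\} \right\}
\]
denote the attractor of the IFS $\{z \mapsto b^{-1}(z + d) : d \in D\}$, where $D := \{0, 1, \ldots, n^{2}\}$. If every $z \in \mathbb{C}$ can be decomposed as $z = g + t$ with $g \in \mathbb{Z}[i]$ and $t \in T$, then the Katai--Szabo theorem expresses $g$ as a finite sum of nonnegative powers of $b$ with digits in $D$, while the definition of $T$ expresses $t$ as an infinite sum of negative powers of $b$ with digits in $D$; concatenating these gives a radix expansion of $z$. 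Thus the whole task reduces to proving $T + \mathbb{Z}[i] = \mathbb{C}$.

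To that end, I would first verify that $D$ is a complete system of residues modulo $b$ in $\mathbb{Z}[i]$: for $d, d' \in D$, divisibility $b \mid (d - d')$ in $\mathbb{Z}[i]$ forces $(n^{2}+1) \mid (d - d')$ in $\mathbb{Z}$, which $|d - d'| \leq n^{2}$ rules out unless $d = d'$. Together with $|D| = n^{2}+1 = |b|^{2}$, this gives $D + b\mathbb{Z}[i] = \mathbb{Z}[i]$. Combining this with the functional equation $bT = T + D$ that characterizes the attractor, I would then compute
\[
b\bigl(T + \mathbb{Z}[i]\bigr) = (T + D) + b\mathbb{Z}[i] = T + \mathbb{Z}[i],
\]
so the set $T + \mathbb{Z}[i]$ is invariant under multiplication by $b$, and hence by $b^{-1}$ as well.

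Since $0 \in T$, the set $T + \mathbb{Z}[i]$ contains $\mathbb{Z}[i]$, and $b^{-1}$-invariance upgrades this to $b^{-k}\mathbb{Z}[i] \subseteq T + \mathbb{Z}[i]$ for every $k \geq 0$. Because $|b| = \sqrt{n^{2}+1} > 1$, these rescaled lattices have mesh $|b|^{-k} \to 0$, so their union is dense in $\mathbb{C}$. To promote density to equality, I would argue that $T + \mathbb{Z}[i]$ is closed: if $t_{n} + g_{n} \to z$ with $t_{n} \in T$ and $g_{n} \in \mathbb{Z}[i]$, compactness of $T$ forces $(g_{n})$ to be bounded and hence to take only finitely many values, so after passing to a subsequence where $g_{n} = g$ is constant, $t_{n} \to z - g \in T$. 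Density combined with closedness then yields $T + \mathbb{Z}[i] = \mathbb{C}$, and assembling the pieces completes the proof.

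The main obstacle is the tiling identity $T + \mathbb{Z}[i] = \mathbb{C}$; once it is established, every other step (the residue computation, the $b$-invariance, the closedness argument, and the final assembly of the expansion) is routine.
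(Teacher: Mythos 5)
The paper does not prove this statement at all; it is quoted from Katai--Szabo \cite{KS74} as a known result, so there is no in-paper argument to compare against. Your proposal is a correct, self-contained proof, and it is essentially the standard tiling argument for canonical number systems: reduce to showing $T+\mathbb{Z}[i]=\mathbb{C}$, then combine the self-affine identity $bT=T+D$ with the fact that $D$ is a complete residue system modulo $b$ to get $b$-invariance of $T+\mathbb{Z}[i]$, and finish with density of $\bigcup_{k}b^{-k}\mathbb{Z}[i]$ plus closedness. Every step is sound. Two small points you should make explicit when writing it up: (1) the divisibility step is cleanest via $m/b=m\bar{b}/(n^{2}+1)$, whose imaginary part is $-m/(n^{2}+1)$, so $b\mid m$ for a rational integer $m$ forces $(n^{2}+1)\mid m$ directly (taking norms alone only gives $(n^{2}+1)\mid m^{2}$); and (2) the passage from $b(T+\mathbb{Z}[i])=T+\mathbb{Z}[i]$ to $b^{-k}\mathbb{Z}[i]\subseteq T+\mathbb{Z}[i]$ uses that multiplication by $b$ is a bijection of $\mathbb{C}$, so the set equals its own image under $b^{-1}$ as well. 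With those details filled in, the argument is complete and is in the same spirit as the original Katai--Szabo proof, which also passes from the integer representation (their Theorem 1) to arbitrary complex numbers by an approximation/covering argument.
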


The radix expansions in base $(-n+i, \{0, 1, \ldots, n^{2}\})$ featured in Theorem~\ref{thm:radixExist} are not always unique. In fact, complex numbers can have up to three distinct radix expansions in base $(-n+i, \{0, 1, \ldots, n^{2}\})$ \cite{G82}. The rules for when two distinct radix expansions in base $(-n+i, \{0, 1, \ldots, n^{2}\})$ evaluate to the same complex number are significantly more complicated and present challenges that are not present when working with real numbers. Pedersen and Shaw grappled with some of these challenges in \cite{PS21}. 

To state the theorem proved by Pedersen and Shaw in \cite{PS21}, we define a function analogous to the function studied by Davis and Hu in \cite{DH95}. We provide several supporting definitions that are also used in later sections of this paper. 

\begin{definition}\label{def:restrictedDigitSet} 
Fix an integer $n\geq1$, let $b := -n +i$ and suppose $D\subset\{0, 1, \ldots, n^{2}\}$. We call the attractor of the IFS given by $\{f_{d}(x) = b^{-1}(x+d) : d\in D\}$, the \textit{restricted digit set generated by $(n, D)$} and denote it by $C_{n, D}$. In the special case that $D = \{0, 1, \ldots, n^{2}\}$, then we call $C_{n, D}$ the \textit{$n$th fundamental tile} and denote it by $T_{n}$.
\end{definition}

\begin{definition}\label{def:fundTrans} 
Fix an integer $n\geq2$ and suppose $D\subset\{0, 1, \ldots, n^{2}\}$. We call the set of $\alpha\in\mathbb{C}$ such that $C_{n, D}\cap (C_{n, D} + \alpha)$ is nonempty the \textit{fundamental set of translations generated by $(n, D)$} and denote it by $F_{n, D}$. 
\end{definition}

Let us recall the definition of the box-counting dimension for subsets of Euclidean space. 

\begin{definition}\label{def:boxcounting} 
Let $X$ be a bounded subset of $\mathbb{R}^{m}$. Given $\delta>0$, we let $N_{\delta}(X)$ denote the smallest number of sets of diameter $\delta$
needed to cover $X$. The \textit{upper box-counting dimension} and the \textit{lower box-counting dimension of $X$} are
\begin{align}
\overline{\dim}_{B}X &:= \limsup_{\delta\rightarrow0}\frac{\log N_{\delta}(X)}{-\log\delta}, \\
\underline{\dim}_{B}X &:= \liminf_{\delta\rightarrow0}\frac{\log N_{\delta}(X)}{-\log\delta}, 
\end{align}
respectively. If these quantities are equal, then that value is \textit{the box-counting dimension of $X$} and is denoted by $\dim_{B}X$. 
\end{definition}

The box-counting dimension may not always exist. For any positive integer $n\geq1$ and subset $D\subset\{0, 1, \ldots, n^{2}\}$, $C_{n, D}$ is self-similar by definition and thus its box-counting dimension exists (see corollary 3.3 in \cite{F97}). 

\begin{definition}\label{def:dimFunc} 
Fix an integer $n\geq2$ and suppose $D\subset\{0, 1, \ldots, n^{2}\}$. Let $C(\alpha) := C_{n, D} \cap (C_{n, D} + \alpha)$ for $\alpha \in \mathbb{C}$. We define the function 
\begin{align}
&\Phi_{n, D} : \{\alpha\in F_{n, D}: \dim_{B}C(\alpha) \;\; \text{exists}\} \rightarrow [0, \dim_{B}C_{n, D}] \\
&\Phi_{n, D}(\alpha) = \dim_{B}C(\alpha).
\end{align}
\end{definition}

We can now state the theorem in \cite{PS21} and our own theorem. 

\begin{theorem}[S. Pedersen, V. Shaw, \cite{PS21}, corollary 7.5] \label{thm:oldBound} 
Fix an integer $n\geq3$. Suppose $D\subset\{0, 1, \ldots, \lfloor{n^{2}/2}\rfloor\}$ satisfies the condition $|\delta - \delta^{'}| > n$ for all $\delta \neq \delta^{'}$ in $D-D$. Then the level sets of $\Phi_{n, D}$ are dense in $F_{n, D}$.
\end{theorem}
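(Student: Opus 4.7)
The plan is to recode the problem symbolically, read off $\dim_B C(\alpha)$ from the digit sequence of $\alpha$, and then obtain density of the level sets by a tail-splicing construction.

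Every $z \in C_{n, D}$ admits a base-$b$ expansion $z = \sum_{k\geq 1} x_k b^{-k}$ with $x_k \in D$, so every $\alpha \in F_{n, D}$ has an expansion
\[
\alpha = \sum_{k\geq 1} \delta_k b^{-k}, \qquad \delta_k \in D - D.
\]
The first key point is that under the hypotheses $d \leq \lfloor n^{2}/2 \rfloor$ and $|\delta - \delta'| > n$ for all distinct $\delta, \delta' \in D - D$, this expansion is \emph{unique}. If two such expansions first differ at index $m$, then the leading discrepancy contributes $(\delta_m - \delta'_m)b^{-m}$ of modulus at least $n|b|^{-m}$, while the sum of all later terms is bounded by a geometric series in $|b|^{-1}$ whose coefficients lie in $\{-\lfloor n^{2}/2\rfloor,\ldots,\lfloor n^{2}/2\rfloor\}$; an elementary computation shows the tail has strictly smaller modulus, yielding a contradiction.

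Given unique coding, $z \in C(\alpha)$ iff $z = \sum x_k b^{-k}$ with $x_k \in D \cap (D + \delta_k)$ for every $k$. Setting $N_k := |D \cap (D + \delta_k)|$, the level-$K$ cylinders of $C(\alpha)$ are in bijection with strings in $\prod_{k=1}^K (D \cap (D + \delta_k))$, there are $\prod_{k=1}^K N_k$ of them, each of diameter of order $|b|^{-K}$, and the separation hypothesis prevents distinct cylinders from coinciding. Standard box-counting yields
\[
\dim_B C(\alpha) = \frac{1}{\log|b|}\lim_{K\to\infty}\frac{1}{K}\sum_{k=1}^K \log N_k
\]
whenever the limit on the right exists. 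To produce density of level sets, I first construct a reference $\alpha^* \in F_{n, D}$ with $\Phi_{n, D}(\alpha^*) = \lambda \dim_B C_{n, D}$ by interleaving blocks of difference digits with $N_k = 1$ (taking $\delta_k$ extremal in $D - D$) and blocks with $N_k = |D|$ (taking $\delta_k = 0$) in proportions forcing the Cesaro average of $\log N_k$ to converge to $\lambda \log |D|$. Then for any $\alpha \in F_{n, D}$ with digits $(\delta_k)$ and any $\epsilon > 0$, I choose $N$ large and set
\[
\alpha' := \sum_{k=1}^N \delta_k b^{-k} + b^{-N}\alpha^*.
\]
The expansion of $\alpha'$ agrees with that of $\alpha$ on the first $N$ coordinates and with that of $\alpha^*$ thereafter, so $\alpha' \in F_{n, D}$, $|\alpha - \alpha'| = O(|b|^{-N}) < \epsilon$, and $\Phi_{n, D}(\alpha') = \Phi_{n, D}(\alpha^*)$, since the dimension formula depends only on the tail.

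The main obstacle is the cylinder-counting step. Unique symbolic coding is not by itself enough; one needs that distinct symbolic cylinders remain geometrically separated, an open-set-type condition for the induced graph-directed system generating $C(\alpha)$. This is where the hypothesis $|\delta - \delta'| > n$ and the half-range condition $d \leq \lfloor n^{2}/2 \rfloor$ are both used in full strength: the former controls the minimum separation between same-level cylinders with different $\delta_k$, and the latter prevents the difference-digit set from wrapping around and creating new coincidences through carry-like phenomena in the complex base. Relaxing either hypothesis — as the paper later promises to do — requires a finer quantitative overlap analysis, but the density conclusion itself then follows immediately from the same splicing argument once the dimension formula is re-established.
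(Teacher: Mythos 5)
Your overall architecture is the same as the paper's: symbolic product structure for $C(\alpha)$, a box-dimension formula $\dim_B C(\alpha)=\lim_K \frac{1}{K\log|b|}\sum_{k\le K}\log|D\cap(D+\delta_k)|$, and then the tail-splicing construction (which is exactly the proof of Corollary~\ref{cor:boxDense}). The splicing step and the lower bound on the cylinder count are fine. The gap is in your ``first key point,'' the uniqueness of the expansion $\alpha=\sum\delta_k b^{-k}$ with $\delta_k\in D-D$, which you need in order to identify $C(\alpha)$ with $\pi\bigl(\prod_k (D\cap(D+\delta_k))\bigr)$. If two such expansions first differ at index $m$, the tail coefficients are $\delta_k-\delta'_k$ with $\delta_k,\delta'_k\in D-D$, so they range over $[-2\lfloor n^2/2\rfloor,\,2\lfloor n^2/2\rfloor]\subset[-n^2,n^2]$, not over $\{-\lfloor n^2/2\rfloor,\dots,\lfloor n^2/2\rfloor\}$ as you assert. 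With the correct range the geometric bound on the tail is
\[
n^2\sum_{j\ge1}|b|^{-j}=\frac{n^2}{\sqrt{n^2+1}-1}=\sqrt{n^2+1}+1,
\]
while the leading discrepancy only satisfies $|\delta_m-\delta'_m|\ge n+1$. Since $n+1<\sqrt{n^2+1}+1<n+2$, the single integer value $n+1$ survives the modulus comparison, and your ``elementary computation'' does not close. This is not a removable technicality: the analogous borderline values really are neighbours in general (e.g.\ $\pm1$ for $T_n$ and $\pm2$ for $\mathcal{E}_n$), so any proof must do more than compare moduli.

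The uniqueness statement you want is true, but the paper obtains it only through the finer estimate of Lemma~\ref{lem:reimbound}, namely $|\Re(s)-n\Im(s)|<2$ for neighbours $s$ of $T_n$ (doubled for the extended tile), which forces real neighbours to have modulus at most $2$ or $3$ and in particular rules out $n+1$ for $n\ge3$; this is precisely the technical content that distinguishes the paper from a modulus bound. Alternatively, Theorem~\ref{thm:boundedDigitFormula} avoids the uniqueness of the $(b,D-D)$-expansion altogether: writing $\alpha_j=d''_j-d'''_j$ and adding $\pi(d'''_j)_{j=1}^{\infty}$ to both sides of $\pi(d_j)=\pi(d'_j+\alpha_j)$ puts all digits into $D+D\subset\{0,1,\dots,n^2\}$ (this is where the half-range hypothesis is used), and then only disjointness of $k$-tiles of $T_n$ is needed (Lemma~\ref{lem:sepOne}) --- which again rests on Corollary~\ref{cor:realNeighbours}. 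You should either import one of these two devices or supply a genuinely sharper separation estimate; as written, the step on which everything else depends is unproved.
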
 

In this paper, we demonstrate that the condition $|\delta - \delta^{'}| > n$ for all $\delta \neq \delta^{'}$ in $D-D$ can be significantly relaxed. In fact, it is possible to achieve the result with a lower bound that does not depend on $n$. We state our theorem using the same notation as in Theorem~\ref{thm:oldBound}. 

\begin{theorem}\label{thm:improvement} 
Suppose one of the following holds
\begin{itemize}
\item[(i)] $n$ is a positive integer greater than or equal to $2$ and $D\subset\{0, 1, \ldots, \lfloor n^{2}/2 \rfloor\}$ satisfies $|\delta - \delta^{'}| > 1$ for all $\delta \neq \delta^{'}\in\Delta := D - D$. 
\item[(ii)] $n$ is a positive integer greater than or equal to $5$ and $D\subset\{0, 1, \ldots, n^{2}\}$ satisfies $|\delta - \delta^{'}| > 2$ for all $\delta \neq \delta^{'}\in\Delta$. 
\item[(iii)] $n$ is one of $2, 3$ or $4$ and $D\subset\{0, 1, \ldots, n^{2}\}$ satisfies $|\delta - \delta^{'}| > 3$ for all $\delta \neq \delta^{'}\in\Delta$. 
\end{itemize}
Then the level sets of $\Phi_{n, D}$ are dense in $F_{n, D}$.
\end{theorem}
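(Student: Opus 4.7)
My plan is to adapt the Pedersen--Shaw framework underlying Theorem~\ref{thm:oldBound}: encode $C(\alpha)$ as a graph-directed attractor whose data are extracted from a radix expansion of $\alpha$ in base $(b,\Delta)$ with $\Delta = D-D$, and then obtain density of level sets by perturbing $\alpha$ via carefully chosen eventually periodic tails. The separation hypotheses in (i)--(iii) enter only when verifying that distinct $\Delta$-cylinders contribute independently to the intersection, so most of the proof should parallel the existing argument; the real work is redoing the combinatorial/geometric check under the weaker conditions.

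First I would confirm that $\alpha \in F_{n,D}$ if and only if $\alpha$ has a radix expansion in base $(b,\Delta)$: one direction follows from the self-similar structure of $C_{n,D}$ (if $x \in C(\alpha)$, the digit-by-digit difference of the base-$(b,D)$ expansions of $x$ and $x-\alpha$ yields an expansion of $\alpha$) and the other direction is constructive. For each finite prefix $\delta_1 \cdots \delta_N$ of such an expansion I would define a state recording the relevant geometric configuration and assemble the finite set of states that actually occur. Under the hypotheses of (i), (ii), or (iii), I expect to show that the cylinders $b^{-1}(C_{n,D} + \delta)$ for $\delta \in \Delta$ are sufficiently disjoint that $C(\alpha)$ is the attractor of a graph-directed IFS satisfying an open set condition, so its box-counting dimension equals $\log\rho/\log|b|$ where $\rho$ is the Perron eigenvalue of the incidence matrix restricted to the states visited eventually.

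For density, given $\alpha_0 \in F_{n,D}$, $\varepsilon>0$ and a target $t \in [0,\dim_B C_{n,D}]$, I would fix a base-$(b,\Delta)$ expansion of $\alpha_0$, truncate it after enough terms that the tail contributes less than $\varepsilon$, and append an eventually periodic $\Delta$-tail whose transition matrix has Perron eigenvalue $|b|^{t}$. When $|b|^{t}$ is not realised by any periodic tail, I would interpolate using blocks of varying periods (a standard construction mixing two transition structures in a prescribed proportion), producing a non-periodic but still controllable tail whose box-counting dimension is exactly $t$. This yields $\alpha' \in F_{n,D}$ with $|\alpha'-\alpha_0|<\varepsilon$ and $\Phi_{n,D}(\alpha') = t$, proving density of the level set $\Phi_{n,D}^{-1}(t)$.

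The principal difficulty is the geometric step in cases (ii) and especially (iii): when the separation between $\Delta$-digits drops below $|b|$, the closures of the $b^{-1}$-scaled cylinders of $C_{n,D}$ can touch or even overlap in the complex plane, even though the analogous sets on the real line would remain disjoint. I anticipate needing a careful enumeration of which pairs of cylinders can share points and using this to identify states in the graph-directed system, so that the open set condition is replaced by a weaker regularity still sufficient to pin down box-counting dimension. Case (iii) for $n \in \{2,3,4\}$ appears to require a direct finite check, which explains the slightly stronger constant there, and the non-uniqueness of base-$(b,\Delta)$ expansions will need to be handled by fixing a canonical choice (for example, the lexicographically least). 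Finally, the passage from box-counting to Hausdorff or packing dimension alluded to in the abstract should follow from the graph-directed structure, since all three dimensions coincide for graph-directed self-similar sets satisfying the open set condition.
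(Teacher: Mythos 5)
Your high-level strategy (encode $C(\alpha)$ through a base-$(b,\Delta)$ expansion of $\alpha$, then perturb the tail to hit a prescribed dimension) is essentially the paper's, and your density construction via mixing two transition structures in a prescribed proportion is exactly what the paper does by interleaving the digits $0$ and $d_{\max}-d_{\min}$ according to $h_j=\lfloor j\lambda\rfloor$. But the proposal has a genuine gap: the entire content of the theorem lives in the specific constants ($|\delta-\delta'|\neq 1$ in (i), $>2$ for $n\geq 5$, $>3$ for $n\in\{2,3,4\}$), and you defer precisely this to an unexecuted ``careful enumeration of which pairs of cylinders can share points.'' The paper resolves it by proving a quantitative estimate on neighbours: if $s$ is a Gaussian integer with $T_n\cap(T_n+s)\neq\emptyset$, then $|\Re(s)-n\Im(s)|<2$ (and $<3/2$ for $n\geq5$), obtained by explicitly expanding $s=\sum\delta_j b^{-j}$ with $\delta_j\in\{0,\pm1,\dots,\pm n^2\}$ and bounding the linear combination $\Re(s)-n\Im(s)$ term by term. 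Doubling this bound for the extended tile $\mathcal{E}_n$ shows its real neighbours lie in $\{0,\pm1,\pm2\}$ for $n\geq5$ and $\{0,\pm1,\pm2,\pm3\}$ for $n=2,3,4$ (the small cases via the neighbour-finding algorithm), which is exactly where the thresholds come from. Without some version of this estimate your argument cannot distinguish, say, separation $>2$ from separation $>n$, so it does not improve on Theorem~\ref{thm:oldBound}.

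Two further points. First, your proposal does not explain why case (i) carries the extra hypothesis $D\subset\{0,1,\ldots,\lfloor n^2/2\rfloor\}$ while tolerating the weakest separation: the paper's proof of the covering step in this case adds a digit sequence to both sides so as to compare two expansions with digits in $D+D\subset\{0,\ldots,n^2\}$, thereby invoking only the neighbours of the fundamental tile $T_n$ (which are $\{0,\pm1\}$) rather than of $\mathcal{E}_n$; the bound $\lfloor n^2/2\rfloor$ is what keeps the sums legal. Your uniform treatment of (i)--(iii) through ``$\Delta$-cylinders'' would force you to work with $\mathcal{E}_n$ throughout and would lose case (i). Second, the Perron-eigenvalue framing is misleading for general $\alpha\in F_{n,D}$: under the separation hypotheses there are no carries between levels, $C(\alpha)$ is the product-type set $\pi\bigl(\prod_j D\cap(D+\alpha_j)\bigr)$, and its lower box dimension is $\liminf_k \log M_k(\alpha)/(k\log|b|)$ with $M_k(\alpha)=\prod_{j=1}^{k}|D\cap(D+\alpha_j)|$ --- a varying finite product, not the spectral radius of a single incidence matrix; the graph-directed machinery is not needed and does not apply verbatim to non-eventually-periodic tails.
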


 If any of (i), (ii), or (iii) hold, then the level sets of the maps obtained by replacing the box-counting dimension in Definition~\ref{def:dimFunc} by Hausdorff dimension or packing dimension are also dense in $F_{n, D}$. This is because $C(\alpha)$ is of Moran type. We discuss this in Section~\ref{sec:boxIntersect}. We provide definitions of Hausdorff dimension and packing dimension in Section~\ref{section:Background}. 

The conditions present in (ii) and (iii) are paired frequently enough that it is convenient to package them as a single definition. 

\begin{definition} 
Fix a positive integer $n\geq 2$ and let $D$ be a subset of $\{0, 1, \ldots, n^{2}\}$. The set $\Delta := D-D$ is called \textit{sparse} if for all $\delta, \delta^{'}\in\Delta$ either $|\delta-\delta^{'}| > 2$ when $n\geq5$ or $|\delta-\delta^{'}| > 3$ when $n = 2, 3,$ or $4$. 
\end{definition}

Recall that for a fixed integer $n\geq 2$ and $D\subset\{0, 1, \ldots, n^{2}\}$, we use $C_{n, D}$ to denote the attractor of the IFS containing all the maps $f_{d}(x) = b^{-1}(x+d)$ where $d\in D$ and $b := -n+i$. The conditions on the distance between elements of $\Delta$ controls the overlaps among the images $f_{d}(C_{n, D}), d\in D$. If $f_{d}(C_{n, D}) \cap f_{d^{'}}(C_{n, D})$ is empty for all $d\neq d^{'}$, then the IFS defined by $D$ is said to satisfy the \textit{strong separation condition} (SSC). Once a lower bound on the differences $|\delta-\delta^{'}|$ is found to imply the SSC, we can employ a technique present in \cite{PS21} to conclude that the level sets of $\Phi_{n, D}$ are dense. The original condition $|\delta - \delta^{'}| > n$ for all $\delta \neq \delta^{'}$ in $\Delta$ was achieved without leveraging the properties of radix expansions in base $(-n+i, \{0, 1, \ldots, n^{2}\})$. Those properties are intimately connected to the structure of $C_{n, D}$. We achieved (i) by applying the following result of Gilbert \cite{G82}. A similar statement can be made for the special case $n = 2$. 

\begin{theorem}[W. Gilbert, \cite{G82}, proposition 1]\label{thm:gil3} 
Fix an integer $n \geq 3$. Let $T$ be the attractor of the IFS $f_{d}(z) = (-n+i)^{-1}(z+d)$ where $d\in\{0, 1, \ldots, n^{2}\}$. The intersection $T\cap(T + s)$ is nonempty when $s$ is a Gaussian integer if and only if $s\in\{0, \pm 1, \pm (n + i), \pm (n-1+i)\}$. 
\end{theorem}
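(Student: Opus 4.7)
The plan is to recast the problem dynamically. Since $T \cap (T + s) \neq \emptyset$ is equivalent to $s \in T - T$, the object of interest is $E := \mathbb{Z}[i] \cap (T - T)$. The self-affine identity
\[
T - T = \bigcup_{c = -n^2}^{n^2} b^{-1}\bigl((T - T) + c\bigr)
\]
means that a Gaussian integer $s$ lies in $E$ if and only if there exists a sequence $c_1, c_2, \ldots \in \{-n^2, \ldots, n^2\}$ such that the iterates $s_0 := s$, $s_j := b s_{j-1} - c_j$ (automatically Gaussian integers, since $b \in \mathbb{Z}[i]$) remain in $T - T$, equivalently in the finite set $E$. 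Conversely, any finite $F \subseteq \mathbb{Z}[i]$ with the property that each $s \in F$ admits some $c \in \{-n^2,\ldots,n^2\}$ with $bs - c \in F$ is automatically contained in $E$: unwinding the recursion gives $s_0 = b^{-N} s_N + \sum_{j=1}^N b^{-j} c_j$, and since $s_N$ stays bounded this converges to $\sum_{j \geq 1} b^{-j} c_j \in T - T$. Hence $E$ is the largest finite subset of $\mathbb{Z}[i]$ closed under the map $s \mapsto bs - c$ for some choice $c \in \{-n^2,\ldots,n^2\}$.

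For the ``if'' direction I would verify by direct computation that $\{0, \pm 1, \pm(n + i), \pm(n - 1 + i)\}$ is closed under the dynamics. The cycle $1 \to n + i \to -1 \to -(n + i) \to 1$ is realized by the digits $-2n,\ -n^2,\ 2n,\ n^2$ respectively; the transitions $n - 1 + i \to -(n + i)$ and $-(n - 1 + i) \to n + i$ use digits $-(n - 1)^2$ and $(n - 1)^2$; the point $0$ fixes itself with digit $0$. Each digit has absolute value at most $n^2$ whenever $n \geq 3$, so all seven claimed values lie in $E$.

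For the ``only if'' direction the critical step is to bound the imaginary part of $T - T$. Set $M_x := \sup\{|\Re z| : z \in T - T\}$ and $M_y := \sup\{|\Im z| : z \in T - T\}$. Extracting real and imaginary parts of $b^{-1}(z' + c) = (-n - i)(z' + c)/(n^2 + 1)$ and taking suprema over $z' \in T - T$ and $|c| \leq n^2$ yields
\begin{align*}
(n^2 - n + 1) M_y &\leq n^2 + M_x, \\
(n^2 - n + 1) M_x &\leq n^3 + M_y.
\end{align*}
Eliminating $M_x$ gives $M_y \leq n(n^2 + 1)/\bigl[(n - 1)(n^2 - n + 2)\bigr]$, which is strictly less than $2$ for every $n \geq 3$. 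Consequently $\Im(s) \in \{-1, 0, 1\}$ for every $s \in E$, and since $bs - c$ must also belong to $E$, the quantity $\Im(bs - c) = \Re(s) - n\Im(s)$ is in $\{-1, 0, 1\}$ as well. These two constraints leave exactly nine candidates for $s$: the seven claimed values together with $\pm(n + 1 + i)$. The latter two are eliminated by a one-step check: for $s = n + 1 + i$ we have $bs - c = -(n^2 + n + 1 + c) + i$, and matching this to any candidate of imaginary part $1$ (whose real part must lie in $\{n - 1, n, n + 1\}$) forces $|c| \geq n^2 + 2n$, outside the admissible range.

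The main obstacle is the imaginary-part bound. The crude geometric estimate $M_y \leq n^2/(\sqrt{n^2 + 1} - 1)$ exceeds $2$, so the two coupled self-affine inequalities above are essential to pin $M_y$ below $2$ uniformly in $n \geq 3$. Once that bound is secured, the remainder is a finite and essentially mechanical case analysis.
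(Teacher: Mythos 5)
Your argument is correct, and it reaches the result by a genuinely different route from the paper's. The paper's engine is Lemma~\ref{lem:reimbound}, which bounds $|\Re(s)-n\Im(s)|<2$ by expanding $s=\sum_j \delta_j b^{-j}$ explicitly through four terms and estimating the tail, and it then needs a separate magnitude estimate $|s|\leq\sqrt{n^2+1}+1$ to rule out $|\Im(s)|\geq2$. You instead exploit the self-affine identity for $T-T$ twice: once to set up the coupled inequalities $(n^2-n+1)M_y\leq n^2+M_x$ and $(n^2-n+1)M_x\leq n^3+M_y$, whose solution $M_y\leq n(n^2+1)/[(n-1)(n^2-n+2)]<2$ for $n\geq3$ kills the imaginary part in one stroke, and once more (via the forward-invariance $s\mapsto bs-c$, which is exactly the paper's Lemma~\ref{lem:infinitewalk}) to convert that same bound into $\Re(s)-n\Im(s)\in\{-1,0,1\}$. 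This inverts the paper's logical order --- the real-part constraint becomes a corollary of the imaginary-part bound rather than the primary estimate --- and replaces the term-by-term computation with a cleaner bootstrapping argument; I verified both inequalities and the elimination of $M_x$. The exclusion of $\pm(n+1+i)$ by a one-step digit check coincides with the paper's argument, and your converse (exhibiting the seven points as a finite set closed under the dynamics, with transition digits $0,\pm2n,\pm n^2,\pm(n-1)^2$ all of modulus at most $n^2$) is the dynamical rephrasing of the paper's explicit periodic radix expansions and is easier to verify. The one thing the paper's heavier computation buys that yours does not is the sharper constant $3/2$ for $n\geq5$ and the easy doubling of the bound used in Corollary~\ref{cor:realNeighbours} for the extended tile with digits up to $2n^2$; to recover those results your coupled inequalities would have to be rerun with $|c|\leq 2n^2$, which is routine but not done here. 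For the theorem as stated, your proof is complete.
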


Theorem~\ref{thm:gil3} is used in \cite{G82} to derive a graph that describes when two distinct radix expansions in base $(-n+i, \{0, 1, \ldots, n^{2}\})$ evaluate to the same complex number. Suppose that for a fixed integer $n\geq2$, $(d_{j})_{j=1}^{\infty}$ and $(e_{j})_{j=1}^{\infty}$ are distinct sequences in $\{0, 1, \ldots, n^{2}\}^{\mathbb{N}}$ and $J$ is the first index at which the sequences differ. If we have $\sum_{j=1}^{\infty}d_{j}b^{-j} = \sum_{j=1}^{\infty}e_{j}b^{-j}$ where $b:=-n+i$, then $|d_{J} - e_{J}| = 1$ is a consequence of Theorem~\ref{thm:gil3}. Hence the separation condition of $1$ present in (i) of Theorem~\ref{thm:improvement}.We take this further by determining the upper bound on $|d_{J} - e_{J}|$ when $d_{j}, e_{j}\in \{0, \pm 1, \ldots, \pm n^{2}\})$ for all $j$. This investigation revealed the conditions (ii) and (iii) in Theorem~\ref{thm:improvement}. In producing this work, we also give a new proof of Theorem~\ref{thm:gil3} that differs from Gilbert's approach. 

The results underpinning Theorem~\ref{thm:improvement} can also be used to partially address when $C(\alpha) := C_{n, D} \cap (C_{n, D}+\alpha)$ is self-similar. This is of interest because self-similarity can somtimes be leveraged to make the computation of Hausdorff and box-counting dimension significantly easier (see Theorem~\ref{thm:strongSimDim}). This was first achieved for the middle third Cantor set in \cite{DHW08}. Our results take inspiration from the more general approaches found in \cite{LYZ11} and \cite{PP14}. The existing results connect the self-similarity of $C(\alpha)$ to a property called \textit{strong eventual periodicity}. 

\begin{definition} 
A sequence $(a_{j})_{j\geq1}$ of integers is \textit{strongly eventually periodic} (SEP) if there exists a finite sequence $(b_{\ell})_{\ell = 1}^{p}$ and a nonnegative sequence $(c_{\ell})_{\ell = 1}^{p}$, where $p$ is a positive integer, such that
\begin{equation}
(a_{j})_{j\geq1} = (b_{\ell})\overline{(b_{\ell} + c_{\ell})_{\ell = 1}^{p}}, 
\end{equation}
where $\overline{(d_{\ell})_{\ell = 1}^{p}}$ denotes the infinite repetition of the finite sequence $(d_{\ell})_{\ell = 1}^{p}$.   
\end{definition}

Suppose $C_{3}$ is the middle third Cantor set ($D = \{0, 2\}$) and that $\alpha$ is chosen such that $C_{3} \cap (C_{3} + \alpha)$ is nonempty. It must be the case that $\alpha = \sum_{j=1}^{\infty}\alpha_{j}3^{-j}$ where $\alpha_{j}\in\{-2, 0, 2\}$. It is known that $C_{3} \cap (C_{3} + \alpha)$ is self-similar if and only if $(2-|\alpha_{j}|)_{j=1}^{\infty}$ is SEP \cite{DHW08}. 

We establish analogous results for the self-similar sets $C_{n, D}$ implicit in Theorem~\ref{thm:improvement}.

\begin{definition}\label{def:ExtDigitSet} 
Fix an integer $n\geq2$, let $b := -n +i$, and suppose $D\subset\{0, 1, \ldots,  n^{2}\}$. We call the attractor of the IFS given by $\{f_{\delta}(x) = b^{-1}(x+\delta) : \delta\in D-D\}$ the \textit{extended restricted digit set generated by $(n, D)$} and denote it by $E_{n, D}$. In the special case that $D = \{0, 1, \ldots, n^{2}\}$, then we call $E_{n, D}$ the \textit{$n$th extended tile} and denote it by $\mathcal{E}_{n}$.
\end{definition}

\begin{definition} 
Let $b$ be a Gaussian integer. We call the function $\pi_{b} : \mathbb{Z}^{\mathbb{N}}\rightarrow\mathbb{C}$ given by $\pi(d_{j})_{j=1}^{\infty} = \pi_{b}(d_{j})_{j=1}^{\infty} := \sum_{j=1}^{\infty}d_{j}b^{-j}$ the $b$-\textit{coding map}. 
\end{definition}

We will drop the subscript $b$ when it is clear from context.

\begin{theorem}\label{thm:sepSp} 
Fix an integer $n\geq 2$ and let $b:= -n+i$. Suppose $D = \{0, m\}$ where $2 \leq m \leq n^{2}$ and that $\alpha\in E_{n, D}$ is chosen such that $\alpha$ has a unique radix expansion in base $(b, \{0, \pm m\})$. Let $\gamma := \pi(\gamma_{j})_{j=1}^{\infty}$, where $\gamma_{j} = \min(\{0, m\}\cap(\{0, m\}+\alpha_{j}))$ for each $j$.

If $C(\alpha) := C_{n, D} \cap (C_{n, D} + \alpha)$ is self-similar and is the attractor of an IFS containing the similarity $f(x) = rx + (1 - r)\gamma$, then $(m -|\alpha_{j}|)_{j=1}^{\infty}$ is SEP. Conversely, if $(m -|\alpha_{j}|)_{j=1}^{\infty}$ is SEP, then $C(\alpha)$ is self-similar. 
\end{theorem}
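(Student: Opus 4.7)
The plan is to translate $C(\alpha)$ so that $\gamma$ sits at the origin, reduce self-similarity to a symbolic statement about $E := C(\alpha) - \gamma$, and handle the two implications separately. Using the uniqueness of $\alpha$'s radix expansion in base $(b,\{0,\pm m\})$ together with the digit identity $d_j - d_j' = \alpha_j$ for $d_j, d_j' \in \{0,m\}$, one extracts the symbolic description
\[
E = \bigl\{\pi(e_j)_{j=1}^{\infty} : e_j \in E_j\bigr\},\quad E_j = \begin{cases}\{0,m\}, & \alpha_j = 0,\\ \{0\}, & \alpha_j \neq 0,\end{cases}
\]
so that the branching pattern of $E$ is recorded exactly by $(a_j) = (m - |\alpha_j|)$. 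Since translations conjugate similarity IFSes into similarity IFSes, $C(\alpha)$ is self-similar if and only if $E$ is, and the distinguished map $f(x) = rx + (1-r)\gamma$ becomes $\tilde f(y) = ry$ on $E$ with fixed point $0 \in E$.

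For the sufficient direction, assume $(a_j)$ is SEP with initial block $(b_\ell)_{\ell=1}^{p}$ and periodic block $(b_\ell + c_\ell)_{\ell=1}^{p}$, and set $V = \bigl\{\sum_{\ell=1}^{p} e_\ell b^{-\ell} : e_\ell \in E_\ell\bigr\}$ and $V' = \bigl\{\sum_{\ell=1}^{p} e_\ell b^{-\ell} : e_\ell \in E_{p+\ell}\bigr\}$. The domination $c_\ell \geq 0$ yields $V \subseteq V'$, and the shifted Moran set $E^{(p)}$ is the attractor of the length-$p$ IFS $\{\phi_{v'}(y) = v' + b^{-p} y : v' \in V'\}$, with $E = \bigcup_{v\in V}(v + b^{-p} E^{(p)})$. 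The plan is to iterate the decomposition $E^{(p)} = E \cup \bigcup_{w\in V'\setminus V}(w + b^{-p} E^{(p)})$ and repackage the output as the attractor of a finite IFS of similarities of ratio $|b|^{-kp}$ for a suitable $k$; the inclusion $V \subseteq V'$ makes the geometric tail of each substitution converge inside $E$ itself, so that finitely many similarities suffice.

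For the necessary direction, let the full IFS on $E$ be $\{\tilde f_1, \ldots, \tilde f_N\}$ with $\tilde f_1 = \tilde f$. The inclusion $rE \subseteq E$ forces $r \cdot m b^{-j}$ to admit a $\{0, m\}^{\mathbb{N}}$-coding in base $b$ for every $j$ with $a_j = m$; the classification of coincident radix expansions recalled in Theorem~\ref{thm:gil3} and the authors' refinements of it pin $r$ down to the form $b^{-p}$ for a positive integer $p$. The symbolic translation of $b^{-p}E \subseteq E$ is exactly $a_{j+p} \geq a_j$ for all $j$, and combining this with the finiteness of the IFS (which forces the collection of tail-types appearing in the branching tree of $E$ to stabilize into finitely many orbits) yields eventual periodicity; the domination inequality then promotes eventual periodicity to the SEP form, possibly after replacing $p$ by a multiple.

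The main obstacle is the sufficient-direction construction: producing a finite similarity IFS whose attractor is $E$ rather than $E^{(p)}$ when $V \subsetneq V'$. The naive candidate $\{\phi_v : v \in V\}$ has the wrong attractor, and the iterated-substitution description gives at first only a countable family of similarities; checking that a finite subcollection suffices and that its attractor coincides with $E$ is where the SEP domination $b_\ell \leq b_\ell + c_\ell$ does its real work. A secondary but nonroutine step, essential to the necessity direction, is the reduction of the admissible scalings $r$ to negative integer powers of $b$, which relies on the uniqueness analysis underlying the new proof of Theorem~\ref{thm:gil3}.
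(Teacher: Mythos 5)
Your reduction to $E := C(\alpha) - \gamma$ and the symbolic description $E_j = \{z \in \{0,m\} : z \le m - |\alpha_j|\}$ match the paper's Lemmas~\ref{lem:seqExpress} and~\ref{lem:shiftForm}, but both directions of your argument have genuine gaps. In the sufficient direction you correctly identify the obstacle --- the naive IFS $\{\phi_v\}_{v\in V}$ has the wrong attractor --- but you do not resolve it: iterating $E^{(p)} = E \cup \bigcup_{w \in V'\setminus V}(w + b^{-p}E^{(p)})$ yields a countable substitution scheme, and the finite IFS is never exhibited. The paper's resolution is a digit-splitting device you are missing: since $a_\ell + u_\ell = m - |\alpha_{\ell+p}| \in \{0,m\}$ with $a_\ell, u_\ell \ge 0$, every admissible digit $x_{kp+\ell} \le a_\ell + u_\ell$ decomposes as $y+z$ with $y, z \in \{0,m\}$, $y \le a_\ell$, $z \le u_\ell$; regrouping the series then exhibits $E$ as the attractor of the finite family $h(x) = b^{-p}\bigl(x + \sum_{\ell=1}^{p}(y_\ell b^{p-\ell} + z_\ell b^{-\ell})\bigr)$, all of ratio $b^{-p}$, with no iteration or passage through $E^{(p)}$ needed.

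In the necessary direction you assert that the neighbour classification ``pins $r$ down to the form $b^{-p}$.'' That claim is not justified and is not needed. From $r\cdot mb^{-q} \in E \subset C_{n,D}$ one gets only $r = \sum_{j} r_{1,j}b^{-j+q}$ with $r_{1,j}\in\{0,1\}$, which does not force $r$ to be a single power of $b^{-1}$. The paper sidesteps this entirely: it picks any index $t>q$ with $r_{1,t}=1$ (such $t$ exists because $|r|<1$ rules out $r$ being a nonzero Gaussian integer), applies $g_1(x)=rx$ to the points $(m-|\alpha_s|)b^{-s}$, matches digits via Lemma~\ref{lem:sepOne}, and concludes $m-|\alpha_s| \le m-|\alpha_{s+p}|$ for $p := t-q$. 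Boundedness together with this shift-monotonicity already yields SEP by Lemma~\ref{lem:monoSEP}; your appeal to ``finitely many tail-type orbits'' is both vaguer and unnecessary. As written, neither direction of your proposal is complete.
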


We can also discuss $C_{n, D}$ when $D$ contains more than two elements. We first introduce a version of strong eventual periodicity for sequences of sets. 

\begin{definition} 
A sequence $(A_{j})_{j=1}^{\infty}$ of nonempty subsets of the integers is called \textit{strongly eventually periodic} (SEP) if there exist two finite sequences of sets $(B_{\ell})_{\ell = 1}^{p}$ and $(C_{\ell})_{\ell = 1}^{p}$, where $p$ is a positive integer, such that 
\begin{equation}
(A_{j})_{j=1}^{\infty} = (B_{\ell})\overline{(B_{\ell} + C_{\ell})_{\ell = 1}^{p}}, 
\end{equation}
where $B + C = \{b + c : b\in B, c\in C\}$ and $\overline{(D_{\ell})_{\ell = 1}^{p}}$ denotes the infinite repetition of the finite sequence of sets $(D_{\ell})_{\ell = 1}^{p}$. 
\end{definition}

\begin{theorem}\label{thm:sepGen} 
Fix an integer $n\geq2$, let $b:=-n+i$, and suppose $D\subset\{0, 1, \ldots, n^{2}\}$ is such that $\Delta := D - D$ is sparse. Let $\alpha = \pi(\alpha_{j})_{j=1}^{\infty}$ where $\alpha_{j}\in\Delta$. The set $C(\alpha) := C_{n, D} \cap (C_{n, D} + \alpha)$ is self similar and is the attractor of an IFS of the form $\{f_{i}(x) = b^{-p}x + u_{i}\}$ where $p$ is a positive integer and each $u_{i}$ is a complex number if and only if the sequence of sets $((D\cap (D + \alpha_{j})) - \beta_{j})_{j=1}^{\infty}$ is SEP for some $\beta = \pi(\beta_{j})_{j=1}^{\infty} \in C(\alpha)$. 
\end{theorem}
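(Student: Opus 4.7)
The plan rests on two structural consequences of sparseness anticipated from earlier sections. First, every radix expansion in base $b$ with digits drawn from $\Delta$ is unique, so the coding map $\pi$ restricts to a bijection from $\prod_{j=1}^{\infty} A_j$ onto $C(\alpha)$, where $A_j := D\cap(D+\alpha_j)$. Second, this uniqueness lets us read off digit-wise equalities whenever two sums $\sum\xi_j b^{-j}$ and $\sum\eta_j b^{-j}$ with digits in $\Delta$ agree as complex numbers. Together these turn the geometric question into a purely combinatorial one about the sequence $(A_j)$.

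For the ``SEP implies self-similar'' direction I would argue constructively. Given $\beta\in C(\alpha)$ with coding $(\beta_j)$ and SEP data $(B_\ell)_{\ell=1}^{p}$, $(C_\ell)_{\ell=1}^{p}$, build an IFS of similarities of ratio $b^{-p}$ indexed by pairs $(\omega,c)\in (A_1\times\cdots\times A_p)\times(C_1\times\cdots\times C_p)$. The map $g_{\omega,c}(x)=b^{-p}x+u_{\omega,c}$ has translation
\[
u_{\omega,c}=\sum_{\ell=1}^{p}\omega_\ell b^{-\ell}+\sum_{\ell=1}^{p}\bigl(c_\ell+\beta_{p+\ell}-\beta_\ell\bigr)b^{-(p+\ell)}+\sum_{j>2p}\bigl(\beta_j-\beta_{j-p}\bigr)b^{-j},
\]
chosen so that applying $g_{\omega,c}$ imposes a choice of first $p$ digits while leveraging the SEP identities $A_{p+\ell}=A_\ell+C_\ell+(\beta_{p+\ell}-\beta_\ell)$ and $A_j=A_{j-p}+(\beta_j-\beta_{j-p})$ for $j>2p$. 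Digit-wise checking then confirms $g_{\omega,c}(C(\alpha))\subseteq C(\alpha)$ and that as $(\omega,c)$ varies the images cover $C(\alpha)$. Thus $C(\alpha)$ is the attractor of this IFS, all of whose members are of the form $b^{-p}x+u$.

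For the ``self-similar implies SEP'' direction, suppose $f(x)=b^{-p}x+u$ lies in an IFS whose attractor is $C(\alpha)$. Translate by a fixed $\beta\in C(\alpha)$: set $\widetilde A_j=A_j-\beta_j$, $v=b^{-p}\beta+u-\beta$, and let $(v_j)\in\prod\widetilde A_j$ be the unique coding of $v\in C(\alpha)-\beta$. Expanding $f(\beta')-\beta=b^{-p}(\beta'-\beta)+v$ digit by digit and invoking uniqueness of coding in $\Delta$ yields the one-sided inclusion $\widetilde A_{j-p}+v_j\subseteq\widetilde A_j$ for every $j>p$. To upgrade this to the two-sided identities required by SEP, I would use the covering relation $C(\alpha)=\bigcup_i f_i(C(\alpha))$ together with a diameter argument: under the strong separation furnished by sparseness, $f(C(\alpha))$ has diameter matching that of a level-$p$ cylinder of $C(\alpha)$ and so must coincide with a union of such cylinders. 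This pins $f(C(\alpha))$ down precisely and forces $\widetilde A_j=\widetilde A_{j-p}$ for $j>2p$ together with $\widetilde A_{p+\ell}=\widetilde A_\ell+C_\ell$ on the transition block, giving SEP with $B_\ell=\widetilde A_\ell$.

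The hard part will be the forward direction: upgrading the one-sided inclusion $\widetilde A_{j-p}+v_j\subseteq\widetilde A_j$ into the full two-sided SEP equalities, given that the remaining IFS members are not assumed to have contraction ratio $b^{-p}$ and so contribute nothing uniform in the direction of the periodicity. The crux is the diameter-matching observation combined with the rigidity of unique codings, which together identify $f(C(\alpha))$ with a precise union of level-$p$ cylinders; once that alignment is established, the Minkowski-additive shape of SEP essentially reads off the digits.
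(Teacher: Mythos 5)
Your forward direction is sound and is essentially the paper's argument: sparseness plus Lemma~\ref{lem:seqExpress} identifies $C(\alpha)$ with $\pi(\prod_{j}A_{j})$, and regrouping digits in blocks of $p$ exhibits the IFS of ratio-$b^{-p}$ similarities; the paper merely works with the translate $C(\alpha)-\beta$ instead of folding $\beta$ into the translation parts $u_{\omega,c}$. The converse, however, has a genuine gap at exactly the point you flag as the hard part. From the single map $f(x)=b^{-p}x+u$ you can only extract the one-sided relation $\widetilde A_{j-p}+v_{j}\subseteq\widetilde A_{j}$ (and you should take $\beta$ to be the fixed point of $f$, which lies in $C(\alpha)$ by invariance and compactness, so that $v=0$ and this becomes the genuine inclusion $\widetilde A_{j-p}\subseteq\widetilde A_{j}$; for an arbitrary $\beta$ you do not even get monotonicity of the sets along residue classes). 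But an inclusion $\widetilde A_{\ell}\subseteq\widetilde A_{\ell+p}$ does not produce the Minkowski-sum witnesses $C_{\ell}$ with $\widetilde A_{\ell+p}=\widetilde A_{\ell}+C_{\ell}$: for instance $\{0,4\}\subseteq\{0,4,12\}$, yet $\{0,4,12\}$ is not of the form $\{0,4\}+C$ for any set $C$. Your proposed repair --- that $f(C(\alpha))$ has the diameter of a level-$p$ cylinder and hence must coincide with a union of such cylinders --- fails on both counts: $\diam f(C(\alpha))=|b|^{-p}\diam C(\alpha)$, whereas a level-$p$ cylinder has diameter $|b|^{-p}\diam\pi(\prod_{j>p}A_{j})$, and these differ whenever the digit sets $A_{j}$ vary with $j$; and even matching diameters would not force a compact subset to exhaust a cylinder, since $f(C(\alpha))$ need only be one piece of the cover.

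For comparison, the paper closes this gap by (quietly) proving the converse under the stronger hypothesis that \emph{every} map of the IFS has linear part $b^{-p}$, say $g_{i}(x)=b^{-p}x+u_{i}$ after translating to the fixed point of $g_{1}$. It sets $U_{\ell}:=\{u_{i,\ell+p}:1\le i\le N\}$, where $u_{i,j}$ are the digits of $u_{i}=g_{i}(0)\in C(\alpha)-\beta$, proves $D_{\ell}+U_{\ell}\subseteq D_{\ell+p}$ by the same digit-matching you use, and then upgrades to equality via the covering relation $\bigcup_{i}g_{i}(C(\alpha)-\beta)=C(\alpha)-\beta$ --- a step that genuinely requires all $N$ maps to respect the level-$p$ cylinder structure, since otherwise a proper inclusion $D_{\ell}+U_{\ell}\subsetneq D_{\ell+p}$ would leave part of some cylinder uncovered. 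Lemma~\ref{lem:sepPP} then converts $D_{\ell}\subseteq D_{\ell+p}$ and $D_{\ell}+U_{\ell}=D_{\ell+p}$ into SEP. So your instinct about where the difficulty lies is correct, but the diameter argument does not resolve it; with only one distinguished map the remaining IFS members give you no candidate for $U_{\ell}$ at all, and you would need either the paper's stronger hypothesis or a genuinely new idea.
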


We outline the remaining sections of this paper. 

\begin{enumerate}
\item In Section 2 we set our notation and recall some definitions and results. 
\item In Section 3 we derive the separation conditions present in Theorems~\ref{thm:improvement} and~\ref{thm:sepGen}. 
\item In Section 4 we prove Theorem~\ref{thm:improvement}.
\item In Section 5 we prove Theorems~\ref{thm:sepSp} and~\ref{thm:sepGen} and discuss the Hausdorff and box-counting dimension of $C_{n, D} \cap (C_{n, D} + \alpha)$. 
\end{enumerate} 
\section{\textbf{Background}} \label{section:Background} 

We recall definitions and facts from fractal geometry. There are multiple kinds of fractal dimensions used to measure the complexity of irregular geometric objects. This paper features two of the most popular, the Hausdorff dimension and the box-counting dimension, and also packing dimension. The definition of the box-couting dimension was given in Definition~\ref{def:boxcounting}. We recall the definitions of Hausdorff dimension and packing dimension in Euclidean space. All three dimensions could be defined for more general metric spaces. 

\begin{definition}\label{def:hausDim}
Let $X$ be a subset of $\mathbb{R}^{m}$. Given $s > 0$, the \textit{$s$-dimensional Hausdorff content of $X$} is the quantity
\begin{equation}
\mathcal{H}_{\infty}^{s}(X) := \inf\sum_{i}\diam(U_{i})^{s}
\end{equation}
where the infimum is taken over all countable covers $\{U_{i}\}$ of $X$ by any subsets of $\mathbb{R}^{m}$. 
We call $\dim_{H}X := \inf\{s\geq0:\mathcal{H}_{\infty}^{s}(X) = 0\}$ the \textit{Hausdorff dimension of $X$}. 
\end{definition}

\begin{definition}\label{def:deltaPacking}
Let $E\subset\mathbb{R}^{m}$. For $\delta > 0$, we call a countable (possibly finite) collection of disjoint balls, each with radius less than or equal to $\delta$ and with their center in $E$, a \textit{$\delta$-packing of $E$}. 
\end{definition}

\begin{definition}\label{def:packingDim}
Let $X$ be a subset of $\mathbb{R}^{m}$. Given positive numbers $s$ and $\delta$, and any subset $E\subset\mathbb{R}^{m}$, define the quantity
\begin{equation}
\mathcal{P}_{\delta}^{s}(E) := \sup\sum_{i=1}^{\infty}\diam(B_{i})^{s}
\end{equation}
where the supremum is taken over all $\delta$-packings of $E$. Let $\mathcal{P}_{0}^{s}(E) := \lim_{\delta\rightarrow0^{+}}P_{\delta}^{s}(E)$. The \textit{$s$-dimensional packing measure of $X$} is the quantity
\begin{equation}
\mathcal{P}^{s}(X) := \inf\bigg\{\sum_{i=1}^{\infty}\mathcal{P}_{0}^{s}(X_{i}): X \subset \cup_{i=1}^{\infty}X_{i}\bigg\}. 
\end{equation}
We call $\dim_{P}X := \inf\{s\geq0:\mathcal{P}^{s}(X) = 0\}$ the \textit{packing dimension of $X$}.
\end{definition}

For all bounded subsets of $X\subset\mathbb{R}^{m}$, we have 
\begin{equation}\label{eq:dimRel} 
\dim_{H}X \leq \dim_{P}X \leq \overline{\dim_{B}}X \;\; \text{and} \;\; \dim_{H}X \leq \underline{\dim_{B}}X \leq \overline{\dim_{B}}X.
\end{equation}
See section 3.4 of \cite{F90} for details. For self-similar sets, such as the middle third Cantor set, all four notions of dimension agree (corollary 3.3 in \cite{F97}). In particular, if a self-similar set is the attractor of an IFS that satisfies a type of separation condition, then not only do all the dimensions agree but they are easily computable. We recall one of those separation conditions. 

\begin{definition}\label{def:strongSep} 
Suppose $A$ is the attractor of an IFS given by $\mathcal{F} = \{f_{i}\}_{i=1}^{N}$. We say that $\mathcal{F}$ satisfies the \textit{strong separation condition} (SSC) if the images $f_{i_{1}}(A)$ and $f_{i_{2}}(A)$ are disjoint for every distinct pair $1 \leq i_{1}, i_{2} \leq N$. 
\end{definition}

\begin{theorem}[K. Falconer, \cite{F97}, corollary 3.3] \label{thm:strongSimDim} 
Suppose $A$ is the attractor of the set of similarities $\mathcal{F} = \{f_{i}\}_{i=1}^{N}$. For each $i$, let $r_{i}$ be the contraction ratio of $f_{i}$. If $\mathcal{F}$ satisfies the SSC, then $\dim_{B}A = \dim_{H}A = \dim_{P}A = s$ where $s$ is the unique positive solution to $\sum_{i=1}^{N}r_{i}^{s} = 1$. 
\end{theorem}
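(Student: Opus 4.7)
The plan is to establish $s$ as both an upper bound on $\overline{\dim}_{B}A$ and a lower bound on $\dim_{H}A$; the chain of inequalities in (\ref{eq:dimRel}) then forces all four dimensions to equal $s$.

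For the upper bound I would iterate the identity $A = \bigcup_{i=1}^{N} f_{i}(A)$ to obtain $A = \bigcup_{\mathbf{i} \in \{1,\ldots,N\}^{k}} f_{\mathbf{i}}(A)$, where $f_{\mathbf{i}} := f_{i_{1}} \circ \cdots \circ f_{i_{k}}$ is a similarity with ratio $r_{\mathbf{i}} := r_{i_{1}} \cdots r_{i_{k}}$, so $\diam(f_{\mathbf{i}}(A)) = r_{\mathbf{i}} \diam(A)$. The defining equation $\sum_{i} r_{i}^{s} = 1$ yields by induction the identity $\sum_{|\mathbf{i}|=k} r_{\mathbf{i}}^{s} = 1$ for every $k$, which immediately gives $\mathcal{H}^{s}_{\infty}(A) \leq \diam(A)^{s}$, hence $\dim_{H}A \leq s$. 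To control $\overline{\dim}_{B}A$, I would use a stopping-time family: fix $\delta > 0$ and collect those words $\mathbf{i}$ with $r_{\mathbf{i}} \leq \delta/\diam(A)$ but $r_{\mathbf{i}|_{|\mathbf{i}|-1}} > \delta/\diam(A)$. The corresponding cylinders cover $A$, each has diameter at most $\delta$, and summing the same identity one level before the stop gives a count bounded by $(\min_{i} r_{i})^{-s} \diam(A)^{s} \delta^{-s}$, hence $\overline{\dim}_{B}A \leq s$.

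For the lower bound I would construct the self-similar probability measure $\mu$ on $A$ by assigning $\mu(f_{\mathbf{i}}(A)) = r_{\mathbf{i}}^{s}$; consistency of this assignment is exactly the identity $\sum_{i} r_{i}^{s} = 1$. The SSC supplies $\rho := \min_{i \neq j} d(f_{i}(A), f_{j}(A)) > 0$, and conjugating by $f_{\mathbf{i}}$ shows that for every word $\mathbf{i}$ the children $f_{\mathbf{i} i}(A)$ are pairwise separated by at least $\rho \, r_{\mathbf{i}}$. Given $x \in A$ and small $r > 0$, I would take the stopping-time family of cylinders satisfying $r_{\mathbf{i}} \leq r/\diam(A) < r_{\mathbf{i}|_{|\mathbf{i}|-1}}$ and show that at most a constant number $K$, depending only on $\rho$, $\diam(A)$ and $\min_{i} r_{i}$, can meet $B(x,r)$: such cylinders are images under a common ancestor map of sets pairwise separated by a definite multiple of $r$. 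This yields the Frostman estimate $\mu(B(x,r)) \leq K \bigl(r/\diam(A)\bigr)^{s}$, and the mass distribution principle then forces $\dim_{H}A \geq s$.

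Combining the two bounds with (\ref{eq:dimRel}) pinches all of $\dim_{H}A$, $\underline{\dim}_{B}A$, $\overline{\dim}_{B}A$, and $\dim_{P}A$ equal to $s$. The main obstacle is the Frostman step: turning the purely set-theoretic SSC into a uniform bound on how many same-generation cylinders can crowd into an arbitrary small ball. The upper-bound calculations and the well-definedness of $\mu$ are routine once the identity $\sum_{|\mathbf{i}|=k} r_{\mathbf{i}}^{s} = 1$ is established.
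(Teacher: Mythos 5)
The paper does not prove this statement; it is quoted verbatim from Falconer (corollary 3.3 of \cite{F97}), and the only original content the paper adds is the remark that the packing-dimension equality follows from the cited box/Hausdorff statement via the first inequality in (\ref{eq:dimRel}). Your argument is correct and is essentially the standard proof of that cited result (Falconer's theorem 9.3 in \cite{F90}, specialized to the SSC): the upper bound by stopping-time covers of cylinders together with the multiplicativity $\sum_{|\mathbf{i}|=k} r_{\mathbf{i}}^{s} = 1$, and the lower bound by the self-similar measure and the mass distribution principle. The SSC actually makes your job easier than Falconer's, since the OSC version needs a geometric lemma counting disjoint open sets of comparable diameter meeting a ball, whereas you get the separation constant $\rho$ directly from compactness and can conclude that distinct stopped cylinders are $\rho r/\diam(A)$-separated via their last common ancestor. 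The two points you flag as requiring care --- the well-definedness of $\mu$ (immediate under SSC since the coding map is a bijection, so $\mu$ is the pushforward of the Bernoulli measure with weights $r_{i}^{s}$) and the uniform bound $K$ on stopped cylinders meeting $B(x,r)$ (a volume/packing count of $\rho r/\diam(A)$-separated points in $B(x,2r)$) --- both go through, so there is no gap.
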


It is common to state a stronger version of this theorem (see theorem 9.3 of \cite{F90}). It replaces the SSC with a weaker notion called the \textit{open set condition} (OSC). An IFS given by $\{f_{i}:X\rightarrow X\}_{i=1}^{N}$ satisfies the OSC if there exists an open set $\mathcal{O}\subset X$ such that $\cup_{i=1}^{N}f_{i}(\mathcal{O}) \subset \mathcal{O}$. We also remark that the cited statements in \cite{F90} and \cite{F97} do not mention the packing dimension, but the first inequality in (\ref{eq:dimRel}) implies the assertion immediately.

\section{\textbf{Neighbouring Tiles}} \label{sec:neighbours} 

In this section we derive the bounds present in Theorem~\ref{thm:improvement} and Theorem~\ref{thm:sepGen}. We begin with two definitions. 

\begin{definition} 
Let $Y\subset\mathbb{C}$. We call any Gaussian integer $s$ that satisfies
\begin{equation} \label{eq:neighbourcondition}
Y \cap (Y+s) \neq \emptyset
\end{equation}
a neighbour of $Y$. 
\end{definition}

\begin{definition}\label{def:fundTile} 
Fix an integer $n \geq 2$ and set $b := -n+i$. We call the attractor of the IFS given by $\{f_{d}(x) = b^{-1}(x + d) : d\in\{0, 1, \ldots, n^{2}\}\}$ the \textit{$n$th fundamental tile} and denote it by $T_{n}$. Similarly, we call the attractor of the IFS given by $\{f_{d}(x) = b^{-1}(x+\delta) : d\in\{0, \pm1, \ldots, \pm n^{2}\}\}$ the \textit{$n$th extended tile} and denote it by $\mathcal{E}_{n}$. 
\end{definition}

Our goal is to find the real neighbours of $T_{n}$ and $\mathcal{E}_{n}$ for $n\geq2$. To this end, we prove the following lemma. 

\begin{lemma}\label{lem:reimbound} 
Fix an integer $n \geq 2$. Suppose $s$ is a neighbour of $T_{n}$. Then 
\begin{equation}
|\Re(s)-n\Im(s)| < 2.
\end{equation}
Moreover $|\Re(s)-n\Im(s)| < 3/2$ for $n\geq5$. 
\end{lemma}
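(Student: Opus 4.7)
The plan is to convert $\phi(s):=\Re(s)-n\Im(s)$ into the imaginary part of a point of the extended tile $\mathcal{E}_n$ via a clean multiplicative identity, and then bound that imaginary part using the geometric decay of $|b^{-k}|$.

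A neighbour $s$ satisfies $s=y-x$ for some $x,y\in T_n$, so $s\in T_n-T_n=\mathcal{E}_n$ and admits an expansion $s=\sum_{j\ge1}\delta_j b^{-j}$ with $\delta_j\in\{-n^2,\ldots,n^2\}$. Direct calculation gives the key identity
\[
(1+ni)\,b^{-1}\;=\;(1+ni)\cdot\frac{-n-i}{n^2+1}\;=\;-i,
\]
so that $(1+ni)b^{-j}=-i\cdot b^{-(j-1)}$ for every $j\ge1$. Summing and splitting off the $j=1$ term,
\[
(1+ni)\,s\;=\;-i(\delta_1+t),\qquad t:=\sum_{k\ge1}\delta_{k+1}\,b^{-k}\in\mathcal{E}_n.
\]
Since $\phi(z)=\Re\bigl((1+ni)z\bigr)$ and $\delta_1\in\mathbb{R}$, taking real parts gives $\phi(s)=\Im(\delta_1+t)=\Im(t)$, reducing the problem to bounding $|\Im(t)|$ for $t\in\mathcal{E}_n$.

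From the series for $t$, $|\Im(t)|\le n^2\sum_{k\ge1}|\Im(b^{-k})|$. Writing $b^{-k}=(n^2+1)^{-k/2}e^{ik(\pi+\arctan(1/n))}$ yields $|\Im(b^{-k})|=|\sin(k\arctan(1/n))|/(n^2+1)^{k/2}$, which decays geometrically; in particular the $k=1$ term contributes $n^2/(n^2+1)<1$. Estimating $|\sin|\le 1$ and summing a geometric tail then gives $n^2\sum|\Im(b^{-k})|<3/2$ when $n\ge5$, and the analogous computation for $n\ge3$ gives $n^2\sum|\Im(b^{-k})|<2$. The closed-form identity
\[
\sum_{k\ge1}\Im(b^{-k})\;=\;\Im\!\Bigl(\frac{1}{b-1}\Bigr)\;=\;-\frac{1}{(n+1)^2+1},
\]
obtained by summing the geometric series, pins down the signed series and is a useful sanity check when tightening the estimate.

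The main technical obstacle is making the estimate sharp enough for the smallest $n$, where the crude bound $|\Im(b^{-k})|\le(n^2+1)^{-k/2}$ is loose because $(n^2+1)^{-1/2}$ is close to $1$. There I would compute the first several values of $\Im(b^{-k})$ exactly via the integer recursion induced by $b^{-(k+1)}=b^{-1}\cdot b^{-k}$ (giving rationals with denominator $(n^2+1)^k$) and bound only the deep tail geometrically. The integrality of $\phi(s)$, which follows from $s$ being a Gaussian integer, then upgrades the resulting near-threshold continuous estimate into the strict inequality stated in the lemma.
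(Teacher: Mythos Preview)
Your identity $(1+ni)b^{-1}=-i$ is correct and elegant: it cleanly explains why the $\delta_1$ term drops out and shows that the coefficient of $\delta_{k+1}$ in $\Re(s)-n\Im(s)$ is exactly $\Im(b^{-k})$. This is precisely what the paper computes by brute force in equations (\ref{eq:neighbourreal})--(\ref{eq:realimagdiff}), so the two approaches are the same calculation packaged differently; yours is the tidier presentation.

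The numerical claim in your middle paragraph, however, is wrong. Keeping the $k=1$ term exact and bounding the rest by $|\Im(b^{-k})|\le (n^{2}+1)^{-k/2}$ gives
\[
\frac{n^{2}}{n^{2}+1}\;+\;\frac{n^{2}}{\sqrt{n^{2}+1}\,\bigl(\sqrt{n^{2}+1}-1\bigr)},
\]
and the second summand tends to $1$ as $n\to\infty$ (it is already $\approx 1.2$ at $n=5$), so the total is $\approx 2$ for all $n$; it is never below $3/2$, and it is not below $2$ for $n=3,4$. Thus the crude tail estimate does not establish either inequality of the lemma for any $n$. Your final paragraph's plan --- compute several $\Im(b^{-k})$ exactly and bound only the deep tail --- is the correct fix, and it is exactly what the paper does (four exact terms, then direct evaluation at $n=3,4,5,6$ and a monotone estimate for $n\ge 7$). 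You should expect to need at least three exact terms to reach $<3/2$ at $n=5$; this is not a ``smallest $n$'' issue but is required across the board.

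One further remark on integrality: for the lemma as stated your observation is fine, since $\phi(s)\in\mathbb{Z}$ makes $|\phi(s)|<2$ and $|\phi(s)|<3/2$ equivalent. But the paper's reason for recording the sharper bound $<3/2$ is Corollary~\ref{cor:realNeighbours}, where the digit range is doubled and the bound becomes $<3$ for neighbours of $\mathcal{E}_n$; there the distinction between $<3$ and $<4$ matters. So integrality does not let you sidestep the sharper real-valued estimate --- you still need the quantitative $<3/2$ (equivalently, enough exact terms) to feed into that corollary.
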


\begin{proof}
Let $s$ be a neighbour of $T = T_{n}$. For convenience, we set $\alpha := \Re(s)$ and $\beta := \Im(s)$. By definition there exist sequences $(d_{j})_{j=1}^{\infty}$ and $(d_{j}^{'})_{j=1}^{\infty}$ with entries in $\{0, 1, \ldots, n^{2}\}$ such that 
\begin{equation}
s + \pi(d_{j})_{j=1}^{\infty} = \pi(d_{j}^{'})_{j=1}^{\infty}.
\end{equation}
Isolating for $s$ yields 
\begin{equation}
s = \pi(\delta_{j})_{j=1}^{\infty}
\end{equation}
where $\delta_{j}\in\{0, \pm 1, \ldots, \pm n^{2}\}$ for each $j$. 

We wish to estimate the difference between the real part of $s$ and $n$ times its imaginary part. We explicitly compute the first few terms of $s$ in terms of $n$. Observe that 
\begin{equation}
s = \frac{\delta_{1}}{b} +  \frac{\delta_{2}}{b^{2}} + \frac{\delta_{3}}{b^{3}} + \frac{\delta_{4}}{b^{4}} + \epsilon
\end{equation}
where $\epsilon$ denotes the tail $\sum_{j=5}^{\infty}\delta_{j}b^{-j}$.

Explicit computation yields
\begin{equation} \label{eq:neighbourreal} 
\alpha = \frac{-n}{n^{2}+1}\delta_{1} +  \frac{n^{2}-1}{(n^{2}+1)^{2}}\delta_{2} + \frac{-n^{3}+3n}{(n^{2}+1)^{3}}\delta_{3} + \frac{n^{4}-6n^{2}+1}{(n^{2}+1)^{4}}\delta_{4} + \Re(\epsilon) \\ 
\end{equation}
and 
\begin{equation} \label{eq:neighbourimag} 
\beta = \frac{-1}{n^{2}+1}\delta_{1} + \frac{2n}{(n^{2}+1)^{2}}\delta_{2} + \frac{-3n^{2}+1}{(n^{2}+1)^{3}}\delta_{3} + \frac{4n^{3}-4n}{(n^{2}+1)^{4}}\delta_{4} + \Im(\epsilon). \\
\end{equation}
Subtracting $n$ times (\ref{eq:neighbourimag}) from (\ref{eq:neighbourreal}) yields
\begin{equation} \label{eq:realimagdiff} 
\alpha - n\beta = \frac{-1}{n^{2}+1}\delta_{2} + \frac{2n}{(n^{2}+1)^{2}}\delta_{3} + \frac{-3n^{4}-2n^{2}+1}{(n^{2}+1)^{4}}\delta_{4} + \Re(\epsilon) - n\Im(\epsilon). 
\end{equation}
Recall that $\delta_{j}$ range from $-n^{2}$ to $n^{2}$. In order to bound $|\alpha - n\beta|$ with an expression that is only in terms of $n$, we maximize the sum of the first three terms of (\ref{eq:realimagdiff}) by choosing $\delta_{2} = \delta_{4} = -n^{2}$ and $\delta_{3} = n^{2}$ and estimate the absolute value of the last two terms using the bound $|Re(\epsilon) - n\Im(\epsilon)| \leq (n+1)\sum_{j=5}^{\infty}n^{2}|b|^{-j} = (n+1)(|b|+1)/|b|^{4}$. This results in the inequality
\begin{equation} \label{eq:boundnonly} 
|\alpha - n\beta| \leq \frac{n^{2}}{n^{2}+1} + \frac{2n^{3}}{(n^{2}+1)^{2}} + \frac{3n^{6}+2n^{4}-n^{2}}{(n^{2}+1)^{4}} + \frac{(n+1)(\sqrt{n^{2}+1}+1)}{(n^{2}+1)^{2}}
\end{equation}
Direct computation with $n = 3, 4, 5, 6$ yields bounds less than $1.85, 1.63, 1.5,$ and $1.41$.
For $n \geq 7$, observe that we can respectively bound each term of (\ref{eq:boundnonly}) by the following sequences. 
\begin{align}
\frac{n^{2}}{n^{2}+1} &< 1, \label{eq:firstbound}\\ 
\frac{2n^{3}}{(n^{2}+1)^{2}} &< \frac{2}{n}, \\
\frac{3n^{6}+2n^{4}-n^{2}}{(n^{2}+1)^{4}} &< \frac{5}{n^{2}}, \\
 \frac{(n+1)(\sqrt{n^{2}+1}+1)}{(n^{2}+1)^{2}} &< \frac{5}{n^{2}}. \label{eq:lastbound}
\end{align}
Consider the sum of all the sequences on the right hand side of a ``$<$" sign from (\ref{eq:firstbound}) to (\ref{eq:lastbound}). The sum is a strictly decreasing sequence and at $n = 7$ is less than $1.49$. This completes the proof. 
\end{proof}

\begin{corollary}\label{cor:realNeighbours} 
Fix an integer $n\geq 2$. The set of real neighbours of $\mathcal{E}_{n}$ is
\begin{itemize}
\item[(i)] $\{0, \pm1, \pm2\}$ when $n=1$ or $n\geq5$ .
\item[(ii)] $\{0, \pm1, \pm2, \pm3\}$ when $n=2, 3,$ or $4$.
\end{itemize}
Additionally, the set of real neighbours of $T_{n}$ is $\{0, \pm 1\}$ for all $n\geq1$. 
\end{corollary}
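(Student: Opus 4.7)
The idea is to use Lemma~\ref{lem:reimbound} to give sharp upper bounds on the candidates, and then verify each candidate by constructing explicit common points. For $T_n$, any Gaussian integer neighbour $s$ satisfies $|\Re(s)-n\Im(s)|<2$ by Lemma~\ref{lem:reimbound}, so a real integer $s=\alpha$ obeys $|\alpha|<2$ and lies in $\{-1,0,1\}$. For $\mathcal{E}_n$ I use that $\mathcal{E}_n=T_n-T_n$ (immediate from Definition~\ref{def:ExtDigitSet} and the linearity of $\pi$) to conclude $\mathcal{E}_n-\mathcal{E}_n$ is the attractor of the IFS with digit set $\{-2n^2,\dots,2n^2\}$; equivalently, any $s\in\mathcal{E}_n-\mathcal{E}_n$ has an expansion $s=\pi(\eta_j)$ with $|\eta_j|\leq 2n^2$. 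Rerunning the estimate~(\ref{eq:boundnonly}) with this doubled digit range doubles every summand on the right-hand side, giving $|\Re(s)-n\Im(s)|<4$ for $n\geq 2$ and $|\Re(s)-n\Im(s)|<3$ for $n\geq 5$. Restricting to $\Im(s)=0$ yields the candidate lists $\{0,\pm 1,\pm 2,\pm 3\}$ and $\{0,\pm 1,\pm 2\}$ respectively. The case $n=1$ lies outside the scope of Lemma~\ref{lem:reimbound} and would be handled directly using the well-studied twin-dragon geometry of $T_1$.

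For the reverse inclusions, note that $T_n\subseteq\mathcal{E}_n$ (since $0\in T_n$), so every neighbour of $T_n$ is a neighbour of $\mathcal{E}_n$. The pair $\pm 1$ is a neighbour of $T_n$ because Theorem~\ref{thm:radixExist} implies $T_n$ is a self-affine tile of $\mathbb{C}$ under Gaussian integer translations, so the horizontally adjacent copies $T_n$ and $T_n\pm 1$ must share boundary points. This in particular places $\pm 1$ inside $\mathcal{E}_n=T_n-T_n$, whence $\pm 2=\pm 1-(\mp 1)\in\mathcal{E}_n-\mathcal{E}_n$, so $\pm 2$ is a neighbour of $\mathcal{E}_n$.

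The main obstacle is to verify $\pm 3\in\mathcal{E}_n-\mathcal{E}_n$ for $n\in\{2,3,4\}$. Since $|\Re(2)-n\cdot 0|=2$ violates the strict bound of Lemma~\ref{lem:reimbound}, we have $\pm 2\notin\mathcal{E}_n$ and the shortcut $3=2-(-1)$ is unavailable. My approach would be to exhibit, for each of the three small values of $n$, an explicit eventually periodic radix expansion $3=\pi(\eta_j)$ with $\eta_j\in\{-2n^2,\dots,2n^2\}$, the digit set for which $\mathcal{E}_n-\mathcal{E}_n$ is the attractor; since $2n^2\in\{8,18,32\}$ is much larger than $3$, there is ample room for such an expansion to be located by direct inspection. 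The expansion for $-3$ then follows from the symmetry of the digit set.
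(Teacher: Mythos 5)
Your upper-bound direction is essentially the paper's: Lemma~\ref{lem:reimbound} confines the real neighbours of $T_n$ to $\{0,\pm1\}$, and identifying neighbours of $\mathcal{E}_n$ with elements of $\mathcal{E}_n-\mathcal{E}_n=\pi(\{-2n^2,\ldots,2n^2\}^{\mathbb{N}})$ and then doubling the bound (\ref{eq:boundnonly}) is exactly what the paper does. Your derivation of $\pm2$ as a neighbour of $\mathcal{E}_n$ from $\pm1\in\mathcal{E}_n$ (via $2=1-(-1)\in\mathcal{E}_n-\mathcal{E}_n$) is slicker than the paper's explicit identity $0.0\overline{(-n^{2})(n-2)^{2}} = 2.(4n-2)\overline{(n-2)^{2}(-n^{2})}$. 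However, the reverse inclusions contain a genuine gap. Your only argument that $\pm1$ is a neighbour of $T_n$ is that $T_n$ tiles $\mathbb{C}$ under $\mathbb{Z}[i]$-translations, so ``horizontally adjacent copies must share boundary points.'' That implication is false for general compact fundamental domains: the staircase $F=\bigcup_{j=0}^{k-1}\left[\tfrac{j}{k},\tfrac{j+1}{k}\right]\times[j,j+1]$ tiles $\mathbb{R}^{2}$ under $\mathbb{Z}^{2}$, yet $F\cap(F+(1,0))=\emptyset$ for $k\geq3$. So the tiling property alone cannot deliver $\pm1$; one needs information specific to $T_n$. The paper supplies it with the explicit periodic radix identity $1 + 0.(2n-1)\overline{((n-1)^{2}+1)0} = 0.0\overline{0((n-1)^{2}+1)}$, verified by solving for the periodic tails as rational functions of $b=-n+i$. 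Since your $\pm2$ step is downstream of $\pm1$, it inherits this gap.

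Second, the cases you defer are not actually carried out. Exhibiting expansions of $\pm3$ in base $(b,\{-2n^2,\ldots,2n^2\})$ for $n=2,3,4$, and handling $n=1$ via ``twin-dragon geometry,'' are promissory notes; ``ample room for direct inspection'' is not an argument, because membership of a specific Gaussian integer in $\pi(\{-2n^2,\ldots,2n^2\}^{\mathbb{N}})$ --- an expansion with \emph{no} integer part, as opposed to the expansions guaranteed by Theorem~\ref{thm:radixExist} --- is precisely the nontrivial neighbour question you are trying to settle. The paper discharges these finitely many small cases by invoking the neighbour-finding algorithm of \cite{ST03}; you would need to do the same or actually produce the expansions.
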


\begin{proof}
The set of neighbours of $T_{n}$ when $n = 1, 2$ and the set of neighbours of $\mathcal{E}_{n}$ when $n = 1, 2, 3$ or $4$ can be explicitly computed using the neighbour finding algorithm found in \cite{ST03}. We prove the remaining cases. 

It follows immediately from Lemma~\ref{lem:reimbound} that the real neighbours of $T_{n}$ must be in $\{0, \pm 1\}$. To see the converse, it can be verified by direct computation that 
\begin{align}
1 + 0.(2n-1)\overline{((n-1)^{2}+1)0} &= 0.0\overline{0((n-1)^{2}+1)}, \label{eq:examplecomputation}\\
-1 + 0.0\overline{0((n-1)^{2}+1)} &= 0.(2n-1)\overline{((n-1)^{2}+1)0},
\end{align}
where the bar indicates infinite repetition of those digits in the order presented. We include the verification of (\ref{eq:examplecomputation}).

Let 
\begin{align}
z_{1} := 0.\overline{((n-1)^{2}+1)0}, \\
z_{2} := 0.\overline{0((n-1)^{2}+1)}. 
\end{align}
Since the sequences of digits are periodic with period two and letting $b := -n+i$, 
\begin{align}
b^{2}z_{1} - z_{1} &= ((n-1)^{2}+1)b, \\
b^{2}z_{2} - z_{2} &=  (n-1)^{2} + 1. 
\end{align}
From these equations it is possible to solve for $z_{1}$ and $z_{2}$ explicitly in terms of $n$. We obtain
\begin{align}
z_{1} &=  \frac{((n-1)^{2}+1)(-n+i)}{(n^{2}-2-2ni)} \\
z_{2} &=  \frac{((n-1)^{2}+1)}{(n^{2}-2-2ni)}.
\end{align}
We wish to show that 
\begin{equation}\label{eq:goal}
\frac{((n-1)^{2}+1)}{(n^{2}-2-2ni)} + \frac{2n-1}{-n+i} + 1 =  \frac{((n-1)^{2}+1)}{(n^{2}-2-2ni)(-n+i)}
\end{equation}
Observe that on the left hand side of (\ref{eq:goal}), after bringing it under a common denominator, the numerator is 
\begin{equation}
((n-1)^{2}+1)(-n+i) + (2n-1)(n^{2}-2-2ni)+(-n+i)(n^{2}-2-2ni). 
\end{equation}
It now can be seen from the right hand side of (\ref{eq:goal}) that it is sufficient to show that 
\begin{equation}
(2n-1)(n^{2}-2-2ni)+(-n+i)(n^{2}-2-2ni) = (1 + n - i)((n-1)^{2}+1). 
\end{equation}
We conclude with
\begin{align}
&(2n-1)(n^{2}-2-2ni)+(-n+i)(n^{2}-2-2ni) \\
&= (n - 1 + i)(n^{2}-2-2ni) \\
&= ((n-1)^{2}+1)(n^{2}-2-2ni)/(n - 1 - i) \\
&= ((n-1)^{2}+1)(n + 1 - i).
\end{align}

Now we consider the $n$th extended tile. If $p$ is a neighbour of $\mathcal{E}_{n}$, then $p = \pi(p_{j})_{j=1}^{\infty}$ where $p_{j}\in\{0, \pm 1, \ldots, \pm (2n^{2}-1), \pm (2n^{2})\}$. It follows that the function bounding $|\Re(s)-\Im(s)|$ in (\ref{eq:boundnonly}) merely needs to be doubled in order to bound $|\Re(p)-n\Im(p)|$. Therefore $|\Re(p)-n\Im(p)| < 3$ and the real neighbours of $\mathcal{E}_{n}$ are contained in $\{0, \pm 1, \pm 2\}$.  To see that $2$ and $-2$ are neighbours of $\mathcal{E}_{n}$ for $n\geq5$, it can be verified directly that $0.0\overline{(-n^{2})(n-2)^{2}} = 2.(4n-2)\overline{(n-2)^{2}(-n^{2})}$. 
\end{proof}

We end this section by demonstrating a further application of Lemma~\ref{lem:reimbound}. Gilbert gave a proof of the following result in \cite{G82}. 
\begin{theorem}[W. Gilbert, \cite{G82}, proposition 1]\label{thm:neighbourset}
A Gaussian integer $s$ is a neighbour of $T_{n}$ if and only if
\begin{itemize}
\item[(i)] $s\in\{0, \pm1, \pm (n-1+i), \pm (n+i)\}$ and $n\geq3$.
\item[(ii)] $s\in\{0, \pm1, \pm (1+i), \pm (2+i), \pm i, \pm (2+2i)\}$ and $n=2$. 
\end{itemize}
\end{theorem}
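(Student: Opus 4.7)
The plan is to combine the bound from Lemma~\ref{lem:reimbound} with the self-similar decomposition $T_n = \bigcup_{d=0}^{n^2} f_d(T_n)$: together these produce a short necessary list of candidate Gaussian integer neighbours, and then I certify each member of the list by exhibiting an explicit periodic orbit under the induced iteration $s \mapsto bs + (d' - d)$ on neighbours. The case $n = 2$ can be settled by running the Scheicher--Thuswaldner algorithm \cite{ST03} as in the proof of Corollary~\ref{cor:realNeighbours}, so I focus on $n \geq 3$.

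For the necessary direction, write a neighbour as $s = \alpha + i\beta$. Lemma~\ref{lem:reimbound} forces $r := \alpha - n\beta \in \{-1, 0, 1\}$, so $s = n\beta + r + i\beta$. The self-similar decomposition shows that if $s$ is a neighbour then $s_1 := bs + (d' - d)$ is a neighbour for some $d, d' \in \{0, \ldots, n^2\}$. Direct computation gives $bs = -(n^2+1)\beta - nr + ir$, so $\Im(s_1) = r$, and applying Lemma~\ref{lem:reimbound} to $s_1$ forces the digit difference $\delta := d' - d$ to satisfy $|\delta - (n^2+1)\beta - 2nr| < 2$. Combined with $|\delta| \leq n^2$ this yields $|(n^2+1)\beta + 2nr| \leq n^2 + 1$. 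A short case analysis on $(\beta, r)$ shows $|\beta| \leq 1$, and at $|\beta| = 1$ the pair in which $\beta$ and $r$ share sign is also excluded, leaving exactly $\{0, \pm 1, \pm(n+i), \pm(n-1+i)\}$.

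For the sufficient direction, I exhibit each remaining candidate as an actual neighbour. The values $0, \pm 1$ are handled by Corollary~\ref{cor:realNeighbours}. For $\pm(n-1+i)$ I use the $2$-cycle $n-1+i \to -(n-1+i) \to n-1+i$ with digit differences $((n-1)^2+1, -(n-1)^2-1)$, and for $\pm(n+i)$ the $4$-cycle $n+i \to -1 \to -(n+i) \to 1 \to n+i$ with digit differences $(n^2, -2n, -n^2, 2n)$. In each cycle every $\delta_j$ admits a decomposition $\delta_j = d_j' - d_j$ with $d_j, d_j' \in \{0, \ldots, n^2\}$ (using $2n \leq n^2$ for $n \geq 3$), so the periodic sequences $(d_j)$ and $(d_j')$ define radix expansions of points $x, y \in T_n$, and the telescoping identity $s = -\sum_{j=0}^\infty \delta_j b^{-(j+1)}$ (whose tail $b^{-k} s_k$ vanishes because $|b| > 1$ and the $s_k$ cycle through finitely many values) yields $x - y = s$. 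The negatives follow from the involution that $s$ is a neighbour of $T_n$ if and only if $-s$ is.

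The main obstacle is the bookkeeping in the necessary direction, where the integrality of $\delta$ together with $|\delta| \leq n^2$ must be leveraged to eliminate every $|\beta| \geq 2$ simultaneously with the single residual same-sign pair at $|\beta| = 1$. The verification in the sufficient direction is essentially mechanical once the correct cycles have been spotted, since the cycles themselves are discovered by following the forward iteration from each candidate and using the constraint $|\delta| \leq n^2$ to prune the tree of possibilities.
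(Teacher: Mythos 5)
Your proof is correct, and while it rests on the same two pillars as the paper's --- the bound $|\Re(s) - n\Im(s)| < 2$ of Lemma~\ref{lem:reimbound} and the shift map $s \mapsto bs + \delta$ of Lemma~\ref{lem:infinitewalk} --- your necessary direction is assembled in a genuinely different and more economical way. The paper rules out $|\Im(s)| \geq 2$ by comparing the crude magnitude bound $|s| \leq \sqrt{n^{2}+1} + 1$ against $n|\Im(s)| - 1$, and then disposes of the residual candidates $\pm(n+1+i)$ by a separate hand enumeration of every possible successor $b(n+1+i) + \delta$ against the candidate list. You instead apply Lemma~\ref{lem:reimbound} twice --- once to $s = n\beta + r + i\beta$ and once to its successor $bs + \delta$, whose imaginary part you correctly compute to be $r$ --- and extract the single inequality $|(n^{2}+1)\beta + 2nr| \leq n^{2}+1$, which eliminates $|\beta| \geq 2$ (since $2(n^{2}+1) - 2n > n^{2}+1$ is just $(n-1)^{2} > 0$) and the same-sign pairs at $|\beta| = 1$ (i.e.\ $\pm(n+1+i)$, since $(n+1)^{2} > n^{2}+1$) in one stroke; this buys a shorter, more uniform argument at the cost of a slightly denser computation. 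Your sufficient direction is the paper's in different clothing: the $2$-cycle for $n-1+i$ with digit difference $(n-1)^{2}+1$ and the $4$-cycle for $n+i$ are exactly the periodic radix expansions $0.\overline{((n-1)^{2}+1)0}$, $0.n^{2}0\overline{0((n-1)^{2}+1)}$, etc.\ that the paper writes out, read off as orbits of the shift map, and your telescoping justification that a bounded orbit with admissible digit differences certifies a neighbour is sound because $|b| > 1$ kills the tail $b^{-k}s_{k}$.
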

Gilbert used this result to derive the rules governing radix expansions in base $(-n+i, \{0, 1, \ldots, n^{2}\})$ (see theorem 5 and theorem 8 of \cite{G82}). Our proof, by way of Lemma~\ref{lem:reimbound}, uses a different approach than that of Gilbert. 

We first point out the following simple observation. 

\begin{lemma}\label{lem:infinitewalk} 
Fix an integer $n\geq 2$. If $s$ is a neighbour of $T_{n}$, then $bs + \delta$ is a neighbour of $T_{n}$ for some $\delta\in\{0, \pm 1, \ldots, \pm n^{2}\}$. 
\end{lemma}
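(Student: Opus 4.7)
The plan is to use the shift dynamics of the coding map $\pi$. By definition $s$ being a neighbour of $T_n$ means $T_n\cap(T_n+s)\neq\emptyset$, so there exist sequences $(d_j)_{j=1}^{\infty}, (d'_j)_{j=1}^{\infty}\in\{0,1,\ldots,n^2\}^{\mathbb{N}}$ with $s+\pi(d_j)=\pi(d'_j)$. Setting $\delta_j:=d'_j-d_j\in\{0,\pm1,\ldots,\pm n^2\}$, I would first observe
\begin{equation}
s \;=\; \pi(\delta_j)_{j=1}^{\infty} \;=\; \sum_{j=1}^{\infty}\delta_j b^{-j}.
\end{equation}

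Next I would apply $b$ and recognize the resulting expression as a left shift of the digit sequence:
\begin{equation}
bs \;=\; \delta_1 \;+\; \sum_{j=2}^{\infty}\delta_j b^{-(j-1)} \;=\; \delta_1 \;+\; \pi(\delta_{j+1})_{j=1}^{\infty}.
\end{equation}
Thus choosing $\delta:=-\delta_1\in\{0,\pm1,\ldots,\pm n^2\}$ gives $bs+\delta=\pi(\delta_{j+1})_{j=1}^{\infty}$.

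It then remains to verify two things: that $bs+\delta$ is a Gaussian integer and that it is a neighbour. The first is immediate, since $b$, $s$, and $\delta$ are all Gaussian integers. For the second, I would split each digit $\delta_{j+1}\in\{0,\pm1,\ldots,\pm n^2\}$ as a difference $e'_j-e_j$ with $e_j,e'_j\in\{0,1,\ldots,n^2\}$ (take $e_j=0,e'_j=\delta_{j+1}$ when $\delta_{j+1}\geq 0$, and $e_j=-\delta_{j+1},e'_j=0$ otherwise). Then $bs+\delta=\pi(e'_j)-\pi(e_j)$ with both $\pi(e_j),\pi(e'_j)\in T_n$, so $\pi(e_j)+(bs+\delta)=\pi(e'_j)$ exhibits a point in $T_n\cap(T_n+(bs+\delta))$.

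There is no real obstacle here: the result is essentially the statement that multiplication by the base $b$ corresponds, up to an integer digit correction, to the shift map on representing sequences, and that any Gaussian integer representable by digits in $\{0,\pm1,\ldots,\pm n^2\}$ is automatically a neighbour of $T_n$.
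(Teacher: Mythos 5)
Your proof is correct and follows essentially the same route as the paper: multiply the relation $s+\pi(d_j)=\pi(d'_j)$ by $b$ and recognize the result as a shift of the digit sequences. The only (harmless) detour is re-splitting each $\delta_{j+1}$ into new digits $e_j,e'_j$ at the end; the paper avoids this by simply carrying the shifted original sequences $(d_{j+1})$ and $(d'_{j+1})$ through the rearranged equation, which already witness that $bs+(d_1-d'_1)$ is a neighbour.
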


\begin{proof}
Again, there exist sequences $(d_{j})_{j=1}^{\infty}$ and $(d_{j}^{'})_{j=1}^{\infty}$ with entries in $\{0, 1, \ldots, n^{2}\}$ such that 
\begin{equation}
s + \pi(d_{j})_{j=1}^{\infty} = \pi(d_{j}^{'})_{j=1}^{\infty}. 
\end{equation}
This equation holds if and only if
\begin{equation}
bs + (d_{1}-d_{1}^{'}) + \pi(d_{j+1})_{j=1}^{\infty} = \pi(d_{j+1}^{'})_{j=1}^{\infty}. 
\end{equation}
This completes the proof. 
\end{proof}

We are now in position to prove Theorem~\ref{thm:neighbourset}. 

\begin{proof}[Proof of Theorem \ref{thm:neighbourset}] 
These case $n = 2$ can be computed explicitly using the neighbour finding algorithm in \cite{ST03}. We proceed assuming $n\geq 3$. Suppose $s$ is a neighbour of $T_{n}$. 

By Lemma \ref{lem:reimbound}, if $s$ is real, then $s\in\{0, \pm 1\}$. That lemma also implies that $s$ cannot be purely imaginary. If that were the case, $|n\Im(s)| < 2$ where $n\geq3$. This is impossible. 

We now claim that $|\Im(s)|$ cannot be larger than $1$. Let us denote $\Im(s)$ by $\beta$. Lemma $\ref{lem:reimbound}$ implies that $Re(s)$ is one of $n\beta - 1$, $n\beta$, or $n\beta+1$. Let us assume $\beta$ is greater than or equal to $2$. The magnitude of $s$ is therefore bounded below by $n\beta-1$.
On the otherhand, $|s| \leq n^{2}\sum_{j=1}^{\infty}|b|^{-j} = \sqrt{n^{2}+1} + 1$. We will argue that this quantity is strictly less $n\beta-1$ for $n\geq3$. 
Observe that for $n\geq3$
\begin{align}
\sqrt{n^{2}+1} + 1 &< n+2 \\
&\leq 2n-1 \\
&\leq n\beta-1.  
\end{align}
The second inequality holds since $n - 3 \geq 0$. This shows that when $\beta\geq2$, the number $s$ is not a neighbour of $T_{n}$. The case when $\beta$ is less than or equal to $-2$ is similar.

The only remaining cases to consider are $\beta = 1$ or $-1$. Lemma \ref{lem:reimbound} implies that $s$ is one of $\pm(n-1+i), \pm(n+i)$, and $\pm(n+1+i)$. 

By Lemma~\ref{lem:infinitewalk}, if $n+1+i$ is a neighbour of $T_{n}$, there must exist $\delta\in\{0, \pm 1, \ldots, \pm n^{2}\}$ such that $b(n+1+i) + \delta$ is also a neighbour of $T_{n}$. 
The set of neighbours of $T_{n}$ is a subset of $\{0, \pm1, \pm(n-1+i), \pm(n+i), \pm(n+1+i)\}$. We observe that $b(n+1+i) = -(n^{2}+n+1)+i$. We see that adding an element of $\{0, \pm 1, \ldots, \pm n^{2}\}$ (a real number) to this expression cannot result in any of $0$, $1$, and $-1$. To see that not a single neighbour is obtainable, we compute $\delta$ using the remaining potential neighbours of $T_{n}$. 
\begin{align}
(n+i) + n^{2}+n+1 - i &= n^{2} + 2n + 1, \\
(-n-i) + n^{2}+n+1 - i &= n^{2}+1-2i, \\
(n-1+i) + n^{2}+n+1 - i &= n^{2}+2n, \\
(-n+1-i) + n^{2}+n+1 - i &= n^{2} +2 -2i, \\
(n+1+i) + n^{2}+n+1 - i &= n^{2}+2n+2, \\
(-n-1-i) + n^{2}+n+1 - i &= n^{2} -2i.
\end{align}
All of these are either larger than $n^{2}$ or are not real and therefore are not in $\{0, \pm 1, \ldots, \pm n^{2}\}$. We conclude that $n+1+i$ is not a neighbour of $T_{n}$. Similarly, it can be shown that $-n-1-i$ is not a neighbour of $T_{n}$. 

We have shown that a neighbour of $T$ is an element of the set $\{0, \pm1, \pm (n-1+i), \pm (n+i)\}$. To see that the converse also holds, we show that there exists $t\in T\cap(T+s)$ for each $s$. It can be verified explicitly that

\begin{align}
1 + 0.(2n-1)\overline{((n-1)^{2}+1)0} &= 0.0\overline{0((n-1)^{2}+1)}, \\
-1 + 0.0\overline{0((n-1)^{2}+1)} &= 0.(2n-1)\overline{((n-1)^{2}+1)0}, \\
(n+i) + 0.n^{2}0\overline{0((n-1)^{2}+1)} &= 0.0(2n-1)\overline{((n-1)^{2}+1)0}, \\
(-n-i) + 0. 0(2n-1)\overline{((n-1)^{2}+1)0} &= 0.n^{2}0\overline{0((n-1)^{2}+1)},\\
(n-1+i) + 0.\overline{((n-1)^{2}+1)0} &= 0.\overline{0((n-1)^{2}+1)},\\
(-n+1-i) + 0.\overline{0((n-1)^{2}+1)} &= 0.\overline{((n-1)^{2}+1)0}.
\end{align}
The verification can be performed in the same way as in the proof of Corollary~\ref{cor:realNeighbours}.

\end{proof}

\section{\textbf{An Application to Box-Counting Dimension}}\label{sec:boxIntersect} 

In this section we prove Theorem~\ref{thm:improvement}. Let us recall the definition of the box-counting dimension. 

\begin{definition} 
Let $X$ be a bounded subset of $\mathbb{R}^{m}$. Given $\delta>0$, we let $N_{\delta}(X)$ denote the smallest number of sets of diameter $\delta$
needed to cover $X$. The \textit{upper box-counting dimension} and the \textit{lower box-counting dimension of $X$} are
\begin{align}
\overline{\dim}_{B}X &:= \limsup_{\delta\rightarrow0}\frac{\log N_{\delta}(X)}{-\log\delta}, \\
\underline{\dim}_{B}X &:= \liminf_{\delta\rightarrow0}\frac{\log N_{\delta}(X)}{-\log\delta}, 
\end{align}
respectively. If these quantities are equal, then that value is \textit{the box-counting dimension of $X$} and is denoted by $\dim_{B}X$. 
\end{definition}

It is known that we can replace the function $N_{\delta}$ with the function that counts the number of $\delta$-mesh cubes that intersect $X$ and still capture the upper and lower box-counting dimensions (see 3.1 in \cite{F90}). This is the collection of cubes $\{[k_{1}\delta, (k_{d}+1)\delta] \times \cdots \times [k_{d}\delta, (k_{d}+1)\delta] : k_{j}\in\mathbb{Z}\}$. In the plane, it is a collection of squares. For our purposes, we wish to count translations of scalings of the $n$th fundamental tile $T_{n}$ (see Definition~\ref{def:fundTile}). 

\begin{definition} 
Let $A$ be the attractor of an iterated function system $\{f_{1}, f_{2}, \ldots, f_{m}\}$ and let $k$ be a positive integer. A $k$-tile of $A$ is any set of the form
$A_{j_{1}, j_{2}, \ldots, j_{k}} := (f_{j_{1}}\circ f_{j_{2}} \circ \cdots \circ f_{j_{k}})(A)$. 
\end{definition}

For $n\geq 2$, the $k$-tiles of $T = T_{n}$ are the sets $T_{d_{1}, \ldots, d_{k}} = \{0.d_{1}d_{2}\ldots d_{k} + b^{-k}t : t\in T\}$, where $d_{1}, d_{2}, \ldots, d_{k}\in \{0, 1, 2, \ldots, n^{2}\}$. The following lemma, Lemma~\ref{lem:boxtile}, allows us to trade counting boxes with counting $k$-tiles in order to compute the upper and lower box-counting dimensions.

\begin{lemma}\label{lem:boxtile}
Fix an integer $n\geq 2$. Let $F$ be a nonempty subset of $T_{n}$. For a fixed integer $k\geq1$, let $N_{k}(F)$ denote the number of $k$-tiles of $T_{n}$ that intersect $F$. Then 
\begin{align}
\overline{\dim}_{B}F &= \limsup_{k\rightarrow\infty}\frac{\log N_{k}(F)}{k\log{|b|}}, \\
\underline{\dim}_{B}F &= \liminf_{k\rightarrow\infty}\frac{\log N_{k}(F)}{k\log{|b|}}. 
\end{align}
\end{lemma}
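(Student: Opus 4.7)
The plan is to show that for each $k\geq 1$, the counts $N_k(F)$ and $N_{|b|^{-k}}(F)$ differ by at most a multiplicative constant depending only on $n$. Since the mesh formulation of the box-counting dimension uses $\delta$-covers and $-\log|b|^{-k}=k\log|b|$, this comparison immediately yields the stated expressions for $\overline{\dim}_B F$ and $\underline{\dim}_B F$ along the sequence $\delta_k:=|b|^{-k}$. To pass to arbitrary $\delta\to 0^{+}$, I would sandwich $\delta$ between $|b|^{-(k+1)}$ and $|b|^{-k}$ for the unique $k$ making this valid and invoke the monotonicity of $N_\delta$ in $\delta$; the additive $\log|b|$ perturbation in the denominator vanishes in the limit.

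One direction, $N_{\delta_k}(F)\leq M_1 N_k(F)$, is the easier one: each $k$-tile has the form $b^{-k}T_n+\sum_{j=1}^{k}b^{-j}d_j$, so has diameter exactly $|b|^{-k}\diam(T_n)$ and can therefore be covered by at most $M_1=M_1(n)$ sets of diameter $\delta_k$ (this count is scale-invariant, depending only on the ratio $\diam(T_n)$). Since $F\subset T_n$ and $T_n$ is the union of its $k$-tiles, taking the union of such small covers over the $N_k(F)$ tiles meeting $F$ produces a cover of $F$ by at most $M_1 N_k(F)$ sets of diameter $\delta_k$.

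For the reverse direction, $N_k(F)\leq M_2 N_{\delta_k}(F)$, I would show that any set $U$ of diameter $\delta_k$ can meet at most $M_2=M_2(n)$ distinct $k$-tiles. Indeed, every $k$-tile meeting $U$ lies in a common closed disk $D'$ of diameter $O(\delta_k)$, and hence of planar Lebesgue measure $O(|b|^{-2k})$. By Theorem~\ref{thm:radixExist}, $T_n$ is a translational tile of $\mathbb{C}$, so the IFS with digit set $\{0,1,\ldots,n^{2}\}$ satisfies the open set condition, and its $k$-tiles have pairwise disjoint interiors of common Lebesgue measure $|b|^{-2k}\cdot\text{area}(T_n)$. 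A direct area comparison on $D'$ then bounds the number of $k$-tiles meeting $U$ by a constant depending only on $n$; applying this bound to each set in a minimum $\delta_k$-cover of $F$ gives the inequality. The main obstacle is precisely this area-counting step: it requires the nontrivial tiling property of $T_n$, or equivalently the open set condition for its defining IFS, which in turn follows from the uniqueness built into the Katai-Szabo radix expansion theorem. Once the bounded-overlap fact is in hand, combining the two inequalities and taking limsup and liminf on both sides of $\tfrac{\log N_{\delta_k}(F)}{k\log|b|}$ and $\tfrac{\log N_k(F)}{k\log|b|}$ completes the proof.
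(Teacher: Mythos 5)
Your argument is correct. The paper itself does not prove this lemma; it defers to the proof in \cite{PS21}, which proceeds by essentially the same two-sided comparison between $N_{k}(F)$ and $N_{\delta_{k}}(F)$ at the scales $\delta_{k}=|b|^{-k}$ that you give. You correctly isolate the only nontrivial ingredient, namely the bounded-overlap bound $M_{2}$: the $k$-tiles all have diameter $|b|^{-k}\diam(T_{n})$ and Lebesgue measure $|b|^{-2k}\operatorname{area}(T_{n})$, and the Katai--Szabo uniqueness theorem gives $\operatorname{area}(T_{n})>0$ together with pairwise measure-zero overlaps of distinct $k$-tiles, so the area comparison inside a disk of radius $O(\delta_{k})$ yields a constant depending only on $n$. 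Combined with the routine covering bound in the other direction and the standard reduction of $\delta\to0^{+}$ to the geometric sequence $\delta_{k}$, this is a complete and self-contained proof.
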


The proof of this result can be found in \cite{PS21}. Computing $N_{k}(F)$ can be made easier if $k$-tiles are disjoint. We provide a sufficient condition. 

\begin{lemma}\label{lem:sepOne} 
Fix an integer $n\geq 2$. Let $D\subset\{0, 1, \ldots, n^{2}\}$ satisfy $|d-d^{'}|\neq1$ for all $d\in D$. The $k$-tiles $T_{d_{1}, \ldots, d_{k}}$ and $T_{d_{1}^{'}, \ldots, d_{k}^{'}}$ of $T_{n}$ are disjoint whenever $d_{j}\neq d_{j}^{'}$ for some $1\leq j \leq k$ and $d_{j}, d_{j}^{'}\in D$ for each $j$. 
\end{lemma}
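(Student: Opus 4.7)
The plan is to reduce the claimed disjointness to the classification of real neighbours of $T_n$ obtained in Corollary~\ref{cor:realNeighbours}. Concretely, I want to show that if two $k$-tiles with digits in $D$ share a point, then at the first coordinate where they differ, the digit gap must be exactly $1$, contradicting the hypothesis.

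Suppose for contradiction that $x \in T_{d_1, \ldots, d_k} \cap T_{d_1', \ldots, d_k'}$ with $d_j, d_j' \in D$ and the two tuples not identical. Let $J$ be the smallest index at which they differ. Write
\[
x = (f_{d_1} \circ \cdots \circ f_{d_k})(t) = (f_{d_1'} \circ \cdots \circ f_{d_k'})(t')
\]
for some $t, t' \in T_n$. Because each $f_d(z) = b^{-1}(z + d)$ is an affine bijection of $\mathbb{C}$, and $d_j = d_j'$ for $j < J$, the initial blocks cancel, leaving $(f_{d_J} \circ \cdots \circ f_{d_k})(t) = (f_{d_J'} \circ \cdots \circ f_{d_k'})(t')$. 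Multiplying by $b$ to peel off $f_{d_J}$ versus $f_{d_J'}$ gives
\[
u + d_J = u' + d_J',
\]
where $u := (f_{d_{J+1}} \circ \cdots \circ f_{d_k})(t)$ and $u' := (f_{d_{J+1}'} \circ \cdots \circ f_{d_k'})(t')$. Since each $f_d$ with $d \in \{0,1,\ldots,n^2\}$ maps $T_n$ into $T_n$, both $u$ and $u'$ lie in $T_n$.

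Rewriting the displayed equation as $u = u' + (d_J' - d_J)$ shows $u \in T_n \cap (T_n + (d_J' - d_J))$, so $d_J' - d_J$ is a real neighbour of $T_n$. By the ``$T_n$'' half of Corollary~\ref{cor:realNeighbours}, the real neighbours of $T_n$ form the set $\{0, \pm 1\}$. Since $d_J \neq d_J'$ by choice of $J$, we conclude $|d_J - d_J'| = 1$, which contradicts the standing hypothesis on $D$. Hence the two $k$-tiles are disjoint.

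The only genuine input is the neighbour classification, which is already in hand, so no real obstacle remains; the delicate point is purely bookkeeping, namely verifying that $u, u' \in T_n$ (not merely in some superset) and that the shift-and-cancel step is legitimate because every $f_d$ is invertible on $\mathbb{C}$. I would state these two observations explicitly in the write-up to avoid any confusion between the role of the full digit set $\{0,1,\ldots,n^2\}$ (which defines $T_n$) and the subset $D$ (which constrains the tuples under consideration).
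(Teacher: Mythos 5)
Your proposal is correct and follows essentially the same route as the paper: cancel the common prefix up to the first index $J$ of disagreement, peel off the $J$th digit to exhibit $d_J'-d_J$ as a real neighbour of $T_n$, and invoke Corollary~\ref{cor:realNeighbours} to force $|d_J-d_J'|=1$, contradicting the hypothesis on $D$. The only cosmetic difference is that you phrase the cancellation via the invertible maps $f_d$ acting on points $t,t'\in T_n$, whereas the paper writes the common point as a radix expansion $\pi(d_j)_{j=1}^{\infty}=\pi(d_j')_{j=1}^{\infty}$ and manipulates the series directly.
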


\begin{proof}
Suppose that $(d_{1}, d_{2}, \ldots, d_{k})$ and $(d_{1}^{'}, d_{2}^{'}, \ldots, d_{k}^{'})$ are distinct tuples with $d_{j}, d_{j}^{'}\in D$ for each $j$. 
Let $J$ be the smallest index in the set $\{1, 2, \ldots, k\}$ at which the specified digits differ. Suppose that the intersection of the two $k$ tiles is nonempty. By definition, there exists sequences $(d_{k+j})_{j\geq1}$ and $(d_{k+j}^{'})_{j\geq1}$ with $d_{k+j}, d_{k+j}^{'}\in \{0, 1, \ldots, n^{2}\}$ for each $j$ such that $\pi(d_{j})_{j=1}^{\infty} = \pi(d_{j}^{'})_{j=1}^{\infty}$. Multiplying by $b^{J}$ and then subtracting $d_{1}b^{J-1}+\cdots+d_{J}$ on both sides yields $\pi(d_{J+j})_{j=1}^{\infty} = (d_{J}^{'}-d_{J}) + \pi(d_{J+j}^{'})_{j=1}^{\infty}$ and thus $d_{J}^{'}-d_{J}$ is a neighbour of $T_{n}$ where $|d_{J}-d_{J}^{'}| > 1$. This contradicts Corollary~\ref{cor:realNeighbours}. 
\end{proof}

Theorem~\ref{thm:boundedDigitFormula} and Theorem~\ref{thm:unboundedDigitFormula} are extensions of theorem 7.4 in \cite{PS21}. Recall that $C_{n, D}$ denotes the restricted digit set generatedy by $(n, D)$ (see Definition~\ref{def:restrictedDigitSet}). 

\begin{theorem}\label{thm:boundedDigitFormula} 
Fix $n\geq2$ and suppose $D\subset\{0, 1, \ldots, \lfloor n^{2}/2 \rfloor\}$ is such that $|\delta-\delta^{'}| \neq 1$ for all $\delta, \delta^{'}\in \Delta := D - D$. If $\alpha = \pi(\alpha_{j})_{j=1}^{\infty}$ such that $\alpha_{j}\in \Delta$, then 
\begin{equation}
\underline{\dim}_{B}(C_{n, d} \cap (C_{n, D} + \alpha)) = \liminf_{k\rightarrow\infty} \frac{\log{M_{k}(\alpha)}}{k\log{|b|}}
\end{equation}
where $M_{k}(\alpha) := \prod_{j=1}^{k}|D\cap(D+\alpha_{j})|$. 
\end{theorem}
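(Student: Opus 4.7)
The plan is to apply Lemma~\ref{lem:boxtile} to reduce the computation of $\underline{\dim}_B C(\alpha)$, where $C(\alpha) := C_{n,D}\cap(C_{n,D}+\alpha)$, to the growth rate of $N_k(\alpha)$, the number of $T_n$-$k$-tiles that meet $C(\alpha)$, and then to identify $N_k(\alpha)$ with $M_k(\alpha)$ via a Moran-type recursive decomposition of $C(\alpha)$. First I would establish that every point of $C_{n,D}$ has a unique $D$-expansion: if $(d_j), (d_j')\in D^{\mathbb{N}}$ yield the same $\pi$-image and first differ at index $J$, the shift argument from Lemma~\ref{lem:sepOne} shows $d_J - d_J'$ is a real neighbour of $T_n$, so $d_J - d_J'\in\{0,\pm1\}$ by Corollary~\ref{cor:realNeighbours}; the hypothesis $|\delta-\delta'|\neq 1$ for $\delta,\delta'\in\Delta$, applied with $\delta = d_J-d_J'\in\Delta$ and $\delta' = 0\in\Delta$, forbids $\pm1$, forcing $d_J = d_J'$. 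Combined with the disjointness of $T_n$-$k$-tiles carrying $D^k$-prefixes from Lemma~\ref{lem:sepOne}, this identifies $N_k(\alpha)$ with the number of prefixes $(d_1,\ldots,d_k)\in D^k$ that extend to a sequence in $D^{\mathbb{N}}$ whose $\pi$-image lies in $C(\alpha)$.

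The heart of the argument is the identity
\[
C(\alpha) \;=\; \bigsqcup_{c\,\in\,D\cap(D+\alpha_1)} b^{-1}\bigl(c + C(\sigma\alpha)\bigr),
\]
where $\sigma\alpha := b\alpha - \alpha_1 = \pi((\alpha_{j+1})_{j\geq 1})$. The inclusion $\supseteq$ is immediate: for $c\in D\cap(D+\alpha_1)$ and $z'\in C(\sigma\alpha)$ the point $z := b^{-1}(c+z')$ lies in $C_{n,D}$ because $c\in D$, and $z-\alpha = b^{-1}\bigl((c-\alpha_1)+(z'-\sigma\alpha)\bigr)$ lies in $C_{n,D}$ because $c-\alpha_1\in D$ and $z'-\sigma\alpha\in C_{n,D}$. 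Disjointness of the union follows from $D$-expansion uniqueness. The main obstacle is the $\subseteq$ direction: given $z\in C(\alpha)$ with unique $D$-expansions $\pi(c_j) = z$ and $\pi(e_j) = z - \alpha$, the difference sequence $(c_j - e_j)\in\Delta^{\mathbb{N}}$ is some $\Delta$-expansion of $\alpha$, and I need $(c_j - e_j) = (\alpha_j)$ to pin down $c_1\in D\cap(D+\alpha_1)$. I plan to settle this by proving that elements of $E_{n,D}$ admit unique $\Delta$-expansions via a real-neighbour analysis for $E_{n,D}$ in the style of Section~\ref{sec:neighbours}. The essential observation is that the identities witnessing the magnitude-two real neighbours of $\mathcal{E}_n$ in the proof of Corollary~\ref{cor:realNeighbours} rely on digits of magnitude $n^2$, which lie outside $\Delta\subseteq\{-\lfloor n^2/2\rfloor,\ldots,\lfloor n^2/2\rfloor\}$; reworking the bound in Lemma~\ref{lem:reimbound} with this smaller digit alphabet should rule out all nonzero real neighbours of $E_{n,D}$, yielding the required uniqueness.

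Once the recursion is in hand, iterating it $k$ times (with $\sigma^k\alpha := \pi((\alpha_{k+j})_{j\geq 1})$) produces a disjoint decomposition of $C(\alpha)$ indexed by $\prod_{j=1}^k D\cap(D+\alpha_j)$, each summand sitting inside its own $T_n$-$k$-tile. Every factor $D\cap(D+\alpha_j)$ is nonempty because $\alpha_j\in D-D$, and $C(\sigma^k\alpha)$ is nonempty because $\sigma^k\alpha$ again has a $\Delta$-expansion and therefore lies in $F_{n,D}$. Hence $N_k(\alpha) = M_k(\alpha) = \prod_{j=1}^k|D\cap(D+\alpha_j)|$, and substituting into Lemma~\ref{lem:boxtile} yields
\[
\underline{\dim}_B C(\alpha) = \liminf_{k\to\infty}\frac{\log M_k(\alpha)}{k\log|b|}.
\]
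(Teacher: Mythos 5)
Your argument is sound and follows the same overall skeleton as the paper's proof (reduce to counting $k$-tiles via Lemma~\ref{lem:boxtile}, then identify the tiles meeting $C(\alpha)$ with $\prod_{j=1}^{k}D\cap(D+\alpha_{j})$), but you handle the crucial covering step by a different mechanism. The paper never proves uniqueness of $\Delta$-expansions: starting from $\pi(d_{j})_{j=1}^{\infty}=\pi(d_{j}'+\alpha_{j})_{j=1}^{\infty}$ it writes $\alpha_{j}=d_{j}''-d_{j}'''$ and adds $\pi(d_{j}''')_{j=1}^{\infty}$ to both sides, so that both sides become radix expansions with digits in $D+D\subset\{0,1,\ldots,n^{2}\}$ (this is where $D\subset\{0,\ldots,\lfloor n^{2}/2\rfloor\}$ enters); since the separation condition transfers from $\Delta$ to $D+D$, Lemma~\ref{lem:sepOne} forces digitwise equality. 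You instead prove uniqueness of the $\Delta$-expansion of $\alpha$ directly, by observing that a first differing digit produces a nonzero real neighbour $s$ of $E_{n,D}$ lying in $\Delta-\Delta$, and that $\Delta-\Delta\subseteq\{-n^{2},\ldots,n^{2}\}$ (again by $D\subset\{0,\ldots,\lfloor n^{2}/2\rfloor\}$) lets the estimate of Lemma~\ref{lem:reimbound} apply to give $|s|<2$, after which the hypothesis $|\delta-\delta'|\neq1$ forces $s=0$. The two devices are mirror images (sums landing in $\{0,\ldots,n^{2}\}$ versus differences landing in $\{-n^{2},\ldots,n^{2}\}$) and both bottom out in Lemma~\ref{lem:reimbound}; yours buys a cleaner structural byproduct, namely the exact disjoint Moran decomposition of $C(\alpha)$ that the paper only sets up later in the proof of Theorem~\ref{thm:extDense}, at the cost of a neighbour analysis for $E_{n,D}$ that the paper's trick avoids. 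Note that you do not actually need to ``rework'' Lemma~\ref{lem:reimbound}: its proof uses only that the digits lie in $\{0,\pm1,\ldots,\pm n^{2}\}$, which $\Delta-\Delta$ already satisfies under your hypothesis.

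One point needs repair: the asserted equality $N_{k}(\alpha)=M_{k}(\alpha)$ is not correct as stated, because $N_{k}$ counts \emph{all} $k$-tiles of $T_{n}$ meeting $C(\alpha)$, and distinct $k$-tiles of $T_{n}$ can overlap, so tiles whose digit strings lie outside $\prod_{j=1}^{k}D\cap(D+\alpha_{j})$ (indeed, with digits outside $D$) may still intersect $C(\alpha)$. Your decomposition does give $M_{k}(\alpha)\leq N_{k}(\alpha)$, and the neighbour classification of Theorem~\ref{thm:neighbourset} bounds the overlap multiplicity of $k$-tiles, yielding $N_{k}(\alpha)\leq 9M_{k}(\alpha)$ exactly as in the paper; the multiplicative constant is harmless in the $\liminf$, so the conclusion stands once this two-sided comparison replaces the claimed equality.
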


\begin{remark}
The result in \cite{PS21}, theorem 7.4, requires that $|\delta - \delta^{'}| > n$ for all $\delta, \delta^{'}\in\Delta$ as opposed to the separation condition of $1$. We also mention that the lower box-counting dimension could be replaced with the upper box-counting dimension. 
\end{remark}

\begin{proof}
Let $C(\alpha) := C_{n, D} \cap (C_{n, D} + \alpha)$. According to Lemma \ref{lem:boxtile}, it suffices to prove that $\lambda M_{k}(\alpha) \leq N_{k}(C(\alpha)) \leq \rho M_{k}(\alpha)$ for contants $\lambda, \rho $. This will hold if we show that the collection of $k$-tiles $T_{d_{1}, d_{2}, \ldots, d_{k}}$, where $d_{j}\in D\cap(D+\alpha_{j})$ covers $C(\alpha)$ and each $k$-tile in the cover intersects $C(\alpha)$. 

Let $T_{d_{1}, d_{2}, \ldots, d_{k}}$ be a $k$-tile with $d_{j}\in D\cap(D+\alpha_{j})$ for $j=1, 2, \ldots, k$. For each $j$, there exists $d_{j}^{'}\in D$ such that equation $d_{j} = d_{j}^{'} + \alpha_{j}$ holds. In general, $\alpha_{j} = d_{j}^{''} - d_{j}^{'''}$ with $d_{j}^{''}, d_{j}^{'''}\in D$ for each $j$. Consider the number $z := \pi(z_{j})_{j=1}^{\infty}$ such that $z_{j} = d_{j}$ for all $1\leq j \leq k$ and $z_{j} = d_{j}^{''}$ for all $j>k$. Immediately we see that $z$ is an element of $C_{n, D}$. Since the number given by $0.d_{1}^{'}d_{2}^{'}\ldots d_{k}^{'}d_{k+1}^{'''}d_{k+2}^{'''}\ldots$ is also an element of $C_{n, D}$, it follows that $z \in C_{n, D} +\alpha$. Therefore $T_{d_{1}, d_{2}, \ldots, d_{k}}$ intersects $C(\alpha)$. We conclude that $ M_{k}(\alpha) \leq N_{k}(C(\alpha))$. 

Now we argue that for any positive integer $k$ the $C(\alpha)$ is covered by the $k$-tiles which specifiy digits in $D\cap(D+\alpha_{j})$ for each $j=1, 2, \ldots, k$. Let $z\in C(\alpha)$. This means there exists sequences $(d_{j})_{j=1}^{\infty}$ and $(d_{j}^{'})_{j=1}^{\infty}$ such that 
\begin{equation} \label{eq:intersectionelement}
\pi(d_{j})_{j=1}^{\infty} = \pi(d_{j}^{'}+\alpha_{j})_{j=1}^{\infty}
\end{equation}
Recall that $\alpha_{j} = d_{j}^{''} - d_{j}^{'''}$ for each $j$. Adding $\pi(d_{j}^{'''})_{j=1}^{\infty}$ to both sides of (\ref{eq:intersectionelement}) yields 
\begin{equation}
\pi(d_{j}+d_{j}^{'''})_{j=1}^{\infty}= \pi(d_{j}^{'}+d_{j}^{''})_{j=1}^{\infty}. 
\end{equation}

Since $d\leq n^{2}/2$ for all $d\in D$, and $d_{j}, d_{j}^{'}, d_{j}^{''}$, and $d_{j}^{'''}$ are all elements of $D$ for all $j$, their pairwise sums are in $\{0, 1, \ldots, n^{2}\}$. Furthermore, any separation condition on the elements of $\Delta$ holds if and only the same condition holds for elements of $D+D$. This is because $(a+b) - (c+d) = (a-c) - (d-b)$ for any collection of integers $a, b, c, d$. It follows from Lemma~\ref{lem:sepOne} that $d_{j} + d_{j}^{'''} = d_{j}^{'} + d_{j}^{''}$. In particular, $d_{j} = d_{j}^{'} + x_{j}$ and we conclude that $z\in T_{d_{1}, d_{2},\ldots, d_{k}}$ for any $k$ where $d_{j}\in D\cap(D+\alpha_{j})$. It follows from Theorem~\ref{thm:neighbourset} that each of these $k$-tiles intersects at most $9$ $k$-tiles of $T$. This yields $N_{k}(C(\alpha)) \leq 9M_{k}(\alpha)$. 
\end{proof}

The following example is an application of Theorem~\ref{thm:boundedDigitFormula} to a case that is not covered by theorem 7.4 in \cite{PS21}. 

\begin{example} \label{ex:boxEx}
Let $b=-3+i$. The subsets of $\{0, 1,\ldots, 9\}$ for which $d\leq9/2$ are subsets of $\{0, 1, 2, 3, 4\}$. The constraint that every pair of elements $a, a^{'}\in\Delta$ satisfies $|a-a^{'}|\neq1$ yields the following subsets of $\{0, 1, 2, 3, 4\}$ that are not singletons: $\{0, 2\}$, $\{0, 3\}$, $\{0, 4\}$, $\{1, 3\}$, $\{1, 4\}$, $\{2, 4\}$, and $\{0, 2, 4\}$. If $D$ is a singleton, then so is $C$. The box-counting dimension of $C_{n, D}$, and consequently $C(\alpha)$ for any $\alpha$, is zero in that case.  

Suppose we choose $D = \{0, 4\}$. Therefore $\Delta$ is equal to $\{-4, 0, 4\}$. The intersection of $C_{3, \{0, 4\}}$ and its translation by $\alpha = \frac{-28+24i}{-19+26i} = 0.\overline{-404}$ is nonempty. We compute its box-counting dimension. Since $|D\cap(D-4)| = |D\cap(D+4)| = 1$ and $|D| = 2$, it follows that
\begin{equation}
M_{k}(\alpha) = \begin{cases} 2^{k/3} &\;\text{if}\;k \equiv 0 \mod{3},\\
 2^{(k-1)/3} &\;\text{if}\; k \equiv 1 \mod{3},\\
 2^{(k+1)/3} &\;\text{if}\;k \equiv 2 \mod{3}.
\end{cases}
\end{equation}
Therefore $2^{(k-1)/3} \leq M_{k}(\alpha) \leq 2^{(k+1)/3}$ for all $k$. In particular, 
\begin{equation}
\frac{(k-1)\log{2}}{3k\log{10}} \leq \frac{\log{M_{k}(\alpha)}}{k\log{10}}\leq \frac{(k+1)\log{2}}{3k\log{10}}.
\end{equation}
By Theorem~\ref{thm:boundedDigitFormula}, we conclude that $\dim_{B}(C(\alpha)) = \frac{\log{2}}{3\log{10}}$.
\end{example}

It is possible to remove the bound of $n^{2}/2$ when the elements of $D-D$ satisfy a larger separation condition.

Fix a positive integer $n\geq 2$ and let $D$ be a subset of $\{0, 1, \ldots, n^{2}\}$. Recall that the set $\Delta := D-D$ is called \textit{sparse} if for all $\delta, \delta^{'}\in\Delta$ either $|\delta-\delta^{'}| > 2$ when $n\geq5$ or $|\delta-\delta^{'}| > 3$ when $n = 2, 3$ or $4$. 

The following lemma is analogous to Lemma~\ref{lem:sepOne}, but addresses the $n$th extended tile $\mathcal{E}_{n}$ (see Definition~\ref{def:fundTile}) rather than the $n$th fundamental tile $T_{n}$. 
\begin{lemma} \label{lem:extsep} 
Fix an integer $n\geq2$ and suppose $D\subset\{0, 1, \ldots, n^{2}\}$ is chosen such that $\Delta:= D-D$ is sparse. The $k$-tiles $\mathcal{E}_{\delta_{1}, \ldots, \delta_{k}}$ and $\mathcal{E}_{\delta_{1}^{'}, \ldots, \delta_{k}^{'}}$ are disjoint whenever $\delta_{j} \neq \delta_{j}^{'}$ for some index $1\leq j\leq k$ where $\delta_{j}, \delta_{j}^{'} \in \Delta$ for each $j$. 
\end{lemma}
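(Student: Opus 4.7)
The plan is to mirror the proof of Lemma~\ref{lem:sepOne} almost verbatim, with the extended tile $\mathcal{E}_n$ and its digit set $\{0, \pm 1, \ldots, \pm n^2\}$ playing the role that $T_n$ and $\{0, 1, \ldots, n^2\}$ played there. The only substantive change is the appeal to Corollary~\ref{cor:realNeighbours} in place of the bound on real neighbours of $T_n$, and it is precisely this swap that forces the stricter lower bounds in the definition of sparseness.

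Concretely, I would argue by contradiction. Suppose $(\delta_1, \ldots, \delta_k)$ and $(\delta_1', \ldots, \delta_k')$ are distinct tuples with entries in $\Delta$, and let $J$ be the smallest index at which they differ. If the $k$-tiles $\mathcal{E}_{\delta_1, \ldots, \delta_k}$ and $\mathcal{E}_{\delta_1', \ldots, \delta_k'}$ have a common point, then by the definition of $\mathcal{E}_n$ (see Definition~\ref{def:fundTile}) there exist sequences $(\delta_{k+j})_{j\geq 1}$ and $(\delta_{k+j}')_{j\geq 1}$ with entries in $\{0, \pm 1, \ldots, \pm n^2\}$ such that $\pi(\delta_j)_{j=1}^{\infty} = \pi(\delta_j')_{j=1}^{\infty}$. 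Multiplying by $b^J$ and subtracting the common initial segment $\delta_1 b^{J-1} + \cdots + \delta_{J-1} b$ from both sides gives
\begin{equation}
\delta_J + \pi(\delta_{J+j})_{j=1}^{\infty} = \delta_J' + \pi(\delta_{J+j}')_{j=1}^{\infty},
\end{equation}
so that $\delta_J' - \delta_J$ is expressed as the difference of two elements of $\mathcal{E}_n$; equivalently, $\delta_J' - \delta_J$ is a real neighbour of $\mathcal{E}_n$.

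Now I invoke Corollary~\ref{cor:realNeighbours}: the real neighbours of $\mathcal{E}_n$ lie in $\{0, \pm 1, \pm 2\}$ when $n \geq 5$ and in $\{0, \pm 1, \pm 2, \pm 3\}$ when $n = 2, 3, 4$. Since $\delta_J \neq \delta_J'$, this forces $|\delta_J - \delta_J'| \leq 2$ for $n \geq 5$ or $|\delta_J - \delta_J'| \leq 3$ for $n \in \{2, 3, 4\}$, directly contradicting the sparseness hypothesis on $\Delta$. Hence the two $k$-tiles must be disjoint.

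There is no real obstacle here beyond correctly identifying what the digit set of $\mathcal{E}_n$ is (namely $\{0, \pm 1, \ldots, \pm n^2\}$, which is $\Delta_{\max}$ for $D = \{0, 1, \ldots, n^2\}$) so that the ``tail digits'' $\delta_{k+j}$ really do produce an element of $\mathcal{E}_n$ after the shift; and in checking that the sparseness thresholds $2$ and $3$ are tuned exactly to the neighbour sets from Corollary~\ref{cor:realNeighbours}. Both are bookkeeping, so the whole argument should run in a few lines.
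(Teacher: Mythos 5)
Your proof is correct and follows exactly the route the paper takes: reduce to the first index $J$ of disagreement as in Lemma~\ref{lem:sepOne}, conclude that $\delta_J' - \delta_J$ is a real neighbour of $\mathcal{E}_n$, and note that sparseness puts $|\delta_J - \delta_J'|$ outside the neighbour set $\{0,\pm1,\pm2\}$ (resp.\ $\{0,\pm1,\pm2,\pm3\}$) given by Corollary~\ref{cor:realNeighbours}. The paper's own proof is just a terser statement of the same argument, deferring the algebra to the proof of Lemma~\ref{lem:sepOne}.
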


\begin{proof} 
Suppose $n \geq 5$ and  $|\delta-\delta^{'}| > 2$ for all distinct $\delta, \delta^{'}\in \Delta$. Suppose $\mathcal{E}_{\delta_{1}, \ldots, \delta_{k}} \cap \mathcal{E}_{\delta_{1}^{'}, \ldots, \delta_{k}^{'}}$ is not empty. We can deduce in the same way as in the proof of Lemma~\ref{lem:sepOne} that an integer with magnitude greater than $2$ is a neighbour of $\mathcal{E}_{n}$. This contradicts Corollarly~\ref{cor:realNeighbours}. Similarly, if $n = 2, 3,$ or $4$ and $|\delta-\delta^{'}| > 3$ and we assume that the intersection is nonempty, we also obtain a contradiction with Corollary~\ref{cor:realNeighbours}.
\end{proof}

\begin{theorem}\label{thm:unboundedDigitFormula} 
Fix and integer $n\geq2$ and suppose $D\subset\{0, 1, \ldots, n^{2}\}$ is chosen such that $\Delta:= D-D$ is sparse. If $\alpha = \pi(\alpha_{j})_{j=1}^{\infty}$ such that $\alpha_{j}\in \Delta$, then 
\begin{equation}
\underline{\dim}_{B}(C_{n, D}\cap(C_{n, D} + \alpha)) = \liminf_{k\rightarrow\infty} \frac{\log{M_{k}(\alpha)}}{k\log{|b|}}
\end{equation}
where $M_{k}(\alpha) := \prod_{j=1}^{k}|D\cap(D+\alpha_{j})|$. 
\end{theorem}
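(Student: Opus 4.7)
The plan is to adapt the proof of Theorem~\ref{thm:boundedDigitFormula} by replacing its sum-based separation argument (which relied on $d\leq n^{2}/2$ to keep sums inside $\{0,1,\ldots,n^{2}\}$ and hence trigger Lemma~\ref{lem:sepOne} on $T_{n}$) with a difference-based uniqueness argument that takes place inside the extended tile $\mathcal{E}_{n}$ and uses Lemma~\ref{lem:extsep}. As before, by Lemma~\ref{lem:boxtile} it suffices to exhibit constants $\lambda,\rho>0$ with $\lambda M_{k}(\alpha)\leq N_{k}(C(\alpha))\leq \rho M_{k}(\alpha)$ for all $k$, where $C(\alpha):=C_{n,D}\cap(C_{n,D}+\alpha)$.

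For the lower bound I would repeat verbatim the splicing construction from the proof of Theorem~\ref{thm:boundedDigitFormula}: given a tuple $(d_{1},\ldots,d_{k})$ with each $d_{j}\in D\cap(D+\alpha_{j})$, choose $d_{j}'\in D$ with $d_{j}=d_{j}'+\alpha_{j}$ and write each $\alpha_{j}=d_{j}''-d_{j}'''$ with $d_{j}'',d_{j}'''\in D$; then the point with digits $d_{1}\cdots d_{k}d_{k+1}''d_{k+2}''\cdots$ lies in $T_{d_{1},\ldots,d_{k}}\cap C(\alpha)$. This construction never uses $d\leq n^{2}/2$. Moreover, since $\Delta$ is sparse, any two distinct elements $d,d'\in D$ already satisfy $|d-d'|>2$ (or $>3$), so in particular $|d-d'|\neq 1$; hence Lemma~\ref{lem:sepOne} certifies that the $M_{k}(\alpha)$ candidate $k$-tiles are pairwise disjoint and distinct, giving $M_{k}(\alpha)\leq N_{k}(C(\alpha))$.

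The main obstacle is the cover direction: showing that every $z\in C(\alpha)$ belongs to some canonical $k$-tile $T_{d_{1},\ldots,d_{k}}$ with $d_{j}\in D\cap(D+\alpha_{j})$. Write $z=\pi(d_{j})=\pi(d_{j}')+\alpha$ for sequences $(d_{j}),(d_{j}')$ in $D$. Subtracting gives $\pi(d_{j}-d_{j}')=\alpha=\pi(\alpha_{j})$, so the two sequences $\gamma:=(d_{j}-d_{j}')$ and $(\alpha_{j})$ are both $\Delta$-valued radix expansions of the single point $\alpha\in E_{n,D}\subseteq \mathcal{E}_{n}$. For every $k$ the point $\alpha$ then lies simultaneously in the two $k$-tiles $\mathcal{E}_{\gamma_{1},\ldots,\gamma_{k}}$ and $\mathcal{E}_{\alpha_{1},\ldots,\alpha_{k}}$ of $\mathcal{E}_{n}$. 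By Lemma~\ref{lem:extsep} these $k$-tiles are disjoint the moment any coordinate disagrees, so $\gamma_{j}=\alpha_{j}$ for every $j\leq k$; letting $k\to\infty$ yields $d_{j}=d_{j}'+\alpha_{j}\in D\cap(D+\alpha_{j})$ for every $j$, as required. This step is where the sparse hypothesis enters essentially, replacing the arithmetic-sum trick of Theorem~\ref{thm:boundedDigitFormula}.

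With the cover in hand, any $k$-tile of $T_{n}$ intersecting $C(\alpha)$ must intersect one of the canonical tiles; by Theorem~\ref{thm:neighbourset} a fixed $k$-tile of $T_{n}$ meets at most nine others (itself and its eight translates by Gaussian-integer neighbours), so $N_{k}(C(\alpha))\leq 9M_{k}(\alpha)$. Sandwiching via Lemma~\ref{lem:boxtile} and taking $\liminf_{k\to\infty}$ of $\log N_{k}(C(\alpha))/(k\log|b|)$ then yields the claimed formula. The entire novelty relative to Theorem~\ref{thm:boundedDigitFormula} is compressed into phrasing the two competing digit sequences as labels on $k$-tiles of the \emph{extended} tile rather than the fundamental tile, after which Lemma~\ref{lem:extsep} plays the role that Lemma~\ref{lem:sepOne} played before.
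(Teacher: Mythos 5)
Your proposal is correct and follows essentially the same route as the paper: the lower bound is the unchanged splicing argument from Theorem~\ref{thm:boundedDigitFormula}, and the cover direction subtracts the two expansions to obtain two $\Delta$-valued radix expansions of $\alpha$ and invokes Lemma~\ref{lem:extsep} on the $k$-tiles of $\mathcal{E}_{n}$, exactly as the paper does. Your added remarks (that sparseness implies $|d-d'|\neq 1$ on $D$ so Lemma~\ref{lem:sepOne} gives disjointness of the canonical tiles, and the explicit passage to the limit $k\to\infty$) only make explicit what the paper leaves implicit.
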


This extends the application of the formula to sets $C_{n, D}$ that are not covered by Theorem~\ref{thm:boundedDigitFormula}. For example, for those sets defined using $n = 3$ and $D = \{0, 4, 8\}$. Theorem~\ref{thm:boundedDigitFormula} does apply to some cases that Theorem~\ref{thm:unboundedDigitFormula} does not. For example, $n = 3$ and $D = \{0, 3\}$. 

\begin{proof}
Let $C(\alpha) := C_{n, D} \cap (C_{n, D} + \alpha)$. The same argument used in the proof of Theorem~\ref{thm:boundedDigitFormula} can be used to show that $M_{k}(\alpha) \leq N_{k}(C(\alpha))$ and so we omit it. Let us skip to showing that every $z \in C(\alpha)$ is contained in a $k$-tile for any positive integer $k$ whose defining digits are in $D\cap(D+\alpha_{j})$ for each $j = 1, 2, \ldots, k$. 

For $z\in C_{n, D}$, there exists sequences $(d_{j})_{j\geq1}$ and $(d_{j}^{'})_{j\geq1}$ such that 
\begin{equation} \label{eq:intersectionelement2}
z = \pi(d_{j})_{j=1}^{\infty} = \pi(d_{j}^{'}+\alpha_{j})_{j=1}^{\infty}. 
\end{equation}
Substracting $\pi(d_{1}^{'})_{j=1}^{\infty}$ from the equation and obtain
\begin{equation}
\pi(d_{j}-d_{j}^{'})_{j=1}^{\infty} = \pi(\alpha_{j})_{j=1}^{\infty} . 
\end{equation}

By Lemma~\ref{lem:extsep}, it follows that $d_{j} = d_{j}^{'} + \alpha_{j}$. It immediately follows that $z$ is contained in $T_{d_{1}, d_{2}, \ldots, d_{k}}$. As in the proof of Theorem~\ref{thm:boundedDigitFormula} this implies $N_{k}(C(\alpha)) \leq 9M_{k}(\alpha)$. 
\end{proof}

The conclusions present in Theorem~\ref{thm:improvement} now follow as corollaries of Theorem~\ref{thm:boundedDigitFormula} and Theorem~\ref{thm:unboundedDigitFormula}. We include the proof from \cite{PS21} for completeness. For a fixed integer $n\geq 2$ and a subset $D\subset\{0, 1, \ldots, n^{2}\}$, set $C(\alpha) := C_{n, D} \cap (C_{n, D} + \alpha)$. Recall that $F_{n, D}$ denotes the fundamental set of translations $\{\alpha\in\mathbb{C} : C(\alpha) \neq \emptyset\}$. Recall the function $\Phi_{n, D} : \{\alpha\in F_{n, D}: \dim_{B}C(\alpha) \;\; \text{exists}\} \rightarrow [0, \dim_{B}C_{n, D}]$ given by $\alpha \mapsto \dim_{B}C(\alpha).$ 

\begin{corollary}\label{cor:boxDense} 
Fix an integer $n \geq 2$. Suppose either that $D \subset \{0, 1, \ldots, \lfloor n^{2}/2 \rfloor\}$ satisfies $|\delta - \delta^{'}| \neq 1$ for all $\delta, \delta^{'}\in\Delta := D-D$ or that $D\subset\{0, 1, \ldots, n^{2}\}$ is chosen such that $\Delta$ is sparse. 
The level sets of $\Phi_{n, D}$ are dense in $F_{n, D}$.
\end{corollary}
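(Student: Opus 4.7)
The plan is to reduce the density statement to a combinatorial problem about digit sequences, then resolve it by a tail perturbation. Under either hypothesis of the corollary, Theorems~\ref{thm:boundedDigitFormula} and~\ref{thm:unboundedDigitFormula} give
\[
\dim_{B}C(\alpha) \;=\; \lim_{k\to\infty}\frac{\log M_{k}(\alpha)}{k\log|b|}
\]
whenever this limit exists and $\alpha = \pi(\alpha_{j})_{j=1}^{\infty}$ with $\alpha_{j}\in\Delta$ (the $\limsup$ and $\liminf$ versions agree, per the remark following Theorem~\ref{thm:boundedDigitFormula}). Moreover every $\alpha\in F_{n,D}$ admits such a $\Delta$-coding: if $z\in C(\alpha)$ with $z=\pi(d_{j})$ and $z-\alpha=\pi(d_{j}')$ for $d_{j},d_{j}'\in D$, then $\alpha = \pi(d_{j}-d_{j}')$. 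The problem therefore reduces to: given $\alpha = \pi(\alpha_{j})$ in $F_{n,D}$, $\epsilon>0$, and target $t\in[0,\dim_{B}C_{n,D}]$, produce a $\Delta$-sequence $(\alpha_{j}')$ that agrees with $(\alpha_{j})$ on a long initial segment and satisfies $\frac{1}{k}\sum_{j=1}^{k}\log k(\alpha_{j}') \to t\log|b|$, where $k(\delta) := |D\cap(D+\delta)|$.

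The construction uses two \textbf{building block} digits. Let $\delta^{*} := \max D - \min D$; then $k(\delta^{*})=1$ (only the extremal pair in $D$ survives the translation) while $k(0) = |D|$. Setting $s := \dim_{B}C_{n,D}$ and $\lambda := t/s\in[0,1]$, I would first choose $N$ so that $2n^{2}|b|^{-N}/(|b|-1)<\epsilon$, then set $\alpha_{j}' = \alpha_{j}$ for $j\leq N$, and for $j > N$ set $\alpha_{j}' = 0$ if $\lfloor\lambda(j-N)\rfloor > \lfloor\lambda(j-N-1)\rfloor$ and $\alpha_{j}' = \delta^{*}$ otherwise. By construction exactly $\lfloor\lambda(k-N)\rfloor$ of the digits $\alpha_{N+1}',\ldots,\alpha_{k}'$ are zero, so the contribution of the initial $N$ digits becomes $O(1/k)$ and $\log M_{k}(\alpha')/(k\log|b|) \to \lambda s = t$. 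Setting $\alpha' := \pi(\alpha_{j}')$, three checks then complete the proof: $|\alpha-\alpha'| < \epsilon$ from the choice of $N$; $\alpha'\in F_{n,D}$ because $\alpha_{j}'\in\Delta$ gives $M_{k}(\alpha')\geq 1$, and the inequality $M_{k}(\alpha')\leq N_{k}(C(\alpha'))$ established inside the proofs of Theorems~\ref{thm:boundedDigitFormula} and~\ref{thm:unboundedDigitFormula} then forces $C(\alpha')\neq\emptyset$; and $\dim_{B}C(\alpha') = t$ by the limit formula.

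The main point requiring care is that the statement concerns the genuine box-counting dimension, so the limit, not merely the $\liminf$, must exist for $\alpha'$; this is precisely why zeros are placed at the prescribed density $\lfloor\lambda(j-N)\rfloor-\lfloor\lambda(j-N-1)\rfloor$ rather than in an arbitrary pattern. Once that is secured, the two separation regimes (cases (i) versus (ii)--(iii) of Theorem~\ref{thm:improvement}) are handled uniformly, since Theorems~\ref{thm:boundedDigitFormula} and~\ref{thm:unboundedDigitFormula} yield identical formulas for $\dim_{B}C(\alpha)$, so the construction does not need to be adjusted.
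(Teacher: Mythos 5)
Your proposal is correct and follows essentially the same route as the paper: fix a long initial segment of a $\Delta$-coding of $\alpha$, then fill the tail with the two digits $0$ and $\max D-\min D$ (giving factors $|D|$ and $1$) at density governed by the increments of $\lfloor\lambda j\rfloor$, and invoke Theorems~\ref{thm:boundedDigitFormula} and~\ref{thm:unboundedDigitFormula} to evaluate the resulting dimension as $\lambda\dim_{B}C_{n,D}$. The only differences are cosmetic (your explicit verification that $\alpha'\in F_{n,D}$ and the $\epsilon$-estimate on the tail are left implicit in the paper).
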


\begin{proof}
Let $\alpha \in F_{n, D}$ be given. We construct $\beta$ in the domain of $\Phi_{n, D}$ such that $\Phi_{n, D}(\beta) = \lambda\dim_{B}C_{n, D}$ where $\lambda \in [0, 1]$. The $\alpha = \pi(\alpha_{j})_{j=1}^{\infty}$ where $\alpha_{j}\in\Delta$. For any radius $r>0$, there exists an index $m$ at which any complex number of the form $\beta = 0.\alpha_{1}\alpha_{2}\ldots \alpha_{m}\beta_{m+1}\beta_{m+2}\ldots$ with $\beta_{j}\in\Delta$ is within distance $r$ of $\alpha$. We now describe how to choose the $\beta_{j}$ such that $\beta$ satisfies the desired properties. 

Suppose $0 < \lambda < 1$. There exists a sequence of integers $h_{j}$ such that $h_{j} \leq j\lambda < h_{j} + 1$. Since $\lambda < 1$ we see that $(j+1)\lambda < j\lambda + 1 < (h_{j}+1)+1$. It follows that either $h_{j+1} = h_{j}$ or $h_{j+1} = h_{j} + 1$. Let $d_{\text{max}}$ and $d_{\text{min}}$ denote the maximum and minimum of $D$ respectively. For all $j > m$, let 
\begin{equation}
\beta_{j} = \begin{cases}
d_{\text{max}} - d_{\text{min}} &\;\text{if}\; h_{j} = h_{j-1}, \\
0 &\;\text{if}\; h_{j} = h_{j-1} + 1. 
\end{cases}
\end{equation}

It follows that $|D\cap(D+\beta_{j})|$ is equal to either $1$ or $|D|$ for $j>m$. The key observation here is that in either case this is $|D|^{h_{j}-h_{j-1}}$. 

We can compute the lower box-counting dimension of $C(\beta) := C_{n, D} \cap (C_{n, D} + \beta)$ using either Theorem \ref{thm:boundedDigitFormula} or Theorem \ref{thm:unboundedDigitFormula}. Directly, 
\begin{align}
\underline{\dim}_{B}(C(\beta)) &= \liminf_{k\rightarrow\infty}\frac{\log{M_{k}(\beta)}}{k\log{|b|}} \\
&= \liminf_{k\rightarrow\infty}\frac{\log{\prod_{j=1}^{m}|D\cap(D+\alpha_{j})|}}{k\log{|b|}} + \liminf_{k\rightarrow\infty}\frac{\log{\prod_{j=m+1}^{k}|D\cap(D+\beta_{j})|}}{k\log{|b|}} \\
&= 0 + \liminf_{k\rightarrow\infty}\frac{\log{\prod_{j=m+1}^{k}|D|^{h_{j}-h_{j-1}}}}{k\log{|b|}} \\
&=  \liminf_{k\rightarrow\infty}\frac{(h_{k}-h_{m})\log{|D|}}{k\log{|b|}} \\
&= \alpha\dim_{B}C
\end{align}
We can see now that the limit infimum is in fact a limit. If $\lambda = 0$, then choose $\beta_{j} = d_{\text{max}} - d_{\text{min}}$ for all $j > m$. If $\lambda = 1$, then choose $\beta_{j} = 0$ for all $j > m$.
\end{proof}

It immediately follows that the function $\Phi_{n, D}$ is discontinuous everywhere on its domain.

We end this section by establishing versions of Corollary~\ref{cor:boxDense} for Hausdorff dimension and packing dimension. See Definition~\ref{def:hausDim} and Definition~\ref{def:packingDim} for details. 

\begin{definition} 
Fix an integer $n\geq2$ and suppose $D\subset\{0, 1, \ldots, n^{2}\}$. Let $C(\alpha) := C_{n, D} \cap (C_{n, D} + \alpha)$ for $\alpha \in \mathbb{C}$. We define the functions
\begin{equation}
 \begin{split}
    &\Psi_{n, D} : F_{n, D} \rightarrow [0, \dim_{H}C_{n, D}],\\
    &\Theta_{n, D} : F_{n, D} \rightarrow [0, \dim_{P}C_{n, D}],
  \end{split}
\quad
  \begin{split}
    &\Psi_{n, D}(\alpha) = \dim_{H}C(\alpha),\\
    &\Theta_{n, D}(\alpha) = \dim_{P}C(\alpha).
  \end{split}
\end{equation}
\end{definition}

Under the conditions of Corollary~\ref{cor:boxDense}, we can show that the level sets of these functions are also dense in $F_{n, D}$. 

\begin{definition} 
Let $(n_{k})_{k=1}^{\infty}$ be a sequence of positive integers, and let $(R_{k})_{k=1}^{\infty}$ be a sequence of vectors $(c_{k, 1}, c_{k, 2}, \ldots, c_{k, n_{k}})$ such that $0 < c_{k, j} < 1$ for all $k\geq1$ and $1 \leq j \leq n_{k}$. Let $J\subset\mathbb{R}^{m}$ be a compact set with nonempty interior. Let $\mathcal{F}$ denote a set of subsets of $\mathbb{R}^{m}$ indexed by $W := \{\emptyset\}\cup(\cup_{k=1}^{\infty}W_{k})$ where $W_{k} := \{(j_{1}, j_{2}, \ldots, j_{k}) : 1\leq j_{\ell} \leq n_{\ell}, 1\leq\ell\leq k\}$. Given two words $\sigma$ and $\tau$, we denote their concatenation by $\sigma * \tau$. 

The set $\mathcal{F}$ is said to \textit{satisfy the Moran structure given by $((n_{k})_{k=1}^{\infty}, (R_{k})_{k=1}^{\infty}, J)$} if it satisfies the following four conditions:

\begin{itemize} 
\item[(i)] $J_{\emptyset} = J$.
\item[(ii)] For any $\sigma\in W$, there exists a similarity $S_{\sigma}:\mathbb{R}^{m}\rightarrow\mathbb{R}^{m}$ such that $S_{\sigma}(J) = J_{\sigma}$. 
\item[(iii)] For any $k\geq 0$ and $\sigma\in W_{k-1}$, the interiors of $J_{\sigma*j}$ and $J_{\sigma*\ell}$ are disjoint for all $1 \leq j, \ell \leq n_{k}$ such that $j \neq \ell$. 
\item[(iv)]  For any $k\geq 0$ and $\sigma\in W_{k-1}$, $1 \leq j \leq n_{k}$, $\frac{\diam(J_{\sigma*j})}{\diam(J_{\sigma})} = c_{k, j}$. 
\end{itemize}

For $\mathcal{F}$ satisfying the Moran structure, set $E_{k} = \cup_{\sigma\in W_{k}}J_{\sigma}$ and $E = \cap_{k=1}^{\infty}E_{k}$. We call $E$ the \textit{Moran set associated with the collection $\mathcal{F}$}. 
\end{definition}

It is known that the Hausdorff dimension and packing dimension of a Moran set can be expressed in terms of a sequence derived from the ``contraction coefficients" $c_{k, j}$. 

\begin{theorem}[Hua S., Rao H., Wen Z., Wu J., \cite{HRWW00}, theorem 1.1]\label{thm:moranDim} 
Suppose $E$ is the Moran set associated with a collection $\mathcal{F}$ satisfying the Moran structure given by  $((n_{k})_{k=1}^{\infty}, (R_{k})_{k=1}^{\infty}, J)$. For each $k$, let $s_{k}$ be the solution to the equation $\sum_{\sigma\in W_{k}}c_{\sigma}^{s_{k}} = 1$ where $c_{\sigma} := \Pi_{\ell = 1}^{k}c_{\ell, j_{\ell}}$ for $\sigma = (j_{1}, j_{2}, \ldots, j_{k})$. If $\inf_{i, j}c_{i, j} > 0$, then
\begin{align}
\dim_{H}E &= \liminf_{k\rightarrow\infty}s_{k}, \\
\dim_{P}E &= \limsup_{k\rightarrow\infty}s_{k}. 
\end{align}
\end{theorem}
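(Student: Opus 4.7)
The plan is to sandwich $\dim_H E$ and $\dim_P E$ between a covering estimate on one side and a mass-distribution argument on the other. Everything rests on the identity that, by the very definition of $s_k$, the natural level-$k$ cover $\{J_\sigma\}_{\sigma\in W_k}$ of $E$ satisfies
\[
\sum_{\sigma\in W_k}(\diam J_\sigma)^{s_k} \;=\; (\diam J)^{s_k}\sum_{\sigma\in W_k}c_\sigma^{s_k} \;=\; (\diam J)^{s_k},
\]
so the cover is ``tight'' at exponent $s_k$ for every $k$. The hypothesis $\inf_{i,j}c_{i,j}>0$ will be used throughout to guarantee that the scales $\max_\sigma\diam J_\sigma$ and $\min_\sigma\diam J_\sigma$ at level $k$ are comparable up to a geometric factor, so that level-$k$ cells can be exchanged for balls at scale $r\asymp\max_\sigma\diam J_\sigma$.

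For the upper bounds, fix $s>\liminf_k s_k$. Along a subsequence $k_j\to\infty$ one has $s_{k_j}\le s$, and since each $c_\sigma<1$ this gives $\sum_{\sigma\in W_{k_j}}(\diam J_\sigma)^s\le(\diam J)^s$. Because $\max_\sigma\diam J_\sigma\to 0$, taking $j\to\infty$ yields $\mathcal H^s(E)\le(\diam J)^s<\infty$, so $\dim_H E\le s$; letting $s\downarrow\liminf_k s_k$ produces the Hausdorff upper bound. For packing, I would apply the same inequality for every $s>\limsup_k s_k$ and every sufficiently large $k$, combine with the trivial estimate $|W_k|(\min_\sigma c_\sigma)^s\le 1$, and then use $\inf c_{i,j}>0$ to translate a cell count at level $k$ into a box count at scale $\max_\sigma\diam J_\sigma$. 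This yields $\overline{\dim}_B E\le s$, and since $\dim_P E\le\overline{\dim}_B E$, we obtain $\dim_P E\le\limsup_k s_k$.

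For the lower bounds, I would construct a Borel probability measure $\mu$ on $E$ as a weak-$\ast$ limit of the natural approximants $\mu_k$ defined by $\mu_k(J_\sigma):=c_\sigma^{s_k}$ for $\sigma\in W_k$, which are probability measures on $E_k$ by the defining equation for $s_k$. For any $t<\liminf_k s_k$ one then argues that $\mu(J_\sigma)\le C\,c_\sigma^{t}$ with $C$ depending only on $t$ and $\inf c_{i,j}$, by bounding the extra factor $c_\sigma^{s_k-t}$ along the realising subsequence. Each ball $B(x,r)$ centred at $x\in E$ meets only a bounded number of cells $J_\sigma$ at the level for which $\diam J_\sigma\asymp r$ (again by $\inf c_{i,j}>0$), so the mass-distribution principle gives $\mathcal H^t(E)>0$ and hence $\dim_H E\ge\liminf_k s_k$. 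The packing lower bound proceeds analogously, this time along the subsequence realising $\limsup_k s_k$ and invoking the packing-measure version of the mass-distribution principle applied to the same $\mu$.

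The main obstacle is the lower-bound construction. Because the exponents $s_k$ vary with $k$, the family $\{\mu_k\}$ is not consistent in the Kolmogorov sense, and verifying that the uniform estimate $\mu(J_\sigma)\le C\,c_\sigma^{t}$ survives the weak-$\ast$ limit requires a careful comparison of $\sum_{\tau\in W_k}c_\tau^{t}$ with $\sum_{\tau\in W_k}c_\tau^{s_k}=1$ across successive levels. This is precisely where $\inf_{i,j}c_{i,j}>0$ is indispensable: without it, cell diameters could collapse arbitrarily between consecutive levels, so neither the ball-to-cell comparison in the mass-distribution step nor the boundedness of the ratio $c_\sigma^{s_k-t}$ would survive, and the argument would break down.
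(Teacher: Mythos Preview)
The paper does not prove this theorem at all: it is quoted verbatim as Theorem~1.1 of Hua--Rao--Wen--Wu \cite{HRWW00} and used as a black box in the proof of Theorem~\ref{thm:extDense}. There is therefore no ``paper's own proof'' to compare your proposal against.

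That said, your outline is essentially the standard strategy for Moran-set dimension formulas and is in the spirit of the original argument in \cite{HRWW00}: natural covers for the upper bounds, a mass-distribution argument for the lower bounds, with $\inf_{i,j}c_{i,j}>0$ providing the comparability of cell diameters at each level needed to pass between cells and balls. Two places deserve more care than your sketch indicates. First, the measures $\mu_k$ you define by $\mu_k(J_\sigma)=c_\sigma^{s_k}$ are genuinely inconsistent across levels, and passing to a weak-$\ast$ limit does not by itself give you control of $\mu(J_\sigma)$ in terms of $c_\sigma^t$; in \cite{HRWW00} one instead builds a single consistent mass distribution directly on the cylinder tree and then compares exponents. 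Second, for the packing lower bound the mass-distribution principle alone is not quite enough: one needs a local density estimate of the form $\liminf_{r\to 0}\mu(B(x,r))/r^t<\infty$ for $\mu$-a.e.\ $x$, applied along the subsequence realising $\limsup_k s_k$, and this requires the bounded-overlap property of level-$k$ cells (which does follow from the disjoint-interiors condition (iii) together with $\inf c_{i,j}>0$, but should be stated explicitly).
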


This is useful because the sets of the form $C_{n, D} \cap (C_{n, D} + \alpha)$ are Moran sets under conditions on $n, D,$ and $\alpha$. This allows us to state the following result. 

\begin{theorem} \label{thm:extDense} 
Fix an integer $n \geq 2$. Suppose either that $D \subset \{0, 1, \ldots, \lfloor n^{2}/2 \rfloor\}$ satisfies $|\delta - \delta^{'}| \neq 1$ for all $\delta, \delta^{'}\in\Delta := D-D$ or that $D\subset\{0, 1, \ldots, n^{2}\}$ is chosen such that $\Delta$ is sparse. The level sets of $\Psi_{n, D}$ and $\Theta_{n, D}$ are respectively dense in $F_{n, D}$.
\end{theorem}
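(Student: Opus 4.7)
The plan is to show that under the stated hypotheses, $C(\alpha)=C_{n,D}\cap(C_{n,D}+\alpha)$ is in fact a Moran set with uniform contraction ratio $|b|^{-1}$, apply Theorem~\ref{thm:moranDim} to read off its Hausdorff and packing dimensions in terms of the quantities $M_k(\alpha)$, and then recycle the construction from the proof of Corollary~\ref{cor:boxDense} to produce arbitrarily nearby translations with prescribed dimension.

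First I would fix $\alpha=\pi(\alpha_j)_{j=1}^{\infty}\in F_{n,D}$ with $\alpha_j\in\Delta$, set $J=T_n$, $n_k=|D\cap(D+\alpha_k)|$, and index the $k$-th level of the construction by the choices $d_j\in D\cap(D+\alpha_j)$. The associated sets are the $k$-tiles $J_\sigma=f_{d_1}\circ\cdots\circ f_{d_k}(T_n)$ with uniform contraction $c_{k,j}=|b|^{-1}$. Conditions (i), (ii) and (iv) of the Moran structure are immediate; condition (iii) follows because, in either hypothesis of the theorem, $D\cap(D+\alpha_j)\subset D$ still satisfies $|d-d'|\neq 1$ whenever $d\neq d'$, so by Corollary~\ref{cor:realNeighbours} the 1-tiles $f_d(T_n),f_{d'}(T_n)$ are disjoint (in particular have disjoint interiors), and this propagates to all levels. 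To identify the Moran attractor $E$ with $C(\alpha)$, the inclusion $C(\alpha)\subset E$ was essentially established in the proofs of Theorems~\ref{thm:boundedDigitFormula} and \ref{thm:unboundedDigitFormula}; for the reverse inclusion, any $z\in E$ admits an expansion $z=\pi(d_j)$ with $d_j\in D\cap(D+\alpha_j)$, so writing $d_j=d_j'+\alpha_j$ with $d_j'\in D$ gives $z\in C_{n,D}$ and $z-\alpha=\pi(d_j')\in C_{n,D}$, hence $z\in C(\alpha)$.

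Second, since $c_\sigma=|b|^{-k}$ for every $\sigma\in W_k$, the Moran equation $\sum_{\sigma\in W_k}c_\sigma^{s_k}=1$ collapses to $M_k(\alpha)|b|^{-ks_k}=1$, i.e.\ $s_k=\log M_k(\alpha)/(k\log|b|)$. Theorem~\ref{thm:moranDim} then gives
\[
\dim_H C(\alpha)=\liminf_{k\to\infty}\frac{\log M_k(\alpha)}{k\log|b|},\qquad \dim_P C(\alpha)=\limsup_{k\to\infty}\frac{\log M_k(\alpha)}{k\log|b|}.
\]
Note that the IFS defining $C_{n,D}$ itself satisfies the SSC (same real-neighbour argument applied to $D\subset D$), so by Theorem~\ref{thm:strongSimDim} $\dim_H C_{n,D}=\dim_P C_{n,D}=\dim_B C_{n,D}$, call this common value $s$.

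Finally, given $\alpha\in F_{n,D}$, $\lambda\in[0,1]$, and $r>0$, I would construct $\beta$ exactly as in the proof of Corollary~\ref{cor:boxDense}: agree with $\alpha$ on a long initial block and then alternate $\beta_j\in\{0,\,d_{\max}-d_{\min}\}$ so that $|D\cap(D+\beta_j)|\in\{1,|D|\}$ with frequencies forcing $\log M_k(\beta)/(k\log|b|)\to \lambda s$. The crucial point is that this construction produces an actual limit rather than merely a $\liminf$, so both $\liminf$ and $\limsup$ coincide with $\lambda s$; hence $\Psi_{n,D}(\beta)=\lambda\dim_H C_{n,D}$ and $\Theta_{n,D}(\beta)=\lambda\dim_P C_{n,D}$ simultaneously, establishing density of every level set. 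The only real obstacle is the bookkeeping in step one — confirming that the setup satisfies the Moran structure and, in particular, verifying $E=C(\alpha)$ — but this is a direct consequence of the disjointness lemmas already proved in Section~\ref{sec:neighbours}.
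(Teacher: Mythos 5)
Your proposal is correct and follows essentially the same route as the paper: realize $C(\alpha)$ as a Moran set with uniform ratio $|b|^{-1}$, apply Theorem~\ref{thm:moranDim} to identify $\dim_{H}$ with $\underline{\dim}_{B}$ and $\dim_{P}$ with $\overline{\dim}_{B}$, and then reuse the construction of Corollary~\ref{cor:boxDense}, whose $\liminf$ is an actual limit. The only (harmless) deviation is taking $J=T_{n}$ and its $k$-tiles rather than $J=C_{n,D}$ and its cylinders, which if anything fits the nonempty-interior requirement of the Moran definition more cleanly.
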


\begin{proof}
Let $b:= -n+i$. We begin by arguing that the set $C(\alpha) := C_{n, D} \cap (C_{n, D} + \alpha)$ is a Moran set whenever $\alpha\in F_{n, D}$. It follows from the definition of $F_{n, D}$ that $\alpha = \pi(\alpha_{j})_{j=1}^{\infty}$ where $\alpha_{j}\in\Delta$ for each $j$. Choose $(n_{k})_{k=1}^{\infty} = (|D\cap(D+\alpha_{k})|)_{k=1}^{\infty}$, $(R_{k})_{k=1}^{\infty}$ such that the entries of every vector are $|b|^{-1}$, and $J = C_{n, D}$. Let $\mathcal{F}_{k}$ denote the set of cylinders of the form $\pi(\{d_{1}\} \times \cdots \times \{d_{k}\} \times D^{\mathbb{N}})$ where $d_{j}\in D\cap(D+\alpha_{j})$ for $j = 1, 2, \ldots, k$. The set $C(\alpha)$ is then the Moran set associated with the collection $\cup_{k=1}^{\infty} \mathcal{F}_{k}$. Additionally, since the entries of the vector $R_{k}$ are the constant $|b|^{-1}$ for every $k$, the infimum across all entries of vectors in the sequence $(R_{k})_{k=1}^{\infty}$ is greater than zero. 

By Theorem~\ref{thm:moranDim}, we have $\dim_{H}C(\alpha) = \liminf_{k\rightarrow\infty} s_{k}$ where $s_{k}$ is the solution to $\sum_{\sigma\in W_{k}}c_{\sigma}^{s_{k}} = 1$. This is $(\Pi_{j=1}^{k}|D\cap(D+\alpha_{j})|)|b|^{-ks_{k}} = 1$. Solving this equation for $s_{k}$ reveals that $s_{k} = M_{k}(\alpha)$. Therefore $\dim_{H}C(\alpha) = \underline{\dim_{B}}C(\alpha)$. Similarly, $\dim_{P}C(\alpha) = \overline{\dim_{B}}C(\alpha)$. Recall that $s := \dim_{B}C(\alpha) = \dim_{H}C(\alpha) = \dim_{P}C(\alpha)$ since $C_{n, D}$ is self-similar (corollary 3.3 of \cite{F97}). We have that for all $\lambda \in [0, s]$, $\Phi_{n, D}^{-1}(\lambda)$ is a subset of both $\Psi_{n, D}^{-1}(\lambda)$ and $\Theta_{n, D}^{-1}(\lambda)$. The conclusion now holds by Corollary~\ref{cor:boxDense}. 
\end{proof}

\section{\textbf{Applications to Self-Similarity}} \label{section:selfSimSEP} 

In this section we prove Theorem~\ref{thm:sepSp} and Theorem~\ref{thm:sepGen}. Let us recall the notion of strong eventual periodicity for sequences of sets from the introduction. 

\begin{definition} 
A sequence $(a_{j})_{j\geq1}$ of integers is \textit{strongly eventually periodic} (SEP) if there exists a finite sequence $(b_{\ell})_{\ell = 1}^{p}$ and a nonnegative sequence $(c_{\ell})_{\ell = 1}^{p}$, where $p$ is a positive integer, such that
\begin{equation}
(a_{j})_{j\geq1} = (b_{\ell})\overline{(b_{\ell} + c_{\ell})_{\ell = 1}^{p}}, 
\end{equation}
where $\overline{(d_{\ell})_{\ell = 1}^{p}}$ denotes the infinite repetition of the finite sequence $(d_{\ell})_{\ell = 1}^{p}$.   
\end{definition}

The following is a convenient sufficient condition for the SEP property. 

\begin{lemma}\label{lem:monoSEP}
Let $(a_{j})_{j = 1}^{\infty}$ be a bounded sequence of nonnegative integers. If there exists a positive integer $q$ such that $a_{j} \leq a_{j + q}$ for all $j\geq1$, then $(a_{j})_{j\geq1}$ is SEP. 
\end{lemma}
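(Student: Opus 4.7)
The plan is to exploit the fact that the hypothesis decouples the sequence into $q$ subsequences (one per residue class mod $q$), each of which is nondecreasing, bounded, and integer-valued, and therefore eventually constant. Once each of these $q$ subsequences has stabilized, the full sequence is periodic with period $q$ from some index onward, which can be repackaged into the SEP form.

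First, for each $j \in \{1, 2, \ldots, q\}$, consider the subsequence $(a_{j+kq})_{k=0}^{\infty}$. By the hypothesis $a_i \leq a_{i+q}$, this subsequence is nondecreasing; by the boundedness of $(a_j)$, it is bounded above; and since its terms are nonnegative integers, it must be eventually constant. Let $K_j$ be the smallest index at which it stabilizes, and set
\begin{equation}
K := \max\{K_1, K_2, \ldots, K_q, 1\}, \qquad p := Kq.
\end{equation}

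Next, I will define the SEP data. Set $b_\ell := a_\ell$ for $1 \leq \ell \leq p$ and $c_\ell := a_{\ell+p} - a_\ell$ for $1 \leq \ell \leq p$. Writing $\ell = (k-1)q + j$ with $1 \leq j \leq q$ and $1 \leq k \leq K$, a repeated application of $a_i \leq a_{i+q}$ gives $a_\ell \leq a_{\ell+q} \leq \cdots \leq a_{\ell + Kq} = a_{\ell + p}$, so $c_\ell \geq 0$, as required by the definition of SEP.

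It then remains to verify the identity $(a_j)_{j\geq1} = (b_\ell)_{\ell=1}^{p}\,\overline{(b_\ell + c_\ell)_{\ell=1}^{p}}$. The first $p$ terms match by construction. For the $m$-th block of the repeating portion ($m \geq 1$) and for $\ell = (k-1)q + j$ as above, one has
\begin{equation}
a_{mp+\ell} = a_{j + (mK + k - 1)q}, \qquad b_\ell + c_\ell = a_{p+\ell} = a_{j + (K+k-1)q}.
\end{equation}
Since both $mK + k - 1$ and $K + k - 1$ are at least $K \geq K_j$, both indices lie in the stable regime of the $j$-th subsequence, so the two values coincide. This establishes the desired equality. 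I anticipate no substantive obstacle here — the argument is essentially just the observation that bounded monotone sequences of nonnegative integers must eventually be constant, applied to each of the $q$ residue classes.
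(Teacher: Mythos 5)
Your proof is correct and follows essentially the same route as the paper: split the sequence into its $q$ residue-class subsequences, use boundedness and monotonicity to get eventual constancy, take the prefix length $p$ to be a multiple of $q$ past all stabilization points, and read off the nonnegative increments $c_\ell = a_{\ell+p}-a_\ell$ (the paper obtains the same increments by telescoping step-by-step differences). No gaps; your write-up is if anything a bit cleaner in its bookkeeping than the paper's.
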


\begin{proof}
Since $a_{j}$ is bounded for all $j$, there exists a positive integer $q$ such that $a_{j + kp}$ is constant for all $k\geq q$ and $j = 1, 2, \ldots, q$. It follows that there exists a positive integer $m$ such that $(a_{j})_{j=1}^{\infty} = a_{1}a_{2}\ldots a_{mq}\overline{a_{1+mq}a_{2+mq}\ldots a_{q + mq}}$. 

For each $j = 1, 2, \ldots, q$, there exist non-negative integers $v_{j + (m-\ell)q}$ where $\ell = 1, 2, \ldots m$ such that
\begin{align}
a_{j + mq} &= a_{j+ (m-1)q} +v_{j + (m-1)q} \\
&= a_{j + (m-2)q} + v_{j + (m-1)q} + v_{j+(m-2)q} \\
&\;\;\vdots \\
&= a_{j} + \sum_{\ell = 1}^{m} v_{j + (m-\ell)q}. 
\end{align}
By choosing $u_{j + qk} = \sum_{\ell = 1}^{m-k}v_{j + (m-\ell)q}$ for $j = 1, 2, \ldots, q$ and $k = 0, 1, \ldots, m - 1$, we obtain $(a_{j})_{j\geq1} = (a_{1}a_{2}\ldots a_{m})\overline{(a_{1} + u_{1})(a_{2}+u_{2})\ldots (a_{mq}+u_{mq})}$. 
\end{proof}

The following fact about self-similar sets is also useful.

\begin{lemma}\label{lem:selfSimShift} 
Let $a$ be an element of $\mathbb{R}^{m}$. If $S\subset\mathbb{R}^{m}$ is self-similar and is the attractor of the IFS given by $\{f_{i}(x) = r_{i}x + u_{i}\}_{i=1}^{N}$ where $r_{i}$ is a linear transformation from $\mathbb{R}^{m}$ to $\mathbb{R}^{m}$ and $u_{i}\in\mathbb{R}^{m}$, then the translation $S + a := \{x + a : x\in S\}$ is also a self-similar set and is the attractor of the IFS $\{g_{i}(x) = f_{i}(x) + (1 - r_{i})a\}_{i=1}^{n}$. 
\end{lemma}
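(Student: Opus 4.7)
The plan is to verify directly that $S+a$ satisfies the fixed-point equation $\bigcup_{i=1}^{N} g_i(S+a) = S+a$, and then invoke Hutchinson's uniqueness of the attractor. The key observation is that the correction term $(1-r_i)a$ is precisely what compensates for translating the argument by $a$.

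First I would carry out the one-line algebraic check: for any $x \in S$,
\begin{align}
g_i(x+a) &= f_i(x+a) + (1-r_i)a \\
&= r_i(x+a) + u_i + a - r_i a \\
&= r_i x + u_i + a \\
&= f_i(x) + a.
\end{align}
Thus $g_i(S+a) = f_i(S) + a$. Taking the union over $i$ and using that $S$ is the attractor of $\{f_i\}_{i=1}^{N}$,
\begin{equation}
\bigcup_{i=1}^{N} g_i(S+a) = \bigcup_{i=1}^{N}\bigl(f_i(S) + a\bigr) = \Bigl(\bigcup_{i=1}^{N} f_i(S)\Bigr) + a = S + a.
\end{equation}

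Next I would observe that each $g_i$ has the same linear part $r_i$ as $f_i$, so it is a contraction (in fact a similarity when $f_i$ is) with the same contraction ratio. Since $\{g_i\}_{i=1}^{N}$ is an IFS on the complete space $\mathbb{R}^{m}$, by the Hutchinson existence and uniqueness theorem cited in the introduction, it has a unique compact attractor; the compact set $S+a$ satisfies the invariance equation, so it must be that attractor. This also shows $S+a$ is self-similar whenever the $f_i$ were similarities.

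There is no real obstacle: the only thing to be careful about is that $g_i$ genuinely is a contraction with the same Lipschitz constant as $f_i$ (which is immediate since they differ by a translation), so the hypotheses of Hutchinson's theorem apply to $\{g_i\}_{i=1}^{N}$. The statement is essentially a conjugation of the IFS $\{f_i\}$ by the translation $x \mapsto x + a$, and the formula $g_i = \tau_a \circ f_i \circ \tau_{-a}$ (where $\tau_c(x) = x+c$) makes the whole argument transparent.
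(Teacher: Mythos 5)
Your proposal is correct and follows essentially the same route as the paper: verify the invariance equation $\bigcup_{i=1}^{N} g_i(S+a) = \bigcup_{i=1}^{N}\bigl(f_i(S)+a\bigr) = S+a$ and conclude by uniqueness of the attractor. The paper's proof is just a more compressed version of the same computation, so there is nothing to add.
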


\begin{proof}
Since $S$ is a nonempty compact set, so is $S + a$. Since attractors are unique we need only observe that $\cup_{i=1}^{N}g_{i}(S + a) = \cup_{i=1}^{N}(f_{i}(S) + a) = (\cup_{i=1}^{N}f_{i}(S)) + a = S + a$. 
\end{proof}

Fix an integer $n\geq2$ and suppose $D \subset \{0, 1, \ldots, n^{2}\}$. Given $\alpha = \pi(\alpha_{j})_{j=1}^{\infty}$ with $\alpha_{j}\in D-D$, we wish to relate the self-similarity of $C(\alpha) := C_{n, D} \cap (C_{n, D} + \alpha)$ to the condition that $(m - |\alpha_{j}|)_{j=1}^{\infty}$ is SEP. The first step is to express $C(\alpha)$ in terms of the sequence of sets $(D\cap(D + \alpha_{j}))_{j=1}^{\infty}$. Recall that $E_{n, D}$ denotes the extended restricted digit set generated by $(n, D)$ (see Definition~\ref{def:ExtDigitSet}). 

\begin{lemma} \label{lem:seqExpress} 
Fix an integer $n\geq2$ and suppose $D \subset \{0, 1, \ldots, n^{2}\}$. If $\alpha\in E_{n, D}$ has a unique radix expansion in base $(b, D - D)$, then $C_{n, D} \cap (C_{n, D} + \alpha) = \pi(\Pi_{j=1}^{\infty}D\cap(D + \alpha_{j}))$. 
\end{lemma}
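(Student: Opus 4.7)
The plan is to prove the two set inclusions directly by unpacking the definitions of $C_{n,D}$ and of the $b$-coding map, with the uniqueness hypothesis on the radix expansion of $\alpha$ used only in one direction.

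For the inclusion $C_{n,D}\cap(C_{n,D}+\alpha)\subseteq \pi\bigl(\prod_{j=1}^{\infty}D\cap(D+\alpha_j)\bigr)$, I would start with an arbitrary $z$ in the left-hand side and write $z=\pi(d_j)_{j=1}^{\infty}$ and $z-\alpha=\pi(d_j')_{j=1}^{\infty}$ with $d_j,d_j'\in D$. Subtracting gives $\alpha=\pi(d_j-d_j')_{j=1}^{\infty}$, a radix expansion of $\alpha$ in base $(b,D-D)$ because each $d_j-d_j'\in D-D$. By the uniqueness hypothesis this must coincide termwise with $(\alpha_j)_{j=1}^{\infty}$, so $d_j=d_j'+\alpha_j$ for every $j$. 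Hence $d_j\in D\cap(D+\alpha_j)$ for all $j$, which places $z$ in the claimed image.

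The reverse inclusion is a direct construction that does not require uniqueness. Given $z=\pi(d_j)_{j=1}^{\infty}$ with $d_j\in D\cap(D+\alpha_j)$, immediately $z\in C_{n,D}$ since each $d_j\in D$. For each $j$ pick $d_j'\in D$ with $d_j=d_j'+\alpha_j$; then by linearity of the coding map,
\begin{equation}
z=\pi(d_j)_{j=1}^{\infty}=\pi(d_j')_{j=1}^{\infty}+\pi(\alpha_j)_{j=1}^{\infty}=\pi(d_j')_{j=1}^{\infty}+\alpha,
\end{equation}
so $z\in C_{n,D}+\alpha$.

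The only nontrivial step is the appeal to uniqueness of the expansion $\alpha=\pi(\alpha_j)_{j=1}^{\infty}$ in base $(b,D-D)$ in the first inclusion, and this is exactly the hypothesis. The linearity/continuity of $\pi_b$ needed for the second inclusion is routine from absolute convergence of $\sum d_j b^{-j}$ on the bounded digit set $D-D$. I anticipate no real obstacle; the only care needed is to make sure that the sequence $(d_j-d_j')_{j=1}^{\infty}$ is genuinely a valid radix expansion (its digits lie in $D-D$), so that the uniqueness hypothesis applies to force termwise equality with $(\alpha_j)_{j=1}^{\infty}$.
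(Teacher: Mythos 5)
Your proposal is correct and follows essentially the same route as the paper: express $z$ and $z-\alpha$ as radix expansions with digits in $D$, subtract to get an expansion of $\alpha$ in base $(b,D-D)$, and invoke the uniqueness hypothesis to force termwise equality with $(\alpha_j)$. The paper leaves the reverse inclusion to the reader, and your explicit write-up of it is the expected routine argument.
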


\begin{proof}
Assume that $\alpha = \pi(\alpha_{j})_{j=1}^{\infty}$ where $\alpha_{j} \in \Delta:=D-D$ for all $j$ and $(\alpha_{j})_{j=1}^{\infty}$ is unique. Let $C(\alpha)$ denote $C_{n, D} \cap (C_{n, D} + \alpha)$. If $z\in C(\alpha)$, then $z \in C_{n, D}$ and $z \in C_{n, D} + \alpha$. On one hand, $z = \pi(z_{j})_{j=1}^{\infty}$ with $z_{j}\in D$ for all $j$. On the other, there exists $y \in C_{n, D}$ such that $z = y + \alpha$. Similarly, $y = \pi(y_{j})_{j=1}^{\infty}$ where $y_{j}\in D$ for all $j$. We conclude that $\alpha = \pi(z_{j} - y_{j})_{j=1}^{\infty}$ where $z_{j} - y_{j} \in \Delta$ for all $j$. Since the radix expansion in base $(b, \Delta)$ of $\alpha$ is unique, we conclude that $z_{j} - y_{j} = \alpha_{j}$ and, in particular, $z_{j} = y_{j} + \alpha_{j}$ for all $j$. This means that $z_{j}\in D\cap(D+\alpha_{j})$ for all $j$. We leave the inclusion $\pi(D\cap(D + \alpha_{j})) \subset C(\alpha)$ for the reader.
\end{proof}

Lemma~\ref{lem:seqExpress} will be used in the proofs of Theorem~\ref{thm:sepSp} and Theorem~\ref{thm:sepGen}. In the case of Theorem~\ref{thm:sepSp}, we transform the set $C_{n, D} \cap (C_{n, D} + \alpha)$ once more. 

\begin{definition}
Fix an integer $n\geq2$, $D = \{0, m\}$, $2\leq m \leq n^{2}$, and choose $\alpha \in E_{n, D}$ such that $\alpha$ has a unique radix expansion $\pi(\alpha_{j})_{j=1}^{\infty}$ in base $(b, D - D)$. We call $\gamma := \pi(\gamma_{j})_{j=1}^{\infty}$, where $\gamma_{j} := \min(D \cap (D + \alpha_{j}))$ for each $j$, the \textit{minimal element of $C_{n, D} \cap (C_{n, D} + \alpha)$}. 
\end{definition}

\begin{lemma}\label{lem:shiftForm} 
Fix an integer $n\geq2$ and suppose $D = \{0, m\}$ where $2\leq m \leq n^{2}$. If $\alpha \in E_{n, D}$ has a unique radix expansion $\pi(\alpha_{j})_{j=1}^{\infty}$ in base $(b, D - D)$ and $\gamma$ is the minimal element of $C(\alpha) := C_{n, D} \cap (C_{n, D} + \alpha)$. Then $C(\alpha) - \gamma = \{\pi(z_{j})_{j=1}^{\infty} : z_{j}\in D, z_{j} \leq m - |\alpha_{j}|\}$ and $C(\alpha) - \gamma$ is a subset of $C_{n, D}$. 
\end{lemma}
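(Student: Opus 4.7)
The plan is to apply Lemma~\ref{lem:seqExpress} to obtain a coordinate-wise description of $C(\alpha)$, and then run a short case analysis over the three possible values of $\alpha_{j}\in D-D = \{-m, 0, m\}$.

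First, by Lemma~\ref{lem:seqExpress}, the uniqueness of the radix expansion of $\alpha$ gives
\[
C(\alpha) = \pi\Bigl(\prod_{j=1}^{\infty} D \cap (D + \alpha_{j})\Bigr).
\]
All digits involved are bounded by $m$, so the defining series converge absolutely, and for $\pi(w_{j})_{j=1}^{\infty}\in C(\alpha)$ one may write $\pi(w_{j})_{j=1}^{\infty} - \gamma = \pi(w_{j}-\gamma_{j})_{j=1}^{\infty}$ termwise. Setting $z_{j} := w_{j} - \gamma_{j}$, it therefore suffices to verify that at each index the set of admissible $z_{j}$ coincides with $\{z \in D : z \leq m-|\alpha_{j}|\}$.

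A direct case analysis handles this. If $\alpha_{j} = 0$, then $D\cap(D+\alpha_{j}) = \{0, m\}$ and $\gamma_{j} = 0$, so $z_{j}$ ranges over $\{0, m\} = D$, matching the bound $z_{j} \leq m = m - |\alpha_{j}|$. If $\alpha_{j} = m$, then $D\cap(D+m) = \{m\}$ and $\gamma_{j} = m$, forcing $z_{j} = 0$, which matches $z_{j}\leq 0 = m - |\alpha_{j}|$. If $\alpha_{j} = -m$, then $D\cap(D-m) = \{0\}$ and $\gamma_{j} = 0$, again forcing $z_{j} = 0 = m - |\alpha_{j}|$. In every case $z_{j}\in D$, so $\pi(z_{j})_{j=1}^{\infty}\in C_{n, D}$ by definition, yielding the inclusion $C(\alpha)-\gamma\subset C_{n, D}$.

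The only point requiring any care is the justification of termwise subtraction under $\pi$, which is immediate from absolute convergence of the series since the $w_{j}$ and $\gamma_{j}$ are uniformly bounded by $m$; otherwise the lemma reduces to a bookkeeping check over the three elements of $D-D$.
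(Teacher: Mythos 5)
Your proposal is correct and follows essentially the same route as the paper: invoke Lemma~\ref{lem:seqExpress} to write $C(\alpha)$ coordinate-wise, subtract $\gamma$ termwise, and check via the same three-case analysis on $\alpha_{j}\in\{-m,0,m\}$ that $(D\cap(D+\alpha_{j}))-\gamma_{j}$ coincides with $\{z\in D : z\leq m-|\alpha_{j}|\}$ and lies in $D$. The only difference is cosmetic: you make the termwise-subtraction step and the computation of $D\cap(D\pm m)$ explicit, which the paper leaves implicit.
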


\begin{proof}
By Lemma~\ref{lem:seqExpress}, we can equate $C(\alpha)$ and $\pi(\Pi_{j=1}^{\infty}D\cap(D + \alpha_{j}))$. It follows that $C(\alpha) - \gamma$ is equal to $\pi(\Pi_{j=1}^{\infty}(D\cap(D + \alpha_{j})) - \gamma_{j})$. 

It is sufficient to show that for any $j\geq1$, the condition $z_{j} \leq m - |\alpha_{j}|$ and $z_{j}\in D$, holds if and only if $z_{j}$ is an element of $(D \cap (D + \alpha_{j})) - \gamma_{j}$. There are three possible values for $\alpha_{j}$ for each $j$. If $\alpha_{j} = m$, $(D \cap (D + \alpha_{j})) - \gamma_{j} = \{0\}$ and $m - |\alpha_{j}| = 0$ and it is now clear that the bi-implication holds. If $\alpha_{j} = -m$, we obtain the same equations. When $\alpha_{j} = 0$, $(D \cap (D + \alpha_{j})) - \gamma_{j} = \{0, m\}$ and $m - |\alpha_{j}| = m$. Since $\{0, m\} \cap \{z_{j} : z_{j} \leq m\}$ is equal to $\{0, m\}$, the bi-implication holds for this case too. 

Lastly, from these calculations we see that $(D\cap(D+\alpha_{j})) - \gamma_{j}$ is a subset of $\{0, m\} = D$ for all $j$, no matter the value . 
\end{proof}

We state an extended version of Theorem~\ref{thm:sepSp}. 

\begin{theorem}\label{thm:selfSimSEPSp} 
Fix an integer $n\geq 2$ and let $b:= -n+i$. Suppose $D = \{0, m\}$ where $2 \leq m \leq n^{2}$ and that $\alpha\in E_{n, D}$ is chosen such that $\alpha$ has a unique radix expansion in base $(b, \{0, \pm m\})$. Let $\gamma$ be the minimal element of $C(\alpha) := C_{n, D} \cap (C_{n, D} + \alpha)$. 

If $C(\alpha)$ is self-similar and is the attractor of an IFS containing the similarity $f(x) = rx + (1 - r)\gamma$, then $(m -|\alpha_{j}|)_{j=1}^{\infty}$ is SEP. 

Conversely, if $(m -|\alpha_{j}|)_{j=1}^{\infty}$ is SEP and so by definition can be written as $(a_{\ell})_{\ell=1}^{p}\overline{(a_{\ell} + u_{\ell})_{\ell=1}^{p}}$, then $C(\alpha)$ is self-similar and is the attractor of the IFS containing all maps of the form 
\begin{equation*}
f(x) = b^{-p}(x + \sum_{\ell=1}^{p}(y_{\ell}b^{p-\ell} + z_{\ell}b^{-\ell})-\gamma)+\gamma
\end{equation*}
where for each $\ell$, $y_{\ell}, z_{\ell}\in\{0, m\}$ such that $y_{\ell} \leq a_{\ell}$ and $z_{\ell} \leq u_{\ell}$.
\end{theorem}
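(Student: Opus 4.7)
The plan is to translate $C(\alpha)$ by $-\gamma$ so that the fixed-point assumption on $f$ becomes a pure scaling relation. By Lemma~\ref{lem:selfSimShift}, $S := C(\alpha) - \gamma$ is self-similar with the translated IFS, and the map $f(x) = rx + (1-r)\gamma$ becomes $g(y) = ry$ in an IFS for $S$. Applying Lemma~\ref{lem:shiftForm}, $S$ is exactly the set of $\pi(z_j)$ with $z_j \in \{0,m\}$ and $z_j \leq m - |\alpha_j|$, so the condition $rS \subset S$ becomes a combinatorial condition on admissible digit sequences. Both directions of the theorem are then phrased in this digit-sequence picture.

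For the forward direction, I would first argue that $r$ must equal $b^{-p}$ for some positive integer $p$. Since $rS \subset S \subset C_{n, D}$ and $0 \in S$, the image $rS$ is a scaled copy of $S$ anchored at the origin; matching this scaled copy against the level-$p$ tile hierarchy of $C_{n, D}$, together with the uniqueness of the radix expansion of $\alpha$, should force $r = b^{-p}$. Once $r = b^{-p}$ is established, the inclusion $b^{-p} S \subset S$ translates via the coding $b^{-p}\pi(z_j) = \pi(z'_j)$, where $z'_j = 0$ for $j \leq p$ and $z'_j = z_{j-p}$ for $j > p$. Applied to the maximal sequence $z_j = m - |\alpha_j|$, this yields $m - |\alpha_k| \leq m - |\alpha_{k+p}|$ for all $k \geq 1$, and Lemma~\ref{lem:monoSEP} then delivers the SEP property.

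For the backward direction, write $(m - |\alpha_j|)_{j=1}^{\infty} = (a_\ell)_{\ell=1}^{p}\overline{(a_\ell + u_\ell)_{\ell=1}^{p}}$ and let $\mathcal{G}$ denote the proposed family of similarities. I would verify first that $f(C(\alpha)) \subset C(\alpha)$ for each $f \in \mathcal{G}$: expanding $f(\pi(w_j) + \gamma)$ gives a radix expansion with digit $y_\ell$ at position $\ell$ for $1 \leq \ell \leq p$ (admissible since $y_\ell \leq a_\ell$), digit $z_{k-p} + w_{k-p}$ at position $k$ for $p+1 \leq k \leq 2p$ (which is a valid $\{0, m\}$-digit because $z_\ell + w_\ell \leq u_\ell + a_\ell = m - |\alpha_{p+\ell}| \leq m$ excludes the forbidden value $2m$), and digit $w_{k-p}$ at position $k > 2p$ (admissible by the $p$-periodicity of $(m - |\alpha_j|)$ past position $p$). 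I would then verify that each point $\pi(w_j) + \gamma \in C(\alpha)$ is hit by some $f \in \mathcal{G}$: set $y_\ell := w_\ell$, choose $z_\ell \in \{0, m\}$ so that $w'_\ell := w_{p+\ell} - z_\ell$ lies in $\{0, m\}$ with $w'_\ell \leq a_\ell$ and $z_\ell \leq u_\ell$ (a quick case analysis on whether $u_\ell = 0$ or $u_\ell = m$ shows that $w_{p+\ell} \leq a_\ell + u_\ell$ always permits such a split), and define $x := \pi(w') + \gamma$ with $w'_k := w_{p+k}$ for $k > p$; then $f(x) = \pi(w_j) + \gamma$.

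The main obstacle is the opening step of the forward direction: deducing $r = b^{-p}$ from $r S \subset S$. A priori $r$ is just a complex contraction, and it is not evident that the only admissible scaling factors are powers of $b^{-1}$. I expect this step to require a careful exploitation of the uniqueness of the radix expansion of $\alpha$, together with the neighbour classification of Section~\ref{sec:neighbours}, to rule out any rotation or any scaling of non-$|b|$-adic magnitude that would otherwise produce a digit mismatch when computing $r \cdot M$ for the maximal element $M$ of $S$. The remaining steps are essentially bookkeeping with the SEP decomposition and base-$b$ radix expansions.
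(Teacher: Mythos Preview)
Your backward direction is essentially the paper's argument, just phrased in slightly different order; the bookkeeping with the SEP decomposition and the use of Lemmas~\ref{lem:shiftForm} and~\ref{lem:selfSimShift} is the same.

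The forward direction is where you diverge. You correctly flag the step ``$r = b^{-p}$'' as the main obstacle, and in fact the paper never proves this --- it is not needed, and there is no reason to expect it to be true in general. The paper's trick is to extract a base-$b$ expansion of $r$ itself rather than pin down its exact value. Since $(m-|\alpha_j|)$ has a nonzero entry, $S$ contains $mb^{-q}$ for some $q$; then $g(mb^{-q}) = r\,mb^{-q}$ lies in $S \subset C_{n,D}$ and hence equals $\pi(m r_{1,j})_{j\ge 1}$ for some binary sequence $r_{1,j}\in\{0,1\}$, giving
\[
r = \sum_{j\ge 1} r_{1,j}\,b^{-j+q},
\]
with no claim that only one $r_{1,j}$ is nonzero. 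Now apply $g$ to the single-term element $(m-|\alpha_s|)b^{-s}$ for each $s$: the image has digits $(m-|\alpha_s|)r_{1,j}\in\{0,m\}$, and Lemma~\ref{lem:sepOne} forces this to coincide with its unique base-$(b,D)$ expansion as an element of $S$. Picking any index $t>q$ with $r_{1,t}=1$ (such $t$ exists because $|r|<1$ rules out $r$ being a nonzero Gaussian integer), the digit at position $s+t-q$ equals $m-|\alpha_s|$ on one hand and is at most $m-|\alpha_{s+(t-q)}|$ on the other, by membership in $S$. Setting $p:=t-q$ gives $m-|\alpha_s|\le m-|\alpha_{s+p}|$ for all $s$, and Lemma~\ref{lem:monoSEP} finishes.

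So the idea you were missing is that one does not need $r$ to be a pure power of $b^{-1}$; a single nonzero digit $r_{1,t}$ in its base-$b$ expansion already yields the shift $p$. This sidesteps the rigidity argument you anticipated entirely, and you should not expect the neighbour classification of Section~\ref{sec:neighbours} to deliver $r=b^{-p}$.
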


We follow the proof strategy for theorem 1.2 in \cite{LYZ11}.

\begin{proof} 
Assume that $(m-|\alpha_{j}|)_{j=1}^{\infty}$ is an SEP sequence of integers. By definition there exist integers $a_{1}, a_{2}, \ldots, a_{p}$ and $u_{1}, u_{2}, \ldots, u_{p}$ such that $(m-|\alpha_{j}|)_{j\geq1} = a_{1}\ldots a_{p}\overline{(a_{\ell}+u_{\ell})_{\ell = 1}^{p}}$. 

By Lemma~\ref{lem:shiftForm}, $C(\alpha, \gamma) := C(\alpha) - \gamma$ is equal to $\{\pi(z_{j})_{j=1}^{\infty} : z_{j}\in D, z_{j} \leq m - |\alpha_{j}|\}$. 

Suppose $z$ is an element of $C(\alpha, \gamma)$. Therefore $z = \sum_{j=1}^{\infty}z_{j}b^{-j}$ with $z_{j}\leq (m-|\alpha_{j}|)$, $z_{j}\in\{0, m\}$ for each $j$. Let us organize the expansion as
\begin{equation}
z = \sum_{ell=1}^{p} x_{\ell}b^{-\ell} + \sum_{k=1}^{\infty}\sum_{\ell=1}^{p}x_{kp+\ell}b^{-(kp + \ell)}
\end{equation}
where $x_{\ell} \leq a_{\ell}$ and $x_{kp+\ell}\leq a_{\ell} + u_{\ell}$ for $\ell = 1, 2, \ldots, p$ and integers $k\geq1$. It follows that $x_{kp+\ell}$ can be decomposed into a sum $y_{kp+\ell} + z_{kp + \ell}$ where $y_{kp+\ell}, z_{kp + \ell}\in\{0, m\}$. Let us relabel $x_{\ell}$ as $y_{\ell}$ for $\ell = 1, 2, \ldots, p$. After making this substitution we can rearrange the terms to obtain 
\begin{equation}\label{eq:selfSimForm}
z =\sum_{k=1}^{\infty}\sum_{\ell=1}^{p}(y_{(k-1)p +\ell}b^{p-\ell} + z_{kp+\ell}b^{-\ell})b^{-kp}. 
\end{equation}
We now see that $z$ is an element of the attractor of the IFS $\{h(x) = b^{-p}(x + \sum_{\ell = 1}^{p}(y_{\ell}b^{p-\ell}+z_{\ell}b^{-\ell})) : y_{\ell}\leq a_{\ell}, z_{\ell}\leq u_{\ell}$, $y_{\ell}, z_{\ell}\in\{0, m\}$. We leave the other inclusion to the reader. By Lemma~\ref{lem:selfSimShift}, $C(\alpha)$ is self-similar because $C(\alpha, \beta)$ is self-similar.

Now let us assume $C(\alpha)$ is self-similar and is generated by an IFS containing the map $f_{1}(x) = r_{1}x + (1-r_{1})\gamma$. It follows from Lemma~\ref{lem:selfSimShift} that $C(\alpha, \gamma)$ is self similar and is generated by an IFS containing the map $g_{1}(x) = r_{1}x$. Observe that $\overline{0}$ is trivially SEP.
Suppose at least one of the entries of $(m-|\alpha_{j}|)_{j\geq1}$ is non-zero. 

For the sake of simple notation, let $C(\alpha, \gamma) := C(\alpha) - \gamma$. It follows from Lemma~\ref{lem:shiftForm} that $C(\alpha, \gamma)$ contains $mb^{-q}$ for some positive integer $q$. Therefore $mb^{-q}r_{1} = g_{1}(mb^{-q})$ is an element of $C(\alpha, \gamma)$ and, in particular, can be expressed as $\pi(mr_{1, j})_{j=1}^{\infty}$ where $r_{1, j}\in\{0, 1\}$. Isolating for $r_{1}$ yields
\begin{equation}
r_{1} = \sum_{j=1}^{\infty}r_{1, j}b^{-j+q}. 
\end{equation}

For every $s\geq1$, we have that
\begin{align}
\tau &:= g_{1}((m-|\alpha_{s}|)b^{-s}) \\
 &= (m-|\alpha_{s}|)r_{1, 1}b^{q-1-s} + \cdots+(m-|\alpha_{s}|)r_{1, q-s} + \sum_{j=q - s +1}^{\infty}(m-|\alpha_{s}|)r_{1, j}b^{-(j+s)} \label{eq:gImage}
\end{align}
is an element of $C(\alpha, \gamma)$. By Lemma~\ref{lem:shiftForm}, it is also an element of $C$. Therefore $\tau = \pi(d_{j})_{j=1}^{\infty}$ where $d_{j}\in\{0, m\}$. We now argue that the expansion in (\ref{eq:gImage}) is $\pi(d_{j})_{j=1}^{\infty}$. If $s > q - 1$, the desired result immediately follows from Lemma~\ref{lem:sepOne}. Suppose $s \leq q - 1$.  The number $\tau/b^{q-1-s}$ is equal to $\sum_{j=1}^{\infty}d_{j}b^{-j-(q-1-s)}$ and is also an element of $C$. It follows from Lemma~\ref{lem:sepOne} that $r_{1, 1} = r_{1, 2} = \cdots = r_{1, q-s} = 0$ and $(m-|\alpha_{s}|)r_{1, q-s+j} = d_{j}$ for all $j\geq1$. We obtain the desired result by multiplying back by $b^{q-1-s}$. 

Observe that $r_{1} \neq 0$ implies that $r_{1, t} = 1$ for some $t\geq1$. In particular, since $|r_{1}| < 1$, it must be that we can choose $t > q$. Otherwise, $r_{1}$ is a nonzero Gaussian integer and thus has magnitude greater than one. Fix such a $t$. It follows from the discussion above that $m - |\alpha_{s}| = d_{s-q+t}$ for all $s\geq 1$. Since $\tau$ is an element of $C(\alpha, \gamma)$, it follows from Lemma~\ref{lem:shiftForm} that $d_{s-q+t} \leq m - |\alpha_{s + (t - q)}|$ for all $s\geq1$. Therefore $m - |\alpha_{s}|  \leq m - |\alpha_{s + p}|$ for all $s\geq 1$ where $p:= t - q$ is a positive integer. We conclude that the sequence $(m - |\alpha_{j}|)_{j = 1}^{\infty}$ is SEP by Lemma~\ref{lem:monoSEP}. 
\end{proof}

Recall that for a fixed integer $n\geq2$ and a set $D \subset \{0, 1, \ldots, n^{2}\}$, the set $\Delta := D - D$ is called sparse if for all $\delta \neq \delta^{'}\in\Delta$ either $|\delta - \delta^{'}| > 2$ when $n\geq5$ or $|\delta-\delta^{'}| > 3$ when $n = 2, 3$ or $4$. In the statement of Theorem~\ref{thm:selfSimSEPSp}, if $m\geq4$, then $\Delta := D - D$ would be sparse and it would follow from Lemma~\ref{lem:extsep} that a radix expansion in base $(b, \Delta)$ of any $\alpha \in E_{n, D}$ is unique. The condition that $\Delta$ be sparse is stronger than the assumption that $\alpha$ is chosen to have a unique radix expansion. This stronger assumption does let us treat sets of digits $D$ that contain more than two elements. To state that the theorem, we recall the strong eventual periodicity of sequences of sets. 

\begin{definition} 
A sequence $(A_{j})_{j=1}^{\infty}$ of nonempty subsets of the integers is called \textit{strongly eventually periodic} (SEP) if there exist two finite sequences of sets $(B_{\ell})_{\ell = 1}^{p}$ and $(C_{\ell})_{\ell = 1}^{p}$, where $p$ is a positive integer, such that 
\begin{equation}
(A_{j})_{j=1}^{\infty} = (B_{\ell})_{\ell=1}^{p}\overline{(B_{\ell} + C_{\ell})_{\ell = 1}^{p}}, 
\end{equation}
where $B + C = \{b + c : b\in B, c\in C\}$ and $\overline{(D_{\ell})_{\ell = 1}^{p}}$ denotes the infinite repetition of the finite sequence of sets $(D_{\ell})_{\ell = 1}^{p}$. 
\end{definition}

\begin{remark} 
There is a connection between the SEP property for sequences of sets and the SEP property of sequences of integers. If the sequence $(m - |\alpha_{j}|)_{j=1}^{\infty}$ is SEP, then so is $(1 - |\alpha_{j}|/m)_{j=1}^{\infty}$. For each $j$, $1-|\alpha_{j}|/m = |D\cap(D+\alpha_{j})| - 1$. It can be shown that if $(|D_{j}| - 1)_{j=1}^{\infty}$ is SEP and that each $D_{j}$ is an arithmetic progression with a common step size for all $j$, then $(D_{j} - \gamma_{j})_{j=1}^{\infty}$ is SEP where $\gamma_{j} := \min D_{j}$. In the context of Theorem~\ref{thm:selfSimSEPSp}, it follows that $(D\cap(D+\alpha_{j}))_{j=1}^{\infty}$ is an SEP sequence of sets.
\end{remark}

The following is a convenient sufficient condition for the SEP property that is analogous to lemma~\ref{lem:monoSEP}.  

\begin{lemma}\label{lem:sepPP} 
Let $(A_{j})_{j=1}^{\infty}$ be a sequence of subsets of $\mathbb{Z}$ such that there exists $q$ such that $A_{j} \subset A_{j + q}$ and a bound, for all $j$, on the cardinality of $A_{j}$. If there exists a sequence of subsets $(U_{j})_{j\geq1}$ of $\mathbb{Z}$ such that $A_{j} + U_{j} = A_{j+q}$, then $(A_{j})_{j=1}^{\infty}$ is SEP. 
\end{lemma}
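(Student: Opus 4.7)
The plan is to mimic the proof of Lemma~\ref{lem:monoSEP}, replacing integer sums by Minkowski sums of subsets of $\mathbb{Z}$. First, I would promote the nesting $A_{j}\subset A_{j+q}$ together with the uniform cardinality bound to set-level stabilization: for each residue $j_{0}\in\{1,\ldots,q\}$, the chain $(A_{j_{0}+kq})_{k\geq0}$ is a nested sequence of finite sets whose cardinalities are bounded and nondecreasing, hence eventually constant; a finite set contained in another of equal cardinality must coincide with it. Taking the maximum stabilization index across the $q$ residue classes produces an integer $M$ such that $A_{j_{0}+kq}=A_{j_{0}+Mq}$ for every $j_{0}\in\{1,\ldots,q\}$ and every $k\geq M$.

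Next, I would set $p:=Mq$, $B_{\ell}:=A_{\ell}$, and define the Minkowski sums
\[
C_{\ell}:=U_{\ell}+U_{\ell+q}+\cdots+U_{\ell+(M-1)q}
\]
for $\ell=1,\ldots,p$. Iterating the identity $A_{j}+U_{j}=A_{j+q}$ along the arithmetic progression $\ell,\ell+q,\ldots,\ell+(M-1)q$ and using associativity of Minkowski addition yields $B_{\ell}+C_{\ell}=A_{\ell+Mq}=A_{\ell+p}$, which supplies the prefix and the first period of the desired SEP decomposition.

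To establish strict periodicity from index $p+1$ onward, I would write any $j>p$ as $j=\ell+kp$ with $\ell\in\{1,\ldots,p\}$ and $k\geq 1$, and further decompose $\ell=\ell_{0}+k_{0}q$ with $\ell_{0}\in\{1,\ldots,q\}$ and $0\leq k_{0}\leq M-1$. Both $A_{j}=A_{\ell_{0}+(k_{0}+kM)q}$ and $A_{\ell+p}=A_{\ell_{0}+(k_{0}+M)q}$ then have index lying in the stabilization range of the residue class $\ell_{0}$, so each equals the stationary value of that class; hence $A_{j}=A_{\ell+p}=B_{\ell}+C_{\ell}$, completing the decomposition $(A_{j})_{j=1}^{\infty}=(B_{\ell})\overline{(B_{\ell}+C_{\ell})_{\ell=1}^{p}}$.

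I do not anticipate a serious obstacle; the argument is a direct set-valued translation of Lemma~\ref{lem:monoSEP}. The step meriting attention is the passage from cardinality stabilization to set stabilization, which depends essentially on finiteness of the $A_{j}$, together with the mild observation that nothing in the SEP definition for sequences of sets requires $0\in C_{\ell}$ or any other containment condition on the $C_{\ell}$, so defining them as iterated Minkowski sums of the given $U_{\ell+iq}$ is legitimate.
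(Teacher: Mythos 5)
Your proposal is correct and follows essentially the same route as the paper: both arguments first use the nesting $A_{j}\subset A_{j+q}$ together with the uniform cardinality bound to stabilize each residue class modulo $q$, then take the period to be $q$ times the stabilization index and build the sets $C_{\ell}$ as iterated Minkowski sums of the $U_{j}$ along each arithmetic progression. The only difference is cosmetic indexing of which $U$'s are accumulated (the paper sums the tail up to the stabilization point, you sum a fixed block of $M$ terms), and both choices produce the same stable value $B_{\ell}+C_{\ell}=A_{\ell+p}$.
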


\begin{proof}
The uniform bound on the cardinality of $A_{j}$ for all $j$ together with the assumption $A_{j} \subset A_{j+q}$ implies that the existence of a postive integer $p$, such that $A_{j + q\ell} = A_{j + q(\ell + k)}$ for all $k\geq0$ and $j = 1, 2, \ldots, q$. It follows that for $j = 1, 2, \ldots, q$, 
\begin{align}
A_{j + qp} &= A_{j+ q(p-1)} +U_{j + q(p-1)} \\
&= A_{j + q(p-2)} + U_{j + q(p-1)} + U_{j+q(p-2)} \\
&\;\;\vdots \\
&= A_{j} + \sum_{\ell = 1}^{p} U_{j + q(p-\ell)}. 
\end{align}
By choosing $V_{j + qk} = \sum_{\ell = 1}^{p-k}U_{j + q(p-\ell)}$ for $j = 1, 2, \ldots, q$ and $k = 0, 1, \ldots, p - 1$, we obtain $(A_{j})_{j\geq1} = (A_{1}A_{2}\ldots A_{qp})\overline{(A_{1} + V_{1})(A_{2}+V_{2})\ldots (A_{qp}+V_{qp})}$. 
\end{proof}

\begin{theorem}\label{thm:selfSimSEPHom} 
Fix an integer $n\geq2$ and suppose $D\subset\{0, 1, \ldots, n^{2}\}$ is such that $\Delta := D - D$ is sparse. Let $\alpha = \pi(\alpha_{j})_{j=1}^{\infty}$ where $\alpha_{j}\in\Delta$. The set $C(\alpha) := C_{n, D} \cap (C_{n, D} + \alpha)$ is self similar and is the attractor of an IFS of the form $\{f_{i}(x) = b^{-p}x + u_{i}\}$ where $p$ is a positive integer and each $u_{i}$ is a complex number if and only if the sequence of sets $((D\cap (D + \alpha_{j})) - \beta_{j})_{j=1}^{\infty}$ is SEP for some $\beta = \pi(\beta_{j})_{j=1}^{\infty} \in C(\alpha)$. 

Moreover, since $((D\cap (D + \alpha_{j})) - \beta_{j})_{j=1}^{\infty}$ is SEP and by definition can be written as $(A_{\ell})_{\ell=1}^{p}\overline{(A_{\ell} + U_{\ell})_{\ell=1}^{p}}$, the set $C(\alpha)$ is the attractor of the IFS containing all maps of the form 
\begin{equation*}
f(x) = b^{-p}(x + \sum_{\ell=1}^{p}(a_{\ell}b^{p-\ell} + u_{\ell}b^{-\ell})-\beta)+\beta
\end{equation*}
where $a_{\ell} \in A_{\ell}$ and $u_{\ell} \in U_{\ell}$ for each $\ell$. 
\end{theorem}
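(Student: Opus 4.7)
The plan is to generalize the argument of Theorem~\ref{thm:selfSimSEPSp}, replacing the two-point digit set with the shifted sets $A_j := (D \cap (D + \alpha_j)) - \beta_j$. Sparsity of $\Delta$ ensures, via Lemma~\ref{lem:extsep}, that every element of $\mathcal{E}_n$ has a unique radix expansion in base $(b,\Delta)$, so Lemma~\ref{lem:seqExpress} gives $C(\alpha) = \pi(\prod_{j=1}^{\infty} A'_j)$ with $A'_j := D \cap (D + \alpha_j)$. Because $\beta \in C(\alpha)$, uniqueness forces $\beta_j \in A'_j$, so $0 \in A_j$ and $C(\alpha) - \beta = \pi(\prod_{j=1}^{\infty} A_j)$.

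For the sufficiency direction, assume $(A_j)_{j \geq 1}$ is SEP with decomposition $(A_\ell)_{\ell=1}^{p}\,\overline{(A_\ell + U_\ell)_{\ell=1}^{p}}$. For any $z = \pi(z_j) \in C(\alpha) - \beta$, group the expansion into an initial block of length $p$ (with $z_\ell \in A_\ell$) followed by blocks of length $p$ in $\prod_{\ell=1}^p (A_\ell + U_\ell)$. For each $k \geq 1$ write $z_{kp+\ell} = y_{kp+\ell} + u_{kp+\ell}$ with $y_{kp+\ell} \in A_\ell$, $u_{kp+\ell} \in U_\ell$, and rename the initial digits $y_\ell$. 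Rearranging produces
\begin{equation*}
z = \sum_{k=1}^{\infty} b^{-kp}\sum_{\ell=1}^{p} \bigl(y_{(k-1)p+\ell}\,b^{p-\ell} + u_{kp+\ell}\,b^{-\ell}\bigr),
\end{equation*}
which exhibits $z$ as a point of the attractor of the IFS whose maps are $h(x) = b^{-p}\bigl(x + \sum_{\ell=1}^{p}(y_\ell b^{p-\ell} + u_\ell b^{-\ell})\bigr)$, one per choice of $(y_\ell, u_\ell) \in A_\ell \times U_\ell$. The reverse containment is routine. Translating by $+\beta$ via Lemma~\ref{lem:selfSimShift} produces the IFS with the stated form.

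For the necessity direction, suppose the IFS for $C(\alpha)$ contains $f(x) = b^{-p}x + u$, and let $\beta := u/(1 - b^{-p})$ be its fixed point; then $\beta \in C(\alpha)$ because an attractor contains every IFS map's fixed point. By Lemma~\ref{lem:selfSimShift}, $C(\alpha) - \beta$ is self-similar and its IFS contains $g(x) = b^{-p}x$. For any $z = \pi(z_j) \in C(\alpha) - \beta$, the point $g(z) = \pi(0, \ldots, 0, z_1, z_2, \ldots)$ lies in $C(\alpha) - \beta \subseteq \mathcal{E}_n$, and uniqueness of the radix expansion in $\Delta$ forces the shifted digit to match the admissible one, yielding $z_j \in A_{j+p}$ for every $j$; hence $A_j \subseteq A_{j+p}$. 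Finiteness of $\Delta$ makes the increasing chain stabilize in each residue class mod $p$. The main obstacle will be producing translation sets $U_j$ satisfying $A_j + U_j = A_{j+p}$, the hypothesis needed to invoke Lemma~\ref{lem:sepPP}. In the stable range $U_j = \{0\}$ suffices; on the transient part I expect to exploit the identity $A_{j+p} = A'_{j+p} - \beta_{j+p}$ together with the restrictions that sparsity of $\Delta$ places on which translates of $A'_j$ can sit inside $D$, to produce $U_j \subseteq \Delta$ realizing the Minkowski decomposition $A_{j+p} = A_j + U_j$. Lemma~\ref{lem:sepPP} then delivers the SEP conclusion.
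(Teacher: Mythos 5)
Your sufficiency direction is essentially the paper's argument (block the expansion into periods of length $p$, split each digit as $a+u$, rearrange, invoke Lemma~\ref{lem:selfSimShift}), and your use of Lemma~\ref{lem:extsep} and Lemma~\ref{lem:seqExpress} to write $C(\alpha)-\beta = \pi\bigl(\prod_{j} A_j\bigr)$ is exactly right. The problem is the necessity direction, where you have correctly identified the crux --- producing sets $U_j$ with $A_j + U_j = A_{j+p}$ so that Lemma~\ref{lem:sepPP} applies --- but have not actually supplied it. The inclusion $A_j \subseteq A_{j+p}$ you extract from the single map $g(x)=b^{-p}x$, together with stabilization, is genuinely insufficient: a Minkowski decomposition is a much stronger statement than containment. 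For instance $\{0,1\}\subseteq\{0,1,3\}$, yet no set $U$ satisfies $\{0,1\}+U=\{0,1,3\}$ (one needs $0\in U$ to produce $0$, and then $3\in U$ or $2\in U$ would force $4$ or $2$ into the sum). Nothing in your sketch rules such configurations out on the transient part, and the appeal to sparsity of $\Delta$ is left as a hope rather than an argument.

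The idea you are missing is that the sets $U_j$ must be read off from the \emph{other} maps of the IFS, not conjured from the combinatorics of $\Delta$. The paper assumes the IFS generating $C(\alpha)$ has the form $\{f_i(x)=b^{-p}x+v_i\}_{i=1}^{N}$, translates by $\beta = v_1/(1-b^{-p})$ to get $\{g_i(x)=b^{-p}x+u_i\}$ with $g_1(x)=b^{-p}x$, and observes that $0\in C(\alpha)-\beta$, hence each $u_i=g_i(0)$ lies in $C(\alpha)-\beta$ and so has a digit expansion $u_i=\sum_j u_{i,j}b^{-j}$ with $u_{i,j}\in A_j$. Setting $U_j:=\{u_{i,j+p}: i=1,\dots,N\}$, the containment $A_j+U_j\subseteq A_{j+p}$ follows by applying $g_i$ to a point of the attractor with prescribed $j$-th digit and matching digits via Lemma~\ref{lem:extsep}; the reverse containment follows because $\bigcup_i g_i(C(\alpha)-\beta)=C(\alpha)-\beta$, so a strict inclusion $A_j+U_j\subsetneq A_{j+p}$ would make the union of the images a proper subset of the attractor. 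Only then does Lemma~\ref{lem:sepPP} (whose hypothesis is precisely the existence of such $U_j$, not mere monotonicity) deliver the SEP conclusion. Note also that this argument uses the linear part $b^{-p}$ for \emph{every} map of the IFS, a point the paper itself flags in a remark; your reading of the hypothesis as constraining only one map makes the gap strictly harder to close.
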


We follow the proof strategy for theorem 1.2 in \cite{PP14}. 

\begin{proof} 
Suppose $\beta = \sum_{j=1}^{\infty}\beta_{j}b^{-j}\in C(\alpha)$ is such that $(D_{j})_{j=1}^{\infty} := ((D\cap(D+\alpha_{j})) - \beta_{j})_{j=1}^{\infty}$ is an SEP sequence of sets. By definition there exist sets $A_{1}, A_{2}, \ldots, A_{p}$ and $U_{1}, U_{2}, \ldots, U_{p}$ such that $(D_{j})_{j=1}^{\infty} = A_{1}\ldots A_{p}\overline{(A_{\ell}+U_{\ell})_{\ell = 1}^{p}}$. 

Since $\Delta$ is sparse it follows from Lemma~\ref{lem:extsep} that $\alpha$ has a unique radix expansion in base $(b, \Delta)$. Therefore, by Lemma~\ref{lem:seqExpress}, we have $C(\alpha) = \pi(\Pi_{j=1}^{\infty}D\cap(D + \alpha_{j}))$. It follows that $C(\alpha, \beta) := C(\alpha) - \beta$ is equal to  $\pi(\Pi_{j=1}^{\infty}(D\cap(D + \alpha_{j}))-\beta_{j})$. 

Suppose $z$ is an element of $C(\alpha, \beta)$. Therefore $z = \sum_{j=1}^{\infty}z_{j}b^{-j}$ with $z_{j}\in A_{j}$. By assumption, we can write
\begin{equation}
z = \sum_{j=1}^{p} a_{0, j}b^{-j} + \sum_{k=1}^{\infty}\sum_{j=1}^{p}(a_{k, j} + u_{k, j})b^{-(kp + j)}
\end{equation}
where $a_{k, j}\in A_{j}$ and $u_{k, j}\in U_{j}$. We can rearrange the terms to obtain 
\begin{equation}\label{eq:selfSimForm}
z =\sum_{k=0}^{\infty}\sum_{j=1}^{p}(a_{k, j}b^{p-j} + u_{k+1, j}b^{-j})b^{-(k+1)p}. 
\end{equation}
We now see that $z$ is an element of the attractor of the IFS $\{h(x) = b^{-p}(x + \sum_{\ell = 1}^{p}(a_{\ell}b^{p-\ell}+u_{\ell}b^{-\ell})) : a_{\ell}\in A_{\ell}, u_{\ell}\in U_{\ell}\}$. We leave the other inclusion to the reader. By Lemma~\ref{lem:selfSimShift}, $C(\alpha)$ is self-similar because $C(\alpha, \beta)$ is self-similar. 

Conversely, assume that $C(\alpha)$ is generated by the IFS of the form $\{f_{i}(x) = b^{-p}x + v_{i}\}_{i=1}^{N}$. Let $\beta = \frac{v_{1}}{1-b^{-p}}$. By Lemma~\ref{lem:selfSimShift}, $C(\alpha, \beta)$ is generated by the IFS $\{g_{i}(x) = b^{-p}x+u_{i}\}_{i=1}^{N}$ where $u_{i} = v_{i} - v_{1}$. Therefore $g_{1}(x) = b^{-p}x$. It follows that $C(\alpha, \beta)$ contains the origin. Therefore $\beta$ is an element of $C(\alpha)$ and has the expansion $\sum_{j=1}\beta_{j}b^{-j}$ where $\beta_{j}\in D\cap(D+\alpha_{j})$. The presence of the origin also means $g_{i}(0) = u_{i}$ is an element of $C(\alpha, \beta)$ for $i = 1, 2, \ldots, N$. It follows that each $u_{i}$ can be expanded into $\sum_{j=1}^{\infty}u_{i, j}b^{-j}$ where $u_{i, j}\in D_{j} := (D\cap(D+\alpha_{j}))-\beta_{j}$. 

Define $U_{j} := \{u_{i, j+p}: i = 1, 2, \ldots, N\}$. We now argue that $D_{j} + U_{j} = D_{j + p}$ for all $j\geq1$. 

Let $d\in D_{k} + U_{k}$. There exists $z_{k}\in D_{k}$ and $u_{i, k+p}\in U_{k}$, for some $i$, such that $d = z_{k} + u_{i, k+p}$. Choose $z\in C(\alpha, \beta)$ such that its $k$th digit is $z_{k}$. Then $g_{i}(z) = \sum_{j=1}^{p}u_{i, j}b^{-j} + \sum_{j=1}^{\infty}(z_{j} + u_{i, j+p})b^{-(p+j)}$ is an element of $C(\alpha, \beta)$. Therefore it also has an expansion $\sum_{j=1}^{\infty}d_{j}b^{-j}$ where $d_{j}\in D_{j}$ for each $j$. It follows that
\begin{equation}
\sum_{j=1}^{\infty}z_{j}b^{-(p+j)} = \sum_{j=1}^{\infty}(d_{j} - u_{i, j})b^{-j}. 
\end{equation}
We deduce that $z_{k} = d_{k+p} - u_{i, k + p}$ by applying Lemma~\ref{lem:extsep}. Therefore $d = d_{k+p}\in D_{k+p}$. Observe now that if $D_{k} + U_{k} \subsetneq D_{k+p}$, then 
\begin{align}
\cup_{i=1}^{N}g_{i}(C(\alpha, \beta)) &\subset \pi(D_{1}\times\cdots\times D_{k+p-1} \times (D_{k} + U_{k}) \times \Pi_{j=1}^{\infty}D_{k + p + j}) \\
&\subsetneq \pi(\Pi_{j=1}^{\infty}D_{j}) = C(\alpha, \beta). 
\end{align}
This is a contradiction. Therefore $D_{j} + U_{j} = D_{j+p}$ for all $j\geq1$. 

Since $g_{1}$ is among the contractions generating $C(\alpha, \beta)$, it follows that $D_{j} \subset D_{j+p}$ for all $j\geq1$. We also recall that $D_{j} \subset \{-n^{2}, -n^{2}+1, \ldots, n^{2}-1, n^{2}\}$ for all $j\geq1$. By Lemma~\ref{lem:sepPP}, it follows that $(D_{j})_{j=1}^{\infty}$ is SEP. 
\end{proof}

\begin{remark}
The assumption that $b^{-p}$ is the linear factor of one of the contractions, while sufficient, may be unnecessary. 
\end{remark}

Let us now recall the significance of self-similarity by recalling Theorem~\ref{thm:strongSimDim}. A finite collection of similarities $\{f_{i}\}$ satisfies the strong separation condition (see Definition \ref{def:strongSep}), then the attractor of the IFS has box-counting dimension, Hausdorff dimension, and packing dimension equal to the unique $s$ that satisfies $\sum_{i=1}^{N}r_{i}^{s} = 1$ where each $r_{i}$ is the contraction factor of the similarity $f_{i}$. We can use this fact from fractal geometry to find the fractal dimension of $C_{n, D} \cap (C_{n, D} + \alpha)$ for a suitable pair $n\geq2$ and $D\subset\{0, 1, \ldots, n^{2}\}$, and choice of $\alpha$. We present two results, one aligned with Theorem~\ref{thm:selfSimSEPSp} and another with Theorem~\ref{thm:selfSimSEPHom}. 

\begin{corollary}~\label{cor:dimSp}
Fix an integer $n\geq 2$ and let $b:= -n+i$. Suppose $D = \{0, m\}$ where $2 \leq m \leq n^{2}$ and that $\alpha\in E_{n, D}$ is chosen such that $\alpha$ has a unique radix expansion in base $(b, \{0, \pm m\})$. Let the finite collections of integers $\{a_{\ell}\}_{\ell=1}^{p}$ and $\{u_{\ell}\}_{\ell=1}^{p}$ be such that $(m - |\alpha_{j}|)_{j=1}^{\infty} = (a_{\ell})_{\ell=1}^{p}\overline{(a_{\ell} + u_{\ell})_{\ell=1}^{p}}$. Then the following equation holds. 
\begin{equation}
\dim_{B}C(\alpha) = \dim_{H}C(\alpha) = \dim_{P}C(\alpha) =  \frac{\log{2}\sum_{\ell=1}^{p}(a_{\ell}+u_{\ell})}{mp\log{|b|}}
\end{equation}
where $C(\alpha) := C_{n, D} \cap (C_{n, D} + \alpha)$. 
\end{corollary}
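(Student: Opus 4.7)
The plan is to combine Theorem~\ref{thm:selfSimSEPSp} with the Moran set framework from the proof of Theorem~\ref{thm:extDense}, using the fundamental tile $T_n$ as the ambient set to bypass any separation condition on $\Delta$. By Theorem~\ref{thm:selfSimSEPSp}, $C(\alpha)$ is the attractor of an IFS $\mathcal{F}$ whose maps are similarities with common contraction ratio $|b|^{-p}$, indexed by tuples $(y_1,\ldots,y_p,z_1,\ldots,z_p)$ with $y_\ell, z_\ell \in \{0, m\}$, $y_\ell \leq a_\ell$, $z_\ell \leq u_\ell$. Since $a_\ell, u_\ell, a_\ell+u_\ell$ all lie in $\{0, m\}$, the number of valid pairs $(y_\ell, z_\ell)$ equals $1 + (a_\ell+u_\ell)/m = 2^{(a_\ell+u_\ell)/m}$, so $|\mathcal{F}| = 2^{\sum_{\ell=1}^{p}(a_\ell+u_\ell)/m}$. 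The similarity dimension of $\mathcal{F}$ is therefore
\begin{equation*}
s := \frac{\log|\mathcal{F}|}{p\log|b|} = \frac{\log 2 \sum_{\ell=1}^{p}(a_\ell+u_\ell)}{mp\log|b|},
\end{equation*}
the value claimed by the corollary. The general upper bound $\overline{\dim}_B A \leq s$ for the attractor $A$ of a similarity IFS (which holds without any separation hypothesis, see \cite{F90}) gives $\overline{\dim}_B C(\alpha) \leq s$.

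For a matching lower bound on $\dim_H C(\alpha)$, I install a Moran structure on $C(\alpha)$ along the lines of the proof of Theorem~\ref{thm:extDense}. Because $\alpha$ has a unique radix expansion, Lemma~\ref{lem:seqExpress} gives $C(\alpha) = \pi\bigl(\prod_{j=1}^{\infty} D\cap(D+\alpha_j)\bigr)$. Take $J = T_n$, $n_k = |D\cap(D+\alpha_k)|$, all contraction ratios equal to $|b|^{-1}$, and level-$k$ cylinders $J_{(d_1,\ldots,d_k)} = (f_{d_1}\circ\cdots\circ f_{d_k})(T_n)$ with $d_j\in D\cap(D+\alpha_j)$. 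The crucial disjoint-interior requirement reduces to the fact that distinct $k$-tiles of $T_n$ have disjoint interiors, which follows from the OSC satisfied by the IFS defining $T_n$; this is standard, as $T_n$ tiles the plane under Gaussian integer translates (\cite{KS74}, \cite{G82}). Theorem~\ref{thm:moranDim} then yields $\dim_H C(\alpha) = \liminf_{k\to\infty} s_k$ and $\dim_P C(\alpha) = \limsup_{k\to\infty} s_k$, where $s_k := \log M_k(\alpha)/(k\log|b|)$.

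Finally, I compute $\lim_k s_k$ directly from the SEP structure. Since $|D\cap(D+\alpha_j)|$ equals $1$ when $\alpha_j\neq 0$ and $2$ when $\alpha_j = 0$, we have $|D\cap(D+\alpha_j)| = 2^{(m-|\alpha_j|)/m}$, so $\log M_k(\alpha) = (\log 2)\sum_{j=1}^{k}(m-|\alpha_j|)/m$. Writing $k = Np+r$ with $0\leq r < p$ and using $(m-|\alpha_j|)_{j\geq 1} = (a_\ell)_{\ell=1}^{p}\overline{(a_\ell+u_\ell)_{\ell=1}^{p}}$, this sum equals
\begin{equation*}
\sum_{\ell=1}^{p} \frac{a_\ell}{m} + (N-1)\sum_{\ell=1}^{p}\frac{a_\ell+u_\ell}{m} + \sum_{\ell=1}^{r}\frac{a_\ell+u_\ell}{m},
\end{equation*}
which after dividing by $k\log|b|$ and sending $k\to\infty$ converges to $\log 2 \sum_\ell(a_\ell+u_\ell)/(mp\log|b|) = s$. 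Hence $\dim_H C(\alpha) = \dim_P C(\alpha) = s$; combining with the upper bound from the first paragraph and the inequalities (\ref{eq:dimRel}) forces $\underline{\dim}_B C(\alpha) = \overline{\dim}_B C(\alpha) = s$ as well, giving all three equalities. The main delicacy I anticipate is the disjoint-interior condition in the Moran step, which Theorem~\ref{thm:extDense} handled via a separation assumption on $\Delta$; here the remedy is to use $T_n$ rather than $C_{n,D}$ as the ambient set, off-loading the requirement onto the (standard) OSC of the $T_n$ IFS.
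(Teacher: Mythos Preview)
Your proof is correct but follows a genuinely different route from the paper's. The paper works directly with the explicit IFS $\mathcal{F}$ supplied by Theorem~\ref{thm:selfSimSEPSp}: it takes two maps $h,g\in\mathcal{F}$, expands $h(x_1)=g(x_2)$ as base-$(b,D)$ radix representations, and uses Lemma~\ref{lem:sepOne} (valid because $m\geq 2$) together with a short case check (ruling out $a_\ell=u_\ell=m$) to force $h=g$. This establishes the SSC for $\mathcal{F}$, after which Theorem~\ref{thm:strongSimDim} gives all three dimensions at once. Your argument instead decouples the two directions: the upper bound comes from the trivial similarity-dimension estimate $\overline{\dim}_B\leq s$ (no separation needed), while the lower bound is obtained by recasting $C(\alpha)$ as a Moran set inside the \emph{fundamental tile} $T_n$ rather than $C_{n,D}$, so that the disjoint-interior condition is inherited from the standard OSC for $T_n$ instead of any hypothesis on $\Delta$. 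The explicit limit computation for $s_k$ then pins down $\dim_H=\dim_P=s$, and the inequalities~(\ref{eq:dimRel}) close the loop for box dimension. Your approach sidesteps the SSC verification entirely and makes transparent that the hypothesis $m\geq 2$ is only needed through Theorem~\ref{thm:selfSimSEPSp} (and implicitly through uniqueness of the radix expansion), whereas the paper's route uses $m\geq 2$ a second time via Lemma~\ref{lem:sepOne} to obtain separation. The trade-off is that the paper's argument yields the stronger structural fact that $\mathcal{F}$ satisfies the SSC, while yours is more modular and requires citing the OSC for $T_n$ as an external input.
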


\begin{proof} 
Let $\gamma$ denote the minimal element of $C(\alpha)$. By Theorem~\ref{thm:selfSimSEPSp}, $C(\alpha)$ is self-similar and in particular, $C(\alpha, \gamma)$ is the attractor of the collection of similarities of the form $f(x) = b^{-p}(x + \sum_{\ell=1}^{p}(y_{\ell}b^{p-\ell} + z_{\ell}b^{-\ell}))$ where $y, u_{\ell} \in \{0, m\}$ such that $y_{\ell} \leq a_{\ell}$ and $z_{\ell} \leq u_{\ell}$ for each $\ell$. 

Suppose $h$ and $g$ are functions of that form and suppose there exist $x_{1}$, $x_{2}\in C(\alpha, \gamma)$ such that $h(x_{1}) = g(x_{2})$. Let us denote parameters of $h$ and $g$ by $y_{\ell}^{(h)}, z_{\ell}^{(h)}$ and $y_{\ell}^{(g)}, z_{\ell}^{(g)}$ respectively. By Lemma~\ref{lem:shiftForm}, $x_{k} = \pi(x_{j}^{(k)})_{j=1}^{\infty}$ for $k = 1, 2$ with $x_{j}^{(k)}\in \{0, m\}$ such that $x_{j}^{(k)} \leq m - |\alpha_{j}|$ for each $j$.  It follows that 
\begin{align}
h(x_{1}) &= 0._{b}y_{1}^{(h)}\ldots y_{p}^{(h)}(z_{1}^{(h)} + x_{1}^{(1)})\ldots (z_{p}^{(h)} + x_{p}^{(1)})x_{p+1}^{(1)}x_{p+2}^{(1)}\ldots \label{eq:hy}\\
g(x_{2}) &= 0._{b}y_{1}^{(g)}\ldots y_{p}^{(g)}(z_{1}^{(g)} + x_{1}^{(2)})\ldots (z_{p}^{(g)} + x_{p}^{(2)})x_{p+1}^{(2)}x_{p+2}^{(2)}\ldots \label{eq:gy}
\end{align}
where we recall that for $k = 1, 2$, $x_{\ell}^{(k)} \leq a_{\ell}$ for $\ell = 1, 2, \ldots, p$ and $x_{\ell}^{(k)} \leq a_{\ell} + u_{\ell}$ for $\ell > p$. It follows that both (\ref{eq:hy}) and (\ref{eq:gy}) are radix expansions in base $(b, D)$. Since $m \geq 2$, it follows from Lemma~\ref{lem:sepOne} that $y_{\ell}^{(h)} = y_{\ell}^{(g)}$ and $z_{\ell}^{(h)} + x_{\ell}^{(1)} = z_{\ell}^{(g)} + x_{\ell}^{(2)}$ for each $\ell = 1, 2, \ldots, p$. Suppose for some $\ell = 1, 2, \ldots, p$, we have $z_{\ell}^{(h)} \neq z_{\ell}^{(g)}$. Without loss of generality, suppose $z_{\ell}^{(h)} = m$. It follows that $x_{\ell}^{(2)} = m$. Therefore $u_{\ell} = a_{\ell} = m$. This is contradiction because $a_{\ell} + u_{\ell}  = m - |\alpha_{\ell + p}|$, which can never be $2m$. We conclude that $z_{\ell}^{(h)} = z_{\ell}^{(g)}$. We conclude that $h = g$. If $h \neq g$, then $h(C(\alpha, \gamma))$ and $g(C(\alpha, \gamma))$ are disjoint. 

By Theorem~\ref{thm:strongSimDim}, the box-counting, Hausdorff, and packing dimensions of $C(\alpha, \beta)$, and thus $C(\alpha)$ are both equal to the value $s$ satisfying $\Pi_{\ell=1}^{p}2^{(a_{\ell}+u_{\ell})/m} = |b|^{sp}$.
\end{proof}

\begin{corollary}\label{cor:dimHom} 
Fix an integer $n\geq2$ and suppose $D\subset\{0, 1, \ldots, n^{2}\}$ is such that $\Delta := D - D$ is sparse. Let $\alpha = \pi(\alpha_{j})_{j=1}^{\infty}$ where $\alpha_{j}\in\Delta$. 

Let the finite collections of sets $\{A_{\ell}\}_{\ell=1}^{p}$ and $\{U_{\ell}\}_{\ell=1}^{p}$ be such that $((D\cap(D+\alpha_{j}) - \beta_{j})_{j=1}^{\infty} = (A_{\ell})_{\ell=1}^{p}\overline{(A_{\ell} + U_{\ell})_{\ell=1}^{p}}$ for some sequence of $\beta_{j} \in D \cap (D +\alpha_{j})$. If $|A_{\ell} + U_{\ell}| = |A_{\ell}||U_{\ell}|$ for each $\ell$, then 
\begin{equation}
\dim_{B}C(\alpha) = \dim_{H}C(\alpha) = \dim_{P}C(\alpha) = \frac{\sum_{\ell=1}^{p}(\log{|A_{\ell}||B_{\ell}|})}{p\log{|b|}}
\end{equation}
where $C(\alpha) := C_{n, D} \cap (C_{n, D} + \alpha)$. 
\end{corollary}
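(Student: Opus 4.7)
The plan is to mirror the proof of Corollary~\ref{cor:dimSp}: use Theorem~\ref{thm:selfSimSEPHom} to present $C(\alpha)$ explicitly as the attractor of an IFS, verify the strong separation condition for that IFS, and then apply Theorem~\ref{thm:strongSimDim}. By Theorem~\ref{thm:selfSimSEPHom}, $C(\alpha)$ is the attractor of the IFS consisting of maps
\begin{equation*}
f(x) = b^{-p}\Bigl(x + \sum_{\ell=1}^p\bigl(a_\ell b^{p-\ell} + u_\ell b^{-\ell}\bigr) - \beta\Bigr) + \beta
\end{equation*}
indexed by $(a_\ell) \in \prod_\ell A_\ell$ and $(u_\ell) \in \prod_\ell U_\ell$. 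Each such map has contraction ratio $|b|^{-p}$, and because $|A_\ell + U_\ell| = |A_\ell||U_\ell|$ guarantees that $a_\ell + u_\ell$ uniquely recovers $(a_\ell, u_\ell)$, distinct parameter tuples yield distinct maps; so the IFS has exactly $N := \prod_{\ell=1}^p |A_\ell||U_\ell|$ contractions.

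By Lemma~\ref{lem:selfSimShift}, after translating by $\beta$, the set $C(\alpha, \beta) := C(\alpha) - \beta$ is the attractor of the simpler IFS whose maps are $\tilde f(x) = b^{-p}x + \sum_{\ell=1}^p a_\ell b^{-\ell} + \sum_{\ell=1}^p u_\ell b^{-(p+\ell)}$. From Lemma~\ref{lem:seqExpress}, using the sparseness-enforced uniqueness of the radix expansion of $\alpha$, elements of $C(\alpha, \beta)$ are exactly the numbers $y = \pi(y_j)$ with $y_j \in D_j := (D\cap(D+\alpha_j)) - \beta_j$. A direct computation then gives
\begin{equation*}
\tilde f(y) = 0._b\, a_1\cdots a_p\,(u_1+y_1)\cdots(u_p+y_p)\,y_{p+1}\,y_{p+2}\,\cdots,
\end{equation*}
and every digit lies in $\Delta = D-D$: indeed $a_\ell \in A_\ell \subset \Delta$, $u_\ell + y_\ell \in U_\ell + A_\ell = D_{\ell+p} \subset \Delta$, and $y_{p+j} \in D_{p+j} \subset \Delta$.

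The main obstacle is verifying the SSC. Suppose two distinct contractions $\tilde f$ and $\tilde g$, with parameters $(a_\ell^{(f)}, u_\ell^{(f)})$ and $(a_\ell^{(g)}, u_\ell^{(g)})$, satisfy $\tilde f(y) = \tilde g(y')$ for some $y, y' \in C(\alpha, \beta)$. Both sides are radix expansions in base $(b, \Delta)$, and by sparseness of $\Delta$ together with the argument underlying Lemma~\ref{lem:extsep}---which shows that two distinct base-$(b, \Delta)$ expansions cannot agree, since their first differing digit would yield a neighbour of $\mathcal{E}_n$ of magnitude greater than $2$ (resp.\ $3$), contradicting Corollary~\ref{cor:realNeighbours}---the two expansions must agree slot-by-slot. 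This forces $a_\ell^{(f)} = a_\ell^{(g)}$ and $u_\ell^{(f)} + y_\ell = u_\ell^{(g)} + y_\ell'$ for every $\ell$, and then the hypothesis $|A_\ell + U_\ell| = |A_\ell||U_\ell|$ forces $u_\ell^{(f)} = u_\ell^{(g)}$ and $y_\ell = y_\ell'$, contradicting $\tilde f \neq \tilde g$. With the SSC in hand, Theorem~\ref{thm:strongSimDim} gives $N\,|b|^{-ps} = 1$, whose unique positive solution $s = (\sum_{\ell=1}^p \log(|A_\ell||U_\ell|))/(p\log|b|)$ is the claimed common value of the three dimensions of $C(\alpha, \beta)$, and hence of $C(\alpha)$.
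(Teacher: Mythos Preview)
Your proof is correct and follows essentially the same route as the paper's: invoke Theorem~\ref{thm:selfSimSEPHom} to exhibit $C(\alpha,\beta)$ as the attractor of an explicit IFS, expand the images $\tilde f(y)$ and $\tilde g(y')$ as base-$(b,\Delta)$ radix expansions, use Lemma~\ref{lem:extsep} (sparseness) to force digit-by-digit agreement, and then use the hypothesis $|A_\ell+U_\ell|=|A_\ell||U_\ell|$ to separate $u_\ell$ from $y_\ell$ and conclude the SSC before applying Theorem~\ref{thm:strongSimDim}. Your write-up is slightly more explicit than the paper's in checking that each digit of $\tilde f(y)$ actually lies in $\Delta$ and in counting the contractions as $N=\prod_\ell |A_\ell||U_\ell|$, but the argument is the same.
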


We know that the sets $A_{\ell}$ and $U_{\ell}$ exist by the definition of SEP. We remark that the condition $|A_{\ell} + U_{\ell}| = |A_{\ell}||U_{\ell}|$ is equivalent to the condition that every element of $A_{\ell} + U_{\ell}$ has a unique decomposition of the form $a_{\ell} + u_{\ell}$ where $a_{\ell}\in A_{\ell}$ and $u_{\ell}\in U_{\ell}$. 

\begin{proof}
By Theorem~\ref{thm:selfSimSEPHom}, $C(\alpha)$ is self-similar and in particular, $C(\alpha, \beta) := C(\alpha) - \beta$ is the attractor of the collection of similarities of the form 
$f(x) = b^{-p}(x + \sum_{\ell=1}^{p}(a_{\ell}b^{p-\ell} + u_{\ell}b^{-\ell}))$ where $a_{\ell}$ and $u_{\ell}$ can be any of the elements in $A_{\ell}$ and $U_{\ell}$ respectively. 

Suppose $h$ and $g$ are functions of that form and suppose there exist $x_{1}$, $x_{2}\in C(\alpha, \beta)$ such that $h(x_{1}) = g(x_{2})$.  Let us denote the parameters of $h$ and $g$ by $a_{\ell}^{(h)}, u_{\ell}^{(h)}$ and $a_{\ell}^{(g)}, u_{\ell}^{(g)}$ respectively. By  $x_{k} = \pi(x_{j}^{(k)})_{j=1}^{\infty}$ for $k=1, 2$ where $x_{j}^{(k)}\in (D\cap(D+\alpha_{j})) - \beta_{j}$. It follows that 
\begin{align}
h(x_{1}) &= 0._{b}a_{1}^{(h)}\ldots a_{p}^{(h)}(u_{1}^{(h)} + x_{1}^{(1)})\ldots (u_{p}^{(h)} + x_{p}^{(1)})x_{p+1}^{(1)}x_{p+2}^{(1)}\ldots \label{eq:ha} \\
g(x_{2}) &= 0._{b}a_{1}^{(g)}\ldots a_{p}^{(g)}(u_{1}^{(g)} + x_{1}^{(2)})\ldots (u_{p}^{(g)} + x_{p}^{(2)})x_{p+1}^{(2)}x_{p+2}^{(2)}\ldots \label{eq:ga}
\end{align}
where we recall that for $k=1, 2$, $x_{\ell}^{(k)}$ is an element of $A_{\ell}$ for $\ell = 1, 2, \ldots, p$ and of $A_{\ell} + U_{\ell}$ for $\ell > p$. It follows that both (\ref{eq:ha}) and (\ref{eq:ga}) are radix expansions in base $(b, \Delta)$. Since $\Delta$ is sparse, it follows from Lemma~\ref{lem:extsep} that $a_{\ell}^{(h)} = a_{\ell}^{(g)}$ and $u_{\ell}^{(h)} + x_{\ell}^{(1)} = u_{\ell}^{(g)} + x_{\ell}^{(2)}$ for each $\ell = 1, 2, \ldots, p$. The assumption $|A_{\ell} + U_{\ell}| = |A_{\ell}||U_{\ell}|$ implies that $x_{\ell}^{(1)} = x_{\ell}^{(2)}$ and, in particular, $u_{\ell}^{(h)} = u_{\ell}^{(g)}$. We conclude that $h = g$. If $h \neq g$, then $h(C(\alpha, \beta))$ and $g(C(\alpha, \beta))$ are disjoint. 

By Theorem~\ref{thm:strongSimDim}, the box-counting, Hausdorff, and packing dimensions of $C(\alpha, \beta)$, and thus $C(\alpha)$ are both equal to the value $s$ satisfying $\Pi_{\ell=1}^{p}|A_{\ell}||U_{\ell}| = |b|^{sp}$.
\end{proof}

We end by computing the dimension of the set $C_{n, D} \cap (C_{n, D} + \alpha)$ given in Example~\ref{ex:boxEx} a second time. 

\begin{example} 
Let $b=-3+i$. Suppose we choose $D = \{0, 4\}$. Therefore we have $\Delta = \{-4, 0, 4\}$. The intersection of $C$ and its translation by $\alpha = \frac{-28+24i}{-19+26i} = 0.\overline{-404}$ is nonempty. We compute its box-counting, Hausdorff, and packing dimensions. 

Not only is $m = 4 \geq 2$, but by Lemma~\ref{lem:extsep}, the radix expansion of $\alpha$ in base $(b, \Delta)$ is unique. Therefore we can apply Corollary~\ref{cor:dimSp}. The sequence $(4 - |\alpha_{j}|)_{j=1}^{\infty}$ is equal to $\overline{040}$. Matching the notation of Corollary~\ref{cor:dimSp}, we have $p = 3$, $a_{1} = a_{3} = 0$, $a_{2} = 4$ and $u_{1} = u_{2} = u_{3} = 0$. Therefore

\begin{equation}
\dim_{B}(C(\alpha)) = \frac{4\log{2}}{12\log{10}} = \frac{\log{2}}{3\log{10}}. 
\end{equation}
\end{example}

\section*{\textbf{Acknowledgement}}
The author would like to thank Professor Derong Kong at Chongqing University for his observations that improved the strength of some of the results and the quality of the manuscript. The author is also grateful for the continued support of his doctoral advisors, Professor Francesco Cellarosi and Professor James Mingo, whose comments improved the readability of the manuscript.

\nocite{*}
\renewcommand{\bibname}{References}

\bibliographystyle{plain} 
\clearpage
\bibliography{ReferencesV3}

\end{document}